\documentclass [11pt]{amsart}
\usepackage{amsmath}
\usepackage{amssymb}
\usepackage{amscd}
\usepackage{epsfig}
\usepackage{amsxtra}
\usepackage{pifont}
\usepackage{mathabx}
\usepackage{MnSymbol}
\usepackage{accents}
\usepackage{scalerel,stackengine}
\usepackage{tikz}
\usepackage{caption}
\usepackage{graphicx}
\graphicspath{{scans/}} 
\usepackage{pdfpages}
\usepackage{rotating}
\usepackage{adjustbox}
\usepackage{longtable}
\usetikzlibrary{shapes.geometric}
\usetikzlibrary{calc}
\usetikzlibrary{positioning}
\usepackage{hyperref}

\numberwithin{equation}{section}

\newcommand{\RR}{{\mathbb R}}

\newcommand{\ZZ}{{\mathbb Z}}

\newcommand{\NN}{\mathbb N}
\newcommand{\cT}{{\mathcal T}}

 \newtheorem{theorem}{Theorem}[section]
 \newtheorem{lemma}[theorem]{Lemma}
 \newtheorem{proposition}[theorem]{Proposition}

 \newtheorem{conj}[theorem]{Conjecture}
 \newtheorem{quest}[theorem]{Question}
  
 \newtheorem{definition}[theorem]{Definition}
 \newtheorem{example}[theorem]{Example}
  \newtheorem{remark}[theorem]{Remark}

\begin{document}

\title{The 1-2-3 conjecture for polygonal tilings}

\author[A. Charlesworth]{Alison Charlesworth}
\address{Department of Mathematics and Statistics, MacEwan University,  Edmonton, Alberta, Canada}
\email{charleswortha@mymacewan.ca}

\author[C. Ramsey]{Christopher Ramsey}
\address{Department of Mathematics and Statistics, MacEwan University,  Edmonton, Alberta, Canada}
\email{ramseyc5@macewan.ca}
\urladdr{https://sites.google.com/macewan.ca/chrisramsey/}

\author[N. Strungaru]{Nicolae Strungaru}
\address{Department of Mathematics and Statistics, MacEwan University, Edmonton, Alberta, Canada,
and
\newline \hspace*{\parindent}
Institute of Mathematics ``Simon Stoilow'', 
Bucharest, Romania}
\email{strungarun@macewan.ca}
\urladdr{https://sites.google.com/macewan.ca/nicolae-strungaru/home}

\begin{abstract} The 1-2-3 conjecture has been solved positively in 2024 for finite graphs and by extension for infinite graphs which are locally finite. The solution is non-constructive, and finding explicit solutions for large (or infinite) graphs is very hard. By exploiting the extra structure present in many non-periodic tilings, we find explicit solutions for the Chair (all three vertex placements), Non-Pinwheel, Pinwheel, Half-hex, Ammann-Beenker (two versions), Penrose Rhomb, and the Domino tilings. We prove that for any fully periodic tiling of the plane there exists a fully periodic solution, and provide an algorithm for finding such a solution. We give solutions for the fully periodic square, triangle and hexagonal lattices.  
\end{abstract}

\subjclass[2020]{05C15 
, 52C20 
, 52C23 
, 05C10 
}
\keywords{1-2-3 problem, substitution tiling, non-periodic, vertex colouring}

\maketitle

\section{Introduction}

Let $G = (V, E)$ be a simple, undirected, locally finite graph. A $k$-edge labelling or weighting is a function $l: E\rightarrow \{1,\dots, k\}$ and the weighted degree of a vertex is 
\[
s_l(u) = \sum_{uv\in E} l(uv)\,.
\]
If for every $uv\in E$ we have $s_l(u)\neq s_l(v)$, then this edge labelling is called a vertex colouring. Karo\'{n}ski, \L{}uczak and Thomason conjectured in 2004 \cite{KLT} that every connected, finite, simple graph with at least two edges has a 3-edge labelling that produces a vertex colouring. This has been popularly called the {\bf 1-2-3 Conjecture}. This seemed a reasonable conjecture as they could prove in \cite{KLT} that it is true for all 3-colourable graphs and it is readily verified in any given small-enough example. 

There has been a lot of work over the past two decades around this and related conjectures. One approach was to find $k$ that worked for all graphs, with this version of the conjecture proven for, successively better, $k=30$ \cite{ADMRT}, $k=16$ \cite{ADR}, $k=13$ \cite{WY}, $k=6$ \cite{KKP1},  $k=5$ \cite{KKP2}, and $k=4$ \cite{Keusch0}. Finally, in 2024, Keusch \cite{Keusch} proved the 1-2-3 Conjecture to be true.

A triangle necessitates the use of $k=3$ to get a vertex-colouring, so a natural question is to describe the graphs which have vertex-colouring 2-edge labellings. This has been shown to be possible for all $d$-regular bipartite graphs for $d\geq 3$ \cite{CLWY}. A vertex-colouring 1-edge labelling is possible if and only if adjacent vertices have differing degrees.

The 1-2-3 Conjecture for infinite, locally finite graphs has been less studied as the finite version was difficult enough. Of course, the existence of a solution to the problem for finite graphs is enough to prove the solution for infinite graphs by way of a Zorn's Lemma argument \cite{Stawiski}. The proof in \cite{Keusch}, and by extension in \cite{Stawiski} is non-constructive, and in general finding explicit weights for large graphs is very complicated. While we do not know the complexity of this problem, it is well known that many similar graph colouring problems are NP complete. The problem becomes infinitely harder on infinite graphs, where in general one should not expect to find explicit solutions. An existential result is nice to know but it would be even nicer to have a constructive algorithm, which generally requires some structure.

A nicely structured infinite graph arises naturally as the edges and vertices of a polygonal tiling of the plane. Even these can be too complicated to come up with reasonable algorithms so one at least needs locally finite tilings. A special class of these are substitution tilings, all of which have finite local complexity (FLC) which is stronger than locally finite. See \cite{TAO} for a rigorous background on tilings and the Tilings Encyclopedia \cite{TilingEncyclopedia} for many beautiful non-periodic examples.

For many tilings of the plane with finitely many prototiles, there are extra properties which one can use to try to construct solutions to various colouring problems:
\begin{itemize}
\item{} There are usually finitely many local configurations, and it is often possible to solve this type of problem by assigning a weight/colour based on the local configuration around the edge/point.
\item{} For substitution tilings it is sometimes possible to solve the problem by finding a solution for the level $N$-supertile and showing that the solution must also work when the level $N$ supertiles are glued together.
\item{} We can sometimes find finitely many patches/local configurations, with the property that the tiling can be partitioned in these patches. Then, similarly to the above point, finding a solution for each individual patch, and showing that the solution still works when the pieces of the puzzle are put together can give a solution.
\end{itemize}
These techniques have been proven effective in finding optimal vertex-, edge- and face-colourings of various tilings of the plane \cite{ColouringsofAperiodicTilings}.

Due to the above, the 1-2-3 Conjecture is reformulated into Conjecture \ref{conj:123} which says that there exist 3-edge labellings on some collection of finite patches of a FLC polygonal tiling that leads to a vertex-colouring 3-edge labelling of the whole tiling. This is true for periodic tilings, Theorem \ref{thm:periodic}, but remains a conjecture for non-periodic tilings. However, we are able to establish the conjecture in quite a few non-periodic tilings. In fact, we obtain slightly more as the algorithms will work for any polygonal tiling whose local configurations match with the patch labeling. Let us emphasize here that, while for the tilings considered here we could find solutions, there are many examples where the number of finite local configurations of small size is very large, which makes it difficult to even find solutions for all small local configurations. Finding non-conflicting solutions for all small local configurations becomes exceedingly hard, and showing that they produce a global solution is tricky. Thus finding explicit solutions for tilings is not as easy as 1-2-3.  

The paper proceeds as follows, Section 2 contains all needed definitions and theory on tilings and graphs. Section 3 proves the 1-2-3 Conjecture for periodic polygonal tilings and illustrates the theory with worked-out examples for  the square, triangle, and hexagon periodic tilings. Finally, Section 4 proceeds with solutions to the conjecture for non-periodic tilings, namely: Chair (all three vertex placements), Non-Pinwheel, Pinwheel, Half-hex, Ammann-Beenker (two versions), Penrose Rhomb, and the Domino variant.

\section{Preliminaries}

We will be working with infinite graphs which arise from tilings. 

\begin{definition}
A {\bf tiling} in $\mathbb R^2$ is a collection of non-empty closed sets $\mathcal T = \{T_i \subset \mathbb R^2 : i\in \NN\}$ called a {\bf pattern} satisfying:
\begin{itemize}
    \item $\bigcup_{i\in\NN} T_i = \RR^2$,
    \item $T_i^\circ \cap T_j^\circ = \emptyset$ for all $i\neq j$,
    \item $T_i = \overline{T_i^\circ}$ (regular).
\end{itemize}
\end{definition}

The $T_i$ are called {\bf tiles} and the equivalence classes of all tiles up to congruence (translations, rotations, and reflections) are called {\bf prototiles}. Note that there are different usages of some of these terms, for instance prototile can mean tiles up to translation only. All of the examples we will be working with have connected tiles but this doesn't necessarily have to be true in general. The interested reader will find much, much more on this subject in \cite[Chapter 5 and 6]{TAO}.

\begin{definition}
A tiling is {\bf locally finite} if every compact set $K\subset \RR^2$ intersects only a finite number of tiles.
\end{definition}

In most cases we will abuse notation and refer to the tiling and its graph interchangeably. The 1-2-3 conjecture is true for nicely behaved tilings.

\begin{theorem}[Theorem 5, \cite{Stawiski}]
If $\mathcal T$ is a locally finite tiling with polygonal prototiles, then there is a solution to the 1-2-3 conjecture for $\mathcal T$.
\end{theorem}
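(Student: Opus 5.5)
The plan is to reduce the statement to Keusch's theorem for finite graphs and pass to the infinite graph by a compactness argument, which is the Zorn's Lemma argument of \cite{Stawiski} in its König-type form. First, the graph of $\mathcal T$ must be specified and shown to be locally finite, connected, simple and countable. Take as vertices the corners of the polygonal tiles, and as edges the segments of tile boundaries between consecutive vertices. Local finiteness of the tiling gives that every compact set meets only finitely many tiles. Each tile is a polygon with finitely many corners, so only finitely many vertices lie in any compact set. Moreover, each vertex lies on finitely many tiles, so it has finite degree. Connectedness holds because $\RR^2$ is covered by the tiles and adjacent tiles share boundary edges: the union of the boundaries is path-connected, since any two tiles are joined by a chain of tiles meeting along boundary segments. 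The graph is simple because edges are distinct boundary segments. If the tiles are such that two vertices bound two different edges, I would subdivide appropriately, or note that boundary segments between consecutive vertices are unique. Finally, the graph has infinitely many edges, so in particular at least two.

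Next, exhaust the graph by finite connected subgraphs. Fix a vertex $v_0$ and let $G_n$ be the subgraph induced by the vertices at graph distance at most $n$ from $v_0$. Each $G_n$ is finite by local finiteness, is connected, and has at least two edges once $n\ge 2$. By Keusch's theorem \cite{Keusch}, each $G_n$ has a vertex-colouring $3$-edge labelling $l_n$.

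The key step, and the main obstacle, is that $l_n$ restricted to $G_m$ for $m<n$ need not be a vertex colouring of $G_m$. The reason is that the weighted degrees of vertices on the boundary of $G_m$ involve edges outside $G_m$. To handle this, consider each $l_n$ as a partial labelling of the whole edge set and apply a diagonal (König's lemma / Tychonoff) argument. The edge set is countable, say $e_1,e_2,\dots$, and there are only three labels. So we can choose nested infinite subsequences along which the label of $e_k$ stabilises, for every $k$. Let $l$ be the resulting limit labelling.

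To check that $l$ is a vertex colouring, take an edge $uv$. Let $F$ be the finite set of edges incident to $u$ or $v$, and choose $n$ so large that $F\subset E(G_{n-1})$ and $l_n$ agrees with $l$ on $F$; this is possible along the final diagonal subsequence. Then $u$ and $v$ have all their incident edges in $G_n$, so their weighted degrees in $G_n$ under $l_n$ equal $s_l(u)$ and $s_l(v)$. Since $l_n$ is a vertex colouring of $G_n$, we get $s_l(u)\neq s_l(v)$. This argument uses exactly that each vertex has finite degree, which is where the local finiteness hypothesis enters. The polygonal hypothesis is used only to make the tiling's graph well defined with finite degrees.
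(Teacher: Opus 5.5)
Your proposal is correct and is essentially the argument the paper has in mind. The paper does not reprove this result but cites \cite{Stawiski}, describing it as a K\"{o}nig/Zorn-type compactness argument built on Keusch's finite theorem \cite{Keusch}; that is your scheme of solving finite connected exhausting subgraphs (graph balls rather than tiling patches) and extracting a diagonal limit labelling, and your final check is where local finiteness is needed: choose $n$ so that every edge at $u$ and $v$ lies in $G_n$ and $l_n$ agrees with $l$ on those edges.
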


As one would imagine, the proof uses a K\"{o}nig's Lemma/Zorn's Lemma type argument using the fact that any non-trivial patch of the tiling can be 1-2-3 solved by Keusch's result \cite{Keusch}. While this is a reasonable result it is completely non-constructive. To be able to locally construct a vertex-colouring 3-edge labelling we need more definitions.

\begin{definition}
A {\bf cluster} of a tiling is the collection of all tiles that non-trivially intersect a compact set. It is called a {\bf patch} if the compact set is also convex.
\end{definition}

\begin{definition}
A tiling $\mathcal T$ has {\bf FLC (finite local complexity)} if for every $R>0$ the set $\{\cT \cap B_R(t) : t\in \mathbb R\}$ is finite up to translations.
\end{definition}

FLC is strictly stronger than locally finite. For instance, any tiling with a finite number of polygonal tiles will be locally finite. However, the square tiling of the plane with each row shifted by the same irrational translation from the one below will not have FLC.

A tile and its nearest neighbours forms a particular kind of cluster called a {\bf collared tile} or {\bf collar}, see \cite[pg. 24]{Sadun}. Note that this is a somewhat vague definition as ``neighbours'' and ``nearest'' can be defined in multiple ways.

Often we will be interested in a {\bf local configuration} of the prototiles, which is a tiling of a compact set. This resembles a cluster or patch but may not be extendable to a tiling of the whole plane.

\begin{definition}
Given a tiling $\mathcal T$ the {\bf set of periods} is defined to be 
\[
\operatorname{per}(\mathcal T) = \{t\in\RR^2 : \mathcal T = t + \mathcal T = \{t + T_i : i\in\NN\}\}\,.
\]
The {\bf rank} of $\operatorname{per}(\mathcal T)$ is the dimension of its $\RR$-span (here 0, 1, or 2). A tiling is called {\bf periodic} if the rank of its periods is non-zero, {\bf fully periodic} if the rank is 2, and {\bf non-periodic} if the rank is 0. 
\end{definition}

\begin{definition}
To every locally finite tiling $\mathcal T$ whose prototiles are polygons we can associate a graph $G_\mathcal T = (V_\mathcal T, E_\mathcal T)$. The vertex set is the collection of vertices of the polygonal tiles and an edge exists between two vertices if they are both on a line segment in $\bigcap_{i\in\NN} \partial T_i$ with no intermediate vertices. 
\end{definition}

Note that a tile in the tiling may have more vertices on its boundary than given by the polygon because of how it adjoins other tiles. It is often desirable to ignore the degree 2 vertices and one could get rid of them in the graph by edge contraction. This will not be a concern if all tiles are convex polygons with vertices only at their corners.

We now turn to considering what finding a ``local'' solution to the 1-2-3 problem is in our context.

\begin{quest}\label{quest:123}
Let $\mathcal T$ be a locally finite tiling with polygonal prototiles. Does there exist a solution to the 1-2-3 problem which is locally derivable from $\mathcal T$? Meaning that there is some radius $r>0$ such that all patches that contain a ball of radius $r$ will be labeled identically with their translates.
\end{quest}

\begin{example} Let $\cT$ be the periodic square tiling of $\RR^2$. Let $l: E(\cT) \to  \RR$ be any weight function which is locally derivable from $\cT$.

Since $\cT$ is $\ZZ^2$-periodic, and since $l$ is locally derivable from $\cT$ we get that $l$ is $\ZZ^2$-periodic. This immediately implies that there exists $a,b \in \RR$ such that 
\[
l(e)=
\begin{cases}
a &\mbox{ if } e=\left( (n,m), (n+1,m) \right)\\
b &\mbox{ if } e=\left( (n,m), (n,m+1) \right) \,,
\end{cases}
\]
meaning that $l$ takes the value $a$ on all horizontal edges and $b$ on all vertical edges. But then 
\[
S_l(v)=2a+2b \qquad \forall v \in V(\cT) \,.
\]
Therefore, there is no solution to $\cT$ which is locally derivable from $\mathcal T$, answering the question in the negative.
\end{example}

Similarly, we can construct a non-periodic example by removing a single edge from the periodic square tiling. If the weight function was locally derivable then almost all of the tiling would be weighted the same as above, meaning that it will not produce a vertex colouring.

\begin{remark}
Even with the stronger assumption that $\cT$ is aperiodic, having only non-periodic tilings in its hull, the question still almost certainly has counterexamples.
\end{remark}

This implies that we need a more nuanced conjecture.

\begin{conj}\label{conj:123}
Let $\mathcal T$ be a locally finite tiling with a finite number of polygonal prototiles. Then, there exists some partition 
\[
\cT:= \bigcupdot_{k=1}^N \left(\varLambda_k+\mathcal{P}_k\right)
\]
into translations of finitely many (not necessarily distinct) finite subgraphs, and weights 
\[
l_k: \mathcal{P}_k \to \{1,2,3\}
\]
which produce a global solution to the 1-2-3 problem.
\end{conj}

\section{Periodic Tilings}

\begin{theorem}\label{thm:periodic}
Let $G$ be a fully periodic connected locally finite simple graph in $\RR^d$. Then, there exists a fully periodic solution to the 1-2-3 problem.  
\end{theorem}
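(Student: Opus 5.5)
Let $\Gamma\subset\RR^d$ be a rank-$d$ lattice of periods of $G$; local finiteness and full periodicity mean $G$ has finitely many $\Gamma$-orbits of vertices and of edges. The plan is to pass to a finite quotient graph, solve the problem there with Keusch's theorem \cite{Keusch}, and lift the labelling back periodically.

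First, I will choose a sublattice $\Gamma'=m\Gamma$ with $m$ large. Since $G$ is locally finite and periodic, there are only finitely many edge orbits. Hence edge lengths are bounded, say by $L$, and any two vertices at distance at most $2L$ are separated by only finitely many lattice vectors. I will take $m$ large enough that no nonzero vector of $\Gamma'$ has length at most $2L$. Then I will form the quotient graph $H=G/\Gamma'$, whose vertices are $\Gamma'$-orbits of vertices and whose edges are $\Gamma'$-orbits of edges. $H$ is finite. The choice of $m$ guarantees three things.
\begin{enumerate}
\item $H$ has no loops, since an edge $uv$ with $v=u+\gamma$ would give $|\gamma|\le L$.
\item $H$ has no multiple edges, since two edges $uv$ and $uv'$ with $v'=v+\gamma$ would give $|\gamma|\le 2L$.
\item The covering map $G\to H$ is a bijection from the edges at $u$ to the edges at $[u]$.
\end{enumerate}
So $H$ is a finite simple graph, and it is connected because $G$ is.

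Second, if $H$ has at least two edges (true for $m$ large, as long as $G$ has an edge), Keusch's theorem gives a labelling $l_H:E(H)\to\{1,2,3\}$ that is vertex colouring. I will define $l(e)=l_H([e])$ on $G$. This $l$ is $\Gamma'$-invariant, so it is fully periodic. By the local bijection in item 3, $s_l(u)=s_{l_H}([u])$ for every vertex $u$. For an edge $uv$ of $G$, $[u][v]$ is an edge of $H$, so $s_l(u)\ne s_l(v)$. The algorithm is then explicit: compute $H$ and search its finitely many labellings.

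The main obstacle is the choice of $m$: the quotient must be simple and the covering must be a local isomorphism on stars. This is handled by the displacement bound above, and the only inputs are that edge lengths are bounded and that $\Gamma'$ avoids a bounded ball. There are degenerate cases. If $G$ has at most one edge, full periodicity forces $G$ to be edgeless (or $G$ is infinite with no edges, contradicting connectedness unless $G$ is a single vertex), so the statement is vacuous. Connectedness of $H$ is not actually needed, only that $H$ has no $K_2$ component. A $K_2$ component would lift to a component of $G$ consisting of a single edge, which is impossible since $G$ is connected and infinite.
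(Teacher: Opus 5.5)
Your proposal is correct and follows essentially the same route as the paper: quotient by a sufficiently coarse sublattice of periods, check that the quotient is a finite simple graph and that the covering is injective on stars, apply Keusch's theorem, and lift the labelling periodically. The only difference is in how the sublattice is chosen. The paper normalizes the periods to $\ZZ^d$ and uses $2N\ZZ^d$ with $\mathcal{J}\subset(-N,N)^d$, while you use $m\Gamma$ with every nonzero vector longer than $2L$.
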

\begin{proof}
Since $G$ is fully periodic and locally finite, $\operatorname{per}(G)$
is a lattice in $\RR^d$. Thus, there exist vectors $v_1, v_2 , \ldots, v_d\in \RR^d$ which are linearly independent over $\RR$ such that 
\[
\operatorname{per}(G) =\ZZ v_1 \oplus  \ZZ v_2 \oplus \ldots \oplus \ZZ v_d \,.
\]
Consider now the linear transformation 
\begin{align*}
T &: \RR^d \to \RR^d \\
T(v_i)&=e_i \,,
\end{align*}
where $e_1, e_2, \ldots, e_d$ is the standard basis in $\RR^d$.

Then $T$ is an automorphism of $\RR^d$ to itself and induces a graph isomorphism between $G$ and 
\[
G':=T(G) \,.
\]
It is obvious that 
\[
\operatorname{per}(G')=\ZZ^d
\]
and that any fully periodic solution to the 1-2-3 problem on $G'$ induces a fully periodic solution to the 1-2-3 problem on $G$.

\smallskip 
Therefore, without loss of generality we can assume that 
\[
\operatorname{per}(G)=\ZZ^d\,. 
\]

\smallskip

Let 
\[
\mathcal{F}:= V(G) \cap [0,1)^d \,.
\]
As well, let $\mathcal{J}$ be all edges incident to a vertex in $\mathcal{F}$. Since $G$ is locally finite, $\mathcal{F}$ and $\mathcal{J}$ are both finite. Since $G$ is connected and fully periodic, there are at least $2d$ edges in $\mathcal{J}$.

\smallskip 

Pick some $N \in \ZZ$ so that $\mathcal{J} \subset (-N,N)^d$.
Since $G$ is $\ZZ^d$-periodic, it is also $2N\ZZ^d$ periodic. Therefore, we can define a factor graph $G_{f}$
\[
G_f:= G/(2N\ZZ^d) \,.
\]
Formally, this is defined as follows: On the vertices $V(G)$ we can define an equivalence relation 
\[
v \equiv w \Leftrightarrow v-w \in 2N\ZZ^d \,.
\]
Since $G$ is $2N\ZZ^d$-periodic, $\equiv$ induces an equivalence relation on $E(G)$. 
Then, it is immediate the sets of equivalence classes
\begin{align*}
V(G_{f})&:= \{ [v] : v \in V(G) \} \\
E(G_{f})&:= \{ [v][w] : vw \in E(G) \} 
\end{align*}
is a graph.
Now, given the construction of the factor graph, for all $v \in V(G)$ the mapping $e \mapsto [e]$ is injective on $\mathcal{J}$ and so is injective on
$\{ e \in E(G): e \mbox{ is incident to } v \}$.

Since $G$ is locally finite and $2N\ZZ^d$-periodic, $G_f$ is a finite graph. Moreover, $G$ is connected and hence so is $G_{f}$. By the choice of $N$ no edge in $E(G_f)$ is a loop and so $G_f$ is simple.
Thus, $G_f$ is a finite connected simple graph with at least two edges. Therefore, there exists some 
\[
l: E(G_f) \to \{1,2,3\}
\]
which provides a solution to the 1-2-3 problem.

To weight the original graph, define 
\begin{align*}
w&: E(G) \to \{ 1,2,3\} \\
w(e)&:= l([e]) \,.
\end{align*}
Therefore, given that the factor maps are locally injective, we get
\begin{align*}
s_w(u)&= \sum_{uv \in E(G)} w(uv)=\sum_{[u][v] \in E(G_f)} l([u][v])= s_l([u]) \,.
\end{align*}
Since $s_l$ is a colouring of the vertices of $G_f$, then $s_w$ is a colouring of the vertices of $G$. Therefore, $w$ gives a $2N \ZZ^d$-periodic solution to the 1-2-3 problem.
\end{proof}

\begin{remark} The proof of Theorem~\ref{thm:periodic} is constructive in the sense that it gives a finite process for generating fully periodic solutions for the fully periodic tilings. As well, it answers Conjecture \ref{conj:123} positively in this case.
\end{remark}

Below, we provide some solutions for the most common fully periodic tilings in $\RR^2$, which are constructed geometrically without relying on the algorithm in Theorem~\ref{thm:periodic}.

\subsection{Square}
\begin{proposition} Consider the periodic tiling of the plane with the following $2 \times 2$ supertile. Assign 1-2-3 to the edges as below.

\begin{center}
\resizebox{100pt}{!}{
\begin{tikzpicture}[scale=1.7] 
\foreach \a/\b in {0/0}
{
\draw[line width=2.2mm] (0+\a,0+\b) -- (6+\a,0+\b) -- (6+\a,6+\b)--(0+\a,6+\b)--(0+\a,0+\b);
\draw (3+\a,0+\b)--(3+\a,6+\b);
\draw (0+\a,3+\b)--(6+\a,3+\b);
\foreach \x in  { 0, 3}
{
\draw (\x+1.5+\a,+\b) node[shape=circle,fill=white,font={\Huge}] {\color{blue}$1$};
\draw (\x+1.5+\a, 6+\b) node[shape=circle,fill=white,font={\Huge}] {\color{blue}$1$};
\draw (+\a,\x+1.5+\b) node[shape=circle,fill=white,font={\Huge}] {\color{blue}$1$};
\draw (6+\a, \x+1.5+\b) node[shape=circle,fill=white,font={\Huge}] {\color{blue}$1$};
\draw (\x+1.5+\a, 3+\b) node[shape=circle,fill=white,font={\Huge}] {\color{blue}$2$};
\draw (3+\a,\x+1.5+\b) node[shape=circle,fill=white,font={\Huge}] {\color{blue}$2$};
}}
\end{tikzpicture}}
\end{center}

This gives a solution to the 1-2-3 problem for the periodic tiling of the plane with squares.
\end{proposition}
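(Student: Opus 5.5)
The plan is to make the labelling explicit in coordinates and compute the weighted degree of every vertex directly. Place the square tiling so that its vertex set is $\ZZ^2$, and so that the displayed $2\times 2$ supertile is $[0,2]^2$. The supertile is then repeated by the lattice $2\ZZ^2$. The first step is to check that the labelling is well defined on the whole plane. The only edges shared by two translated copies of the supertile are its boundary edges. All of these carry the label $1$, and the opposite sides of the supertile are labelled identically, so the translated copies agree wherever they overlap.

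Next I translate the picture into a rule. Every edge lies on a vertical line $x=k$ or a horizontal line $y=k$ with $k\in\ZZ$. The boundary lines of the supertiles are exactly those with $k$ even, and they carry label $1$. The middle lines of the supertiles are those with $k$ odd, and they carry label $2$. So, writing $a(k)=1$ if $k$ is even and $a(k)=2$ if $k$ is odd, an edge on the line $x=k$ or $y=k$ has label $a(k)$.

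The vertex $(m,n)$ has four incident edges. Two of them are vertical and lie on $x=m$; the other two are horizontal and lie on $y=n$. Hence
\[
s_l(m,n)=2a(m)+2a(n)=4+2\,\#\{\text{odd coordinates of }(m,n)\},
\]
which takes the values $4$, $6$ or $8$.

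Finally, two adjacent vertices differ by $\pm1$ in exactly one coordinate. That coordinate changes parity and the other does not, so the number of odd coordinates changes by exactly one. Their weighted degrees therefore differ by $\pm 2$, and in particular are distinct. This shows that $l$ is a vertex colouring using only the labels $1$ and $2$, which settles the proposition.

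There is no real obstacle. The only point needing care is the bookkeeping in the first step: confirming that the picture means the boundary edges are shared between adjacent supertiles, and hence counted once, with a consistent label. Once that is fixed, the computation above is immediate.
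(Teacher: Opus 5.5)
Your proof is correct and is essentially the paper's argument. The paper likewise sorts vertices into supertile centres (sum $8$), boundary midpoints (sum $6$) and corners (sum $4$), and your parity formula $2a(m)+2a(n)$ is the same computation in coordinates; yours additionally makes explicit the adjacency check, namely that adjacent vertices differ in exactly one coordinate's parity, which the paper leaves to the picture.
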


\begin{proof}
The internal vertex of the supertile has a sum of 8. When we combine the supertiles along all edges the vertices adjacent to the internal vertex will all have two edges weights of 2 (the internal supertile edges) and two edges with weights of 1 (the external supertile edges) resulting in vertices that sum up to 6. The external corners of the supertile will combine with 3 other supertiles to give a vertex with edges each weighted 1 and sums to 4.

\tikzset{square/.pic={
       \foreach \a/\b in {0/0}
{
\draw[line width=4mm] (0+\a,0+\b) -- (6+\a,0+\b) -- (6+\a,6+\b)--(0+\a,6+\b)--(0+\a,0+\b);
\draw (3+\a,0+\b)--(3+\a,6+\b);
\draw (0+\a,3+\b)--(6+\a,3+\b);
\foreach \x in  { 0, 3}
{
\draw (\x+1.5+\a,+\b) node[shape=circle,fill=white,font={\Huge}] {\color{blue}$1$};
\draw (\x+1.5+\a, 6+\b) node[shape=circle,fill=white,font={\Huge}] {\color{blue}$1$};
\draw (+\a,\x+1.5+\b) node[shape=circle,fill=white,font={\Huge}] {\color{blue}$1$};
\draw (6+\a, \x+1.5+\b) node[shape=circle,fill=white,font={\Huge}] {\color{blue}$1$};
\draw (\x+1.5+\a, 3+\b) node[shape=circle,fill=white,font={\Huge}] {\color{blue}$2$};
\draw (3+\a,\x+1.5+\b) node[shape=circle,fill=white,font={\Huge}] {\color{blue}$2$};
\foreach \i/\j in {0/0,0/6,6/6,6/0}
{
\node[shape=circle, draw=red,fill=white,text=red,font={\Huge}] at (\i,\j) {4};
}
\foreach \i/\j in {0/3,3/6,6/3,3/0}
{
\node[shape=circle, draw=red,fill=white,text=red,font={\Huge}] at (\i,\j) {6};
}
\node[shape=circle, draw=red,fill=white,text=red,font={\Huge}] at (3,3) {8};
}}
    }}

\begin{center}

    \resizebox{150pt}{!}{ 
    \tikz{
    \clip (7,7) rectangle (25.5,25.5);
    \foreach \i/\j in {0/0,10.8/0,21.6/0,0/10.8,10.8/10.8,21.6/10.8,0/21.6,10.8/21.6,21.6/21.6}
    {\pic[scale=1.8] at (\i,\j) {square};}
    }
    }
\end{center}
\end{proof}

We notice that every individual square (prototile) will have a corner with a sum of 4, a corner with a sum of 8, and two corners with sums of 6. Therefore, 6 will occur with a frequency of 1/2 while 4 and 8 will each occur with a frequency of 1/4.

\subsection{Equilateral Triangle}
\begin{proposition}Consider the periodic tiling of the plane with rhombi consisting of 18 equilateral triangles. Assign 1-2-3 to the edges as below.

\begin{center}
    \resizebox{210pt}{!}{
    \begin{tikzpicture}[scale=1.2]
        \draw[line width=2.5mm] (0,0)--(6,10.4)--(18,10.4)--(12,0)--(0,0);
        \draw[line width=0.7mm] (4,6.92)--(16,6.92);
        \draw[line width=0.7mm] (2,3.46)--(14,3.46);
        \draw[line width=0.7mm] (2,3.46)--(4,0)--(10,10.4)--(14,3.46);
        \draw[line width=0.7mm] (4,6.92)--(8,0)--(14,10.4)--(16,6.92);
        \draw[line width=0.7mm] (6,10.4)--(12,0);
        \foreach \i/\j in {4/6.92,10/3.44,4/0,16/6.92}
        {
        \draw (\i+1,\j+1.73) node[shape=circle,fill=white,font={\Huge}]{\color{blue}$1$};
        }
        \foreach \i/\j in {2/3.46,8/6.92,8/0,14/3.46}
        {
        \draw (\i+1,\j+1.73) node[shape=circle,fill=white,font={\Huge}]{\color{blue}$2$};
        }
        \foreach \i/\j in {0/0,12/6.92,6/3.46,12/0}
        {
        \draw (\i+1,\j+1.73) node[shape=circle,fill=white,font={\Huge}]{\color{blue}$3$};
        }
        \foreach \i/\j in {12/6.92,6/3.46,16/6.92,12/0}
        {
        \draw (\i-1,\j+1.73) node[shape=circle,fill=white,font={\Huge}]{\color{blue}$1$};
        }
        \foreach \i/\j in {16/6.92,10/3.46,4/0}
        {
        \draw (\i-1,\j+1.73) node[shape=circle,fill=white,font={\Huge}]{\color{blue}$2$};
        }
        \foreach \i/\j in {8/6.92,14/3.46,8/0}
        {
        \draw (\i-1,\j+1.73) node[shape=circle,fill=white,font={\Huge}]{\color{blue}$3$};
        }
        \foreach \i/\j in {8/10.4,14/6.92,8/3.46,2/0}
        {
        \draw (\i,\j) node[shape=circle,fill=white,font={\Huge}]{\color{blue}$1$};
        }
        \foreach \i/\j in {12/10.4,6/6.92,12/3.46,6/0}
        {
        \draw (\i,\j) node[shape=circle,fill=white,font={\Huge}]{\color{blue}$2$};
        }
        \foreach \i/\j in {16/10.4,10/6.92,4/3.46,10/0}
        {
        \draw (\i,\j) node[shape=circle,fill=white,font={\Huge}]{\color{blue}$3$};
        }
    \end{tikzpicture}
    }
\end{center}
This gives a solution to 1-2-3 problem for the periodic tiling of the plane with equilateral triangles.

\end{proposition}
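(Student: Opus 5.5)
The plan is to copy the proof of the square case: compute the weighted degree of every vertex class in the periodic tiling generated by the rhombus supertile, then check that adjacent classes get different sums.

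\textbf{Setting up the vertex classes.} The rhombus has side length $3$ triangles, so the translation lattice is $3\Lambda$, where $\Lambda$ is the triangular lattice. Hence there are exactly $9$ vertex classes modulo the periods, each of degree $6$. Every vertex of the plane is a translate of one of them. A convenient representative set is the vertices of the rhombus with lattice coordinates $(i,j)$, $0\le i,j\le 2$. The $4$ corners of the rhombus are identified into one class. The interior points of each pair of opposite boundary sides are identified, giving $2+2$ classes. There are $4$ interior vertices. In total $1+4+4=9$.

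\textbf{Computing the sums.} Each edge of the tiling has one of three directions, and each edge class is represented exactly once in the supertile, with opposite boundary sides identified. That gives $27$ edge classes, matching $9\cdot 6/2$. For each of the $9$ vertex classes I will read off the six incident labels from the figure. For a boundary vertex this means collecting them across the glued copies: an edge on the left side of one rhombus is the right side of its neighbour, so the labels on the two opposite sides must be read together.

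\textbf{The key check.} List the $9$ sums $s(v)$. Then, for each of the $27$ edge classes $uv$, verify $s(u)\neq s(v)$. Because of periodicity it suffices to check the three edges leaving each vertex class in the directions $(1,0)$, $(0,1)$ and $(-1,1)$, with indices taken mod $3$. The ideal outcome is that all $9$ classes have pairwise distinct sums; then the check is automatic because distinct classes always get different colours. If some sums coincide, I will verify that the classes sharing a sum are never adjacent mod $3$ by going through the $27$ edge classes. Throughout, the tiling is described purely combinatorially as $\Lambda/3\Lambda$, exactly as in Theorem~\ref{thm:periodic}: the supertile labelling is a labelling of the finite factor graph $G_f=\Lambda/3\Lambda$. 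Here $G_f$ is simple because a $3$-fold period is enough to avoid loops and multiple edges in the triangular lattice.

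\textbf{Main obstacle.} The hard step is the bookkeeping at the boundary and corner vertices. A boundary vertex receives labels from two adjacent rhombi, and the corner class receives labels from four rhombi (the two acute and two obtuse corners meet). Misreading which side label is shared gives wrong sums. I will handle this the way the square proof did: draw a $3\times3$ block of supertiles, annotate each vertex with its sum, and verify adjacency visually. Finally, I will note the frequencies of the sums as a consistency check: they must sum to $1$ over $9$ classes, and each label must appear on $9$ edge classes, with total label sum equal to half the total of the vertex sums.
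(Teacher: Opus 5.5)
Your plan is correct and is essentially the paper's proof: compute the weighted degree of each of the nine vertex classes modulo the period lattice (reading boundary labels across the glued copies), then check that adjacent classes differ, which the paper does by annotating a $3\times 3$ block of supertiles. It is your fallback case that actually occurs: the sums take only the values $9$, $12$, $15$, with the corner class and the two interior vertices on the long diagonal getting $12$. They depend only on $i-j \bmod 3$, and every edge direction $(1,0),(0,1),(-1,1)$ changes $i-j$ by a nonzero residue, so they form the standard proper $3$-colouring of the triangular lattice.
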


\begin{proof}
The sums of the four vertices within the rhombus supertile are simple to calculate and will remain unchanged as we tile the plane. The four corners of the supertile will sum to 12 as they join. The vertices along the edge of the supertile between the corners will sum up to 9 and 15 in that order.   

 \tikzset{rhombus/.pic={
        \draw[line width=4mm] (0,0)--(6,10.4)--(18,10.4)--(12,0)--(0,0);
        \draw[line width=0.7mm] (4,6.92)--(16,6.92);
        \draw[line width=0.7mm] (2,3.46)--(14,3.46);
        \draw[line width=0.7mm] (2,3.46)--(4,0)--(10,10.4)--(14,3.46);
        \draw[line width=0.7mm] (4,6.92)--(8,0)--(14,10.4)--(16,6.92);
        \draw[line width=0.7mm] (6,10.4)--(12,0);
        \foreach \i/\j in {4/6.92,10/3.44,4/0,16/6.92}
        {
        \draw (\i+1,\j+1.73) node[shape=circle,fill=white,font={\Huge}]{\color{blue}$1$};
        }
        \foreach \i/\j in {2/3.46,8/6.92,8/0,14/3.46}
        {
        \draw (\i+1,\j+1.73) node[shape=circle,fill=white,font={\Huge}]{\color{blue}$2$};
        }
        \foreach \i/\j in {0/0,12/6.92,6/3.46,12/0}
        {
        \draw (\i+1,\j+1.73) node[shape=circle,fill=white,font={\Huge}]{\color{blue}$3$};
        }
        \foreach \i/\j in {12/6.92,6/3.46,16/6.92,12/0}
        {
        \draw (\i-1,\j+1.73) node[shape=circle,fill=white,font={\Huge}]{\color{blue}$1$};
        }
        \foreach \i/\j in {16/6.92,10/3.46,4/0}
        {
        \draw (\i-1,\j+1.73) node[shape=circle,fill=white,font={\Huge}]{\color{blue}$2$};
        }
        \foreach \i/\j in {8/6.92,14/3.46,8/0}
        {
        \draw (\i-1,\j+1.73) node[shape=circle,fill=white,font={\Huge}]{\color{blue}$3$};
        }
        \foreach \i/\j in {8/10.4,14/6.92,8/3.46,2/0}
        {
        \draw (\i,\j) node[shape=circle,fill=white,font={\Huge}]{\color{blue}$1$};
        }
        \foreach \i/\j in {12/10.4,6/6.92,12/3.46,6/0}
        {
        \draw (\i,\j) node[shape=circle,fill=white,font={\Huge}]{\color{blue}$2$};
        }
        \foreach \i/\j in {16/10.4,10/6.92,4/3.46,10/0}
        {
        \draw (\i,\j) node[shape=circle,fill=white,font={\Huge}]{\color{blue}$3$};
        }
        \foreach \i/\j in {6/10.4,18/10.4,12/6.92,6/3.46,0/0,12/0}
        {
        \node[shape=circle, draw=red, fill=white, text=red, font={\Huge}] at (\i,\j) {12};
        }
        \foreach \i/\j in {10/10.4,4/6.92,16/6.92,10/3.46,4/0}
        {
        \node[shape=circle, draw=red, fill=white, text=red, font={\Huge}] at (\i,\j) {9};
        }
        \foreach \i/\j in {14/10.4,8/6.92,2/3.46,14/3.46,8/0}
        {
        \node[shape=circle, draw=red, fill=white, text=red, font={\Huge}] at (\i,\j) {15};
        }
    }}

\begin{center}

    \resizebox{270pt}{!}{ 
    \tikz{
    \clip (18.5,9.5) rectangle (47,28);
    \foreach \i/\j in {0/0,14.4/0,28.8/0,7.2/12.48,21.6/12.48,36/12.48,14.4/24.96,28.8/24.96,43.2/24.96}
    {\pic[scale=1.2] at (\i,\j) {rhombus};}
    }
    }
\end{center}
\end{proof}

We notice that the sums in each row alternate between 12, 9 and 15. Also, each triangle has a vertex of sum 12, 9, and 15. Therefore, each sum of 12, 9, and 15 occurs with a frequency of 1/3.

\subsection{Hexagon}
\begin{proposition}
    Consider the periodic tiling of the plane with the following honeycomb supertile consisting of 7 hexagons. Assign 1-2-3 to the edges as below.
    
\begin{center}
\resizebox{150pt}{!}{
    \begin{tikzpicture}[scale=1.1]
    \draw[line width=2mm] (-6,-5.2)--(-7.5,-2.6)--(-6,0)--(-7.5,2.6)--(-6,5.2)--(-3,5.2)--(-1.5,7.8)--(1.5,7.8)--(3,5.2)--(6,5.2)--(7.5,2.6)--(6,0)--(7.5,-2.6)--(6,-5.2)--(3,-5.2)--(1.5,-7.8)--(-1.5,-7.8)--(-3,-5.2)--(-6,-5.2);
    \draw[line width=0.2mm] (-3,5.2)--(-1.5,2.6)--(1.5,2.6)--(3,5.2);
    \draw[line width=0.2mm] (1.5,2.6)--(3,0)--(6,0);
    \draw[line width=0.2mm] (3,0)--(1.5,-2.6)--(3,-5.2);
    \draw[line width=0.2mm] (1.5,-2.6)--(-1.5,-2.6)--(-3,-5.2);
    \draw[line width=0.2mm] (-1.5,-2.6)--(-3,0)--(-6,0);
    \draw[line width=0.2mm] (-3,0)--(-1.5,2.6);
    \foreach \i/\j in {0/7.8,2.25/6.5,4.5/5.2,6.75/3.9,6.75/1.3,6.75/-1.3,6.75/-3.9,4.5/-5.2,2.25/-6.5,0/-7.8,-2.25/-6.5,-4.5/-5.2,-6.75/-3.9,-6.75/-1.3,-6.75/1.3,-6.75/3.9,-4.5/5.2,-2.25/6.5}
    {
    \draw (\i,\j) node[shape=circle,fill=white,font={\Huge}] {\color{blue}$1$};
    }
    \foreach \i/\j in {0/2.6,2.25/1.3,2.25/-1.3,0/-2.6,-2.25/-1.3,-2.25/1.3,
    -2.25/3.9,4.5/0,-2.25/-3.9}
    {
    \draw (\i,\j) node[shape=circle,fill=white,font={\Huge}] {\color{blue}$2$};
    }
    \foreach \i/\j in {2.25/3.9,2.25/-3.9,-4.5/0}
    {
    \draw (\i,\j) node[shape=circle,fill=white,font={\Huge}] {\color{blue}$3$};
    }
    \end{tikzpicture}
    }
\end{center}

This gives a solution to the 1-2-3 problem for the periodic tiling of the plane with hexagons.
\end{proposition}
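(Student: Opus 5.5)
The plan is to verify the vertex sums directly. Since the labelling is periodic with respect to the translation lattice of the 7-hexagon supertile, it suffices to check finitely many vertices, namely one fundamental domain's worth. Each hexagon has 6 vertices and each vertex is shared by 3 hexagons, so a fundamental domain contains $7\cdot 6/3=14$ vertices.

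\textbf{Step 1 (interior sums).} First I would compute the sums at the vertices whose incident edges all lie inside one supertile.

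\begin{itemize}
\item The six vertices of the central hexagon each meet two central edges labelled $2$ and one spoke. Reading the figure, going round the central hexagon the spokes carry $3,2,3,2,3,2$, so the sums alternate $7,6,7,6,7,6$. Adjacent central vertices therefore differ.
\item The outer endpoint of each spoke lies on the supertile boundary and has two boundary edges labelled $1$. Its sum is therefore $2+3=5$ or $2+2=4$, according to the spoke label.
\end{itemize}

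These values are unaffected by gluing, provided I check that the outer spoke endpoints receive no further edges from neighbouring supertiles. This holds because these vertices already have degree $3$, which is the full degree of a vertex of the hexagonal tiling.

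\textbf{Step 2 (gluing).} Next I would describe how translated supertiles meet. Each boundary vertex of degree $2$ in its own supertile gets its third edge from a neighbour, and there are two cases.

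\begin{itemize}
\item The third edge lies on the boundary between two other supertiles. It is then labelled $1$, and the vertex has sum $3$.
\item The third edge lies inside a single neighbouring supertile. Every interior edge meeting that neighbour's boundary is a spoke, so the vertex is a spoke endpoint of the neighbour, and its sum is $4$ or $5$ as computed in Step 1.
\end{itemize}

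Consequently every vertex of the tiling has one of the sums $3$, $4$, $5$, $6$, $7$. The sums $6$ and $7$ occur only on central hexagons. Since the neighbours of a central vertex are central vertices or spoke endpoints, every edge at a central vertex is properly coloured.

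\textbf{Step 3 (the main obstacle).} The remaining task is to rule out two adjacent boundary vertices with equal sums, that is, adjacent $3$–$3$, $4$–$4$ or $5$–$5$ pairs. For this I would fix coordinates for the supertile lattice; in the figure's coordinates it should be generated by translations such as $(9,5.2)$ and $(0,15.6)$. I would then place the six neighbouring supertiles around one supertile and, walking once around its 18-edge boundary, record which vertices are its own spoke endpoints, which are a neighbour's spoke endpoints, and which are pure boundary vertices with sum $3$. The aim is to show that along the boundary, and across the outward edges, the vertices with sum $3$ are always separated by spoke endpoints. I would also check that the sums $4$ and $5$ alternate wherever two spoke endpoints are joined. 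The choice of spokes $3,2,3,2,3,2$ in the figure is presumably made so that this alternation holds. This boundary bookkeeping is the step where an error is most likely, and I would do it by explicitly listing the 14 vertex classes with their sums and neighbour classes.
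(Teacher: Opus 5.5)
Your Steps 1 and 2 are correct and coincide with the paper's proof. Central vertices alternate $6,7$, a spoke endpoint gets $4$ or $5$ according to its spoke label, a point where three supertiles meet gets $3$, and every edge at a central vertex is properly coloured.

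The gap is Step 3. This is the only place where the gluing interacts with the particular spoke labels, and you do not carry it out. Moreover, the one concrete input you give for it is wrong. Write $u=(4.5,2.6)$ and $v=(0,5.2)$ for generators of the lattice of hexagon centres.
\begin{itemize}
\item Your vector $(9,5.2)=2u$ is not a period. The supertile translated by $2u$ contains the hexagon centred at $u$, which already lies in the original supertile.
\item Your two vectors $2u$ and $3v$ span a sublattice of index $6$. A lattice tiling by a $7$-hexagon tile must have index $7$.
\end{itemize}
The actual period lattice is generated by $(4.5,13)=u+2v$ and $(13.5,2.6)=3u-v$. This is the lattice in the paper's figure once the scale factor $1.2$ is removed. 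The mirror-image tiling under $y\mapsto -y$ needs no separate check, because this reflection fixes the labelled supertile.

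With the correct lattice the bookkeeping is short.
\begin{itemize}
\item The unlabelled tiling is invariant under rotation by $60^\circ$ about the centre of a supertile $S$, so one own endpoint determines the picture everywhere.
\item Along the boundary of $S$, its own spoke endpoints occupy every third vertex. The two vertices between consecutive ones are a triple point and a spoke endpoint of a neighbour.
\item For instance, the own endpoint $(-6,0)$ has spoke direction $180^\circ$, label $3$, sum $5$. It is flanked by the triple point $(-7.5,2.6)$ and by $(-7.5,-2.6)$. The latter is the endpoint of the $0^\circ$ spoke (label $2$, sum $4$) of the supertile centred at $(-13.5,-2.6)$.
\end{itemize}
Two consequences close the argument.
\begin{itemize}
\item No two triple points are adjacent. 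Each edge at a triple point is a boundary edge of some supertile, and in that supertile's boundary walk the triple point is flanked by spoke endpoints.
\item Wherever an own endpoint meets a neighbour's endpoint, the two spokes point in opposite directions. Since the labels alternate, opposite spokes carry different labels, so the two sums are $4$ and $5$.
\end{itemize}
This gives the boundary pattern $3,4,5,3,5,4,\dots$ and finishes the proof. Without this argument, or the explicit table of the $14$ classes you mention, your proposal does not exclude a $4$--$4$ or $5$--$5$ edge. The paper itself settles this step only through its picture of the glued supertiles.
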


\begin{proof}
    The internal vertices have alternating sums of 6 and 7. When we combine the supertiles along all edges the external supertile vertices will have either 3 external supertile edges or 2 external supertile edges and a single internal edge. The vertices adjacent to only other supertile edges will sum up to 3. The internal edges within the supertile that extend to the external vertices have alternate weights of 2 and 3. The external supertile vertices that are adjacent to the edges with a weighting of 2 will sum up to 4 and the vertices adjacent to the edges with a weighting of 3 will sum up to 5.

 \tikzset{honeycomb/.pic={
    \draw[line width=3.5mm] (-6,-5.2)--(-7.5,-2.6)--(-6,0)--(-7.5,2.6)--(-6,5.2)--(-3,5.2)--(-1.5,7.8)--(1.5,7.8)--(3,5.2)--(6,5.2)--(7.5,2.6)--(6,0)--(7.5,-2.6)--(6,-5.2)--(3,-5.2)--(1.5,-7.8)--(-1.5,-7.8)--(-3,-5.2)--(-6,-5.2);
    \draw[line width=0.2mm] (-3,5.2)--(-1.5,2.6)--(1.5,2.6)--(3,5.2);
    \draw[line width=0.2mm] (1.5,2.6)--(3,0)--(6,0);
    \draw[line width=0.2mm] (3,0)--(1.5,-2.6)--(3,-5.2);
    \draw[line width=0.2mm] (1.5,-2.6)--(-1.5,-2.6)--(-3,-5.2);
    \draw[line width=0.2mm] (-1.5,-2.6)--(-3,0)--(-6,0);
    \draw[line width=0.2mm] (-3,0)--(-1.5,2.6);
    \foreach \i/\j in {0/7.8,2.25/6.5,4.5/5.2,6.75/3.9,6.75/1.3,6.75/-1.3,6.75/-3.9,4.5/-5.2,2.25/-6.5,0/-7.8,-2.25/-6.5,-4.5/-5.2,-6.75/-3.9,-6.75/-1.3,-6.75/1.3,-6.75/3.9,-4.5/5.2,-2.25/6.5}
    {
    \draw (\i,\j) node[shape=circle,fill=white,font={\Huge}] {\color{blue}$1$};
    }
    \foreach \i/\j in {0/2.6,2.25/1.3,2.25/-1.3,0/-2.6,-2.25/-1.3,-2.25/1.3,
    -2.25/3.9,4.5/0,-2.25/-3.9}
    {
    \draw (\i,\j) node[shape=circle,fill=white,font={\Huge}] {\color{blue}$2$};
    }
    \foreach \i/\j in {2.25/3.9,2.25/-3.9,-4.5/0}
    {
    \draw (\i,\j) node[shape=circle,fill=white,font={\Huge}] {\color{blue}$3$};
    }  
    \foreach \i/\j in {-1.5/2.6,3/0,-1.5/-2.6}
    {   \node[shape=circle,draw=red,fill=white,text=red,font={\Huge}] at (\i,\j){6};
    }
    \foreach \i/\j in {1.5/2.6,1.5/-2.6,-3/0}
    {   \node[shape=circle,draw=red,fill=white,text=red,font={\Huge}] at (\i,\j){7};
    }
    \foreach \i/\j in {-1.5/7.8,6/5.2,7.5/-2.6,1.5/-7.8,-6/-5.2,-7.5/2.6}
    {   \node[shape=circle,draw=red,fill=white,text=red,font={\Huge}] at (\i,\j){3};
    }
    \foreach \i/\j in {1.5/7.8,6/0,6/-5.2,-3/-5.2,-7.5/-2.6,-3/5.2}
    {   \node[shape=circle,draw=red,fill=white,text=red,font={\Huge}] at (\i,\j){4};
    }
    \foreach \i/\j in {3/5.2,7.5/2.6,3/-5.2,-1.5/-7.8,-6/0,-6/5.2}
    {   \node[shape=circle,draw=red,fill=white,text=red,font={\Huge}] at (\i,\j){5};
    }
    }}

\begin{center}
    \resizebox{220pt}{!}{ 
    \tikz{
    \clip (-12,-11.9) rectangle (12,11.9);
    \foreach \i/\j in {0/0, 5.4/15.6, 16.2/3.12, 10.8/-12.48, -5.4/-15.6, -16.2/-3.12, -10.8/12.48}
    {\pic[scale=1.2] at (\i,\j) {honeycomb};}
    }
    }
\end{center}
\end{proof}

\tikzset{honeycombfreq/.pic={
    \draw[line width=3.5mm] (-6,-5.2)--(-7.5,-2.6)--(-6,0)--(-7.5,2.6)--(-6,5.2)--(-3,5.2)--(-1.5,7.8)--(1.5,7.8)--(3,5.2)--(6,5.2)--(7.5,2.6)--(6,0)--(7.5,-2.6)--(6,-5.2)--(3,-5.2)--(1.5,-7.8)--(-1.5,-7.8)--(-3,-5.2)--(-6,-5.2);
    \draw[line width=0.2mm] (-3,5.2)--(-1.5,2.6)--(1.5,2.6)--(3,5.2);
    \draw[line width=0.2mm] (1.5,2.6)--(3,0)--(6,0);
    \draw[line width=0.2mm] (3,0)--(1.5,-2.6)--(3,-5.2);
    \draw[line width=0.2mm] (1.5,-2.6)--(-1.5,-2.6)--(-3,-5.2);
    \draw[line width=0.2mm] (-1.5,-2.6)--(-3,0)--(-6,0);
    \draw[line width=0.2mm] (-3,0)--(-1.5,2.6);
    \foreach \i/\j in {0/-7.8,-2.25/-6.5,-4.5/-5.2,-6.75/-3.9,-6.75/-1.3,-6.75/1.3,-6.75/3.9,-4.5/5.2,-2.25/6.5}
    {
    \draw (\i,\j) node[shape=circle,fill=white,font={\Huge}] {\color{blue}$1$};
    }
    \foreach \i/\j in {0/2.6,2.25/1.3,2.25/-1.3,0/-2.6,-2.25/-1.3,-2.25/1.3,
    -2.25/3.9,4.5/0,-2.25/-3.9}
    {
    \draw (\i,\j) node[shape=circle,fill=white,font={\Huge}] {\color{blue}$2$};
    }
    \foreach \i/\j in {2.25/3.9,2.25/-3.9,-4.5/0}
    {
    \draw (\i,\j) node[shape=circle,fill=white,font={\Huge}] {\color{blue}$3$};
    }  
    \foreach \i/\j in {-1.5/2.6,3/0,-1.5/-2.6}
    {   \node[shape=circle,draw=red,fill=white,text=red,font={\Huge}] at (\i,\j){6};
    }
    \foreach \i/\j in {1.5/2.6,1.5/-2.6,-3/0}
    {   \node[shape=circle,draw=red,fill=white,text=red,font={\Huge}] at (\i,\j){7};
    }
    \foreach \i/\j in {-6/-5.2,-7.5/2.6}
    {   \node[shape=circle,draw=red,fill=white,text=red,font={\Huge}] at (\i,\j){3};
    }
    \foreach \i/\j in {-3/-5.2,-7.5/-2.6,-3/5.2}
    {   \node[shape=circle,draw=red,fill=white,text=red,font={\Huge}] at (\i,\j){4};
    }
    \foreach \i/\j in {-1.5/-7.8,-6/0,-6/5.2}
    {   \node[shape=circle,draw=red,fill=white,text=red,font={\Huge}] at (\i,\j){5};
    }
    }}

When we partially label the honeycomb supertile, as seen below, we can join them such that all edges are labeled without overlap. Therefore, the sums 4, 5, 6, and 7 each occur at a frequency of 3/14 and the sum 3 occurs at a frequency of 1/7.
\begin{center}
    \hspace*{-1cm}
    \setlength{\tabcolsep}{10pt}
    \begin{tabular}{c c}
    \resizebox{120pt}{!}{
    \tikz[]{
    \pic[scale=1] at (0,0) {honeycombfreq};}}
    
    \end{tabular}\hspace*{-1cm}
\end{center}

\section{Non-periodic Tilings}

\subsection{Chair}
Consider the non-periodic tiling of the plane with the \href{https://tilings.math.uni-bielefeld.de/substitution/chair/}{chair} in \cite{TilingEncyclopedia}.
There are multiple ways in which to place the vertices for the chair tiling. We will look at three of them.

First, a short standard argument shows the following (see for example \cite{FKRS} for details).

\begin{proposition}\label{prop:chair-collared} The following are all level 1 collars in the chair tiling:

    \tikzset{chair1a/.pic=
{
        \draw[line width=0.7mm] (-3,3)--(0,3)--(0,0)--(3,0)--(3,-3)--(-3,-3)--(-3,3);
       
    }}

\begin{center}
    \hspace*{-3cm}
    \setlength{\tabcolsep}{8pt}
    \begin{longtable}[c]{c c c}
        
        \resizebox{100pt}{!}{
        \tikz{
        \pic[rotate=180] at (0,0) {chair1a};
        \pic[rotate=270] at (6,0) {chair1a};
         \pic[rotate=90] at (0,6) {chair1a};
        \pic[rotate=270] at (6,12) {chair1a};
        \pic[rotate=270] at (9,9) {chair1a};
        \pic[rotate=270] at (12,6) {chair1a};
        \pic[rotate=180] at (12,12) {chair1a};
        \pic[rotate=0,draw=red] at (6,6) {chair1a};
        

        \node[font={\Huge}, text=black] at (6,-5) {Collar 1};
        }}

&
        
        \resizebox{100pt}{!}{
        \tikz{
        \pic[rotate=180] at (0,0) {chair1a};
        \pic[rotate=270] at (6,0) {chair1a};
         \pic[rotate=90] at (0,6) {chair1a};
        \pic[rotate=90] at (6,12) {chair1a};
        \pic[rotate=90] at (9,9) {chair1a};
        \pic[rotate=90] at (12,6) {chair1a};
        \pic[rotate=180] at (12,12) {chair1a};
        \pic[rotate=0,draw=red] at (6,6) {chair1a};
        

        \node[font={\Huge}, text=black] at (6,-5) {Collar 2};
        }}

&
        
        \resizebox{100pt}{!}{
        \tikz{
        \pic[rotate=180] at (0,0) {chair1a};
        \pic[rotate=270] at (6,0) {chair1a};
         \pic[rotate=90] at (0,6) {chair1a};
        \pic[rotate=270] at (6,12) {chair1a};
        \pic[rotate=0] at (9,9) {chair1a};
        \pic[rotate=90] at (12,6) {chair1a};
        \pic[rotate=0,draw=red] at (6,6) {chair1a};
        

        \node[font={\Huge}, text=black] at (6,-5) {Collar 3};
        }}
       \bigskip \\

&
        
        \resizebox{64pt}{!}{
        \tikz{
        \pic[rotate=0] at (0,0) {chair1a};
        \pic[rotate=0] at (6,6) {chair1a};
         \pic[rotate=270] at (0,6) {chair1a};
        \pic[rotate=90] at (6,0) {chair1a};
        
        \pic[rotate=0,draw=red] at (3,3) {chair1a};
        

        \node[font={\Huge}, text=black] at (3,-5) {Collar 4};
        }}
       \bigskip\\

        
        \resizebox{100pt}{!}{
        \tikz{
        \pic[rotate=0] at (0,0) {chair1a};
        \pic[rotate=90] at (3,-3) {chair1a};
         \pic[rotate=270] at (-3,3) {chair1a};
        \pic[rotate=270] at (3,9) {chair1a};
        \pic[rotate=0] at (6,6) {chair1a};
        \pic[rotate=90] at (9,3) {chair1a};

        \pic[rotate=0,draw=red] at (3,3) {chair1a};
        

        \node[font={\Huge}, text=black] at (3,-8) {Collar 5};
        }}

& 
        
        \resizebox{100pt}{!}{
        \tikz{
        \pic[rotate=0] at (0,0) {chair1a};
        \pic[rotate=90] at (3,-3) {chair1a};
         \pic[rotate=270] at (-3,3) {chair1a};
        \pic[rotate=90] at (3,9) {chair1a};
        \pic[rotate=90] at (6,6) {chair1a};
        \pic[rotate=90] at (9,3) {chair1a};
        \pic[rotate=180] at (9,9) {chair1a};

        \pic[rotate=0,draw=red] at (3,3) {chair1a};
        

        \node[font={\Huge}, text=black] at (3,-8) {Collar 6};
        }}

& 
        
        \resizebox{100pt}{!}{
        \tikz{
        \pic[rotate=0] at (0,0) {chair1a};
        \pic[rotate=90] at (3,-3) {chair1a};
         \pic[rotate=270] at (-3,3) {chair1a};
        \pic[rotate=270] at (3,9) {chair1a};
        \pic[rotate=270] at (6,6) {chair1a};
        \pic[rotate=270] at (9,3) {chair1a};
        \pic[rotate=180] at (9,9) {chair1a};

        \pic[rotate=0,draw=red] at (3,3) {chair1a};
        

        \node[font={\Huge}, text=black] at (3,-8) {Collar 7};
        }}\\

    \end{longtable}
    \hspace*{-3cm}
\end{center}

\end{proposition}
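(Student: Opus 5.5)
We would prove the statement with the standard substitution argument for collared tiles. The plan is: use the self-similarity of the chair tiling to reduce the question to finitely many level-1 supertiles, then list all collars that occur inside them. Write $\sigma$ for the chair substitution, which inflates by $2$ and replaces each chair by four chairs: one in the same orientation at the corner, and three rotated ones around it. Every chair tiling $\cT$ in the hull is the image $\sigma(\cT')$ of another chair tiling $\cT'$. Every point of $\cT$ therefore lies in the inflated image of some tile of $\cT'$. Moreover, the collar of any tile of $\cT$ is contained in the image $\sigma(C')$ of the collar $C'$ of its parent tile in $\cT'$. Recognizability (uniqueness of the supertile decomposition) is standard for the chair and we would quote it, as the paper does by referring to \cite{FKRS}.

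The key steps, in order, are as follows.

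First, determine the legal adjacencies of two chairs. Every legal patch appears in some iterate $\sigma^n(T)$ of a single chair $T$. We would compute $\sigma^2(T)$ and $\sigma^3(T)$ and read off every pair of chairs sharing an edge segment or a vertex. By the argument above, all vertex stars already appear in $\sigma^2$ of a vertex star, and the vertex-star atlas of the chair stabilises quickly. Concretely, we would iterate $\sigma$ on the two-tile patches until no new pair appears, which should happen within two or three steps.

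Second, build the collars. We take each tile $t$ of $\sigma^3(T)$ whose full collar lies inside $\sigma^3(T)$, plus the collars straddling the four level-3 supertiles around a central vertex. We reduce these up to rotation and reflection, since the prototile definition in the paper uses congruence, and we also reduce by the symmetry of $t$'s position in its parent. The list should be exactly Collars 1--7. Collars 1--3 arise when the central chair is the corner tile of its parent supertile, with the variation coming from how neighbouring supertiles meet the notch. Collar 4 corresponds to the central tile of a level-1 supertile, which is completely surrounded inside its parent. Collars 5--7 come from the two side tiles, each in three possible environments.

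Third, argue completeness. Any collar of a tile in $\cT$ lies inside $\sigma(C')$ for some collar $C'$ in $\cT'$, so the set of collars is closed under this derivation. Starting from the list obtained in the second step, we apply $\sigma$ to each of Collars 1--7 and check that every collar of every tile in the result is again on the list. This fixed-point check is the rigorous core of the proof: once the finite set is closed under derivation and contains the collars of $\sigma^n(T)$, it contains all collars.

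The main obstacle is the bookkeeping in the second and third steps. One must ensure that every neighbouring configuration across supertile boundaries has been seen, and in particular across the vertex where four level-$n$ supertiles meet. To control this, we would organise the check by the position of the central tile in its parent (corner, middle, or one of the two sides). We would then enumerate the possible parent collars, using the fact that each parent collar is itself one of Collars 1--7. This turns the verification into a finite $7\times 4$ table check.
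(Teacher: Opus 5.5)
The paper gives no argument of its own here beyond pointing to the standard substitution argument, and your outline is that argument. The problem is how you certify completeness: the step you call the rigorous core does not do it. Checking $D(L)\subseteq L$ (every collar of a child of a listed collar is listed) only shows that your list is self-consistent. What you know about the set $\mathcal C$ of all collars is the opposite inclusion $\mathcal C\subseteq D(\mathcal C)$: each collar has \emph{some} parent collar. These two inclusions together do not give $\mathcal C\subseteq L$; you need every collar to have an ancestor in your seed set.

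The induction that would supply this is false. That induction says collars lying inside $\sigma^{n+1}(T)$ come from collars lying inside $\sigma^{n}(T)$. Take the middle child of the tile at the outer corner of $\sigma^n(T)$. Its whole collar lies inside $\sigma^{n+1}(T)$, since it touches only its three siblings and a child of its parent's notch filler. Its parent's collar, however, sticks out of $\sigma^n(T)$. So the closure check, as you set it up, does not show that nothing is missing.

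The descent does work for stars of points. If all tiles containing $x$ lie in $\sigma^{n+1}(T)$, then $x/2$ is interior to the support of $\sigma^n(T)$, so all tiles containing $x/2$ lie in $\sigma^n(T)$. The fix is therefore:
\begin{itemize}
\item Run your forward closure on point stars, i.e.\ up to four chairs around a grid point, starting from those inside $\sigma(T)$. Legal pairs, as in your first step, do not in general tell you which three- and four-tile arrangements occur.
\item Observe that a collar fits in a $6\times 6$ box while level-3 cells have side $8$. Hence every collar lies in $\sigma^3(V)$ for some legal star $V$ of a grid vertex.
\end{itemize}
Reading collars off these finitely many patches is your second step carried out over \emph{all} legal stars. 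That is already complete, and the closure check becomes only a consistency test. An alternative fix is to seed with the collars near the centre of a fixed point of $\sigma^k$, use closure plus induction on distance to the centre, and then use repetitivity to pass to all chair tilings.

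Separately, your predicted dictionary between collars and positions in the parent does not match the pictures. The tile filling the central chair's notch determines the position:
\begin{itemize}
\item In Collars~3, 4, 5 the notch is filled by the corner square of a same-orientation chair.
\item In Collars~1, 2, 6, 7 it is filled by an arm square of a rotated chair.
\end{itemize}
Accordingly, Collar~4 is the middle chair; its notch lies outside its parent, so it is not ``completely surrounded''. Collars~3 and 5 have a corner chair at the centre, and Collars~1, 2, 6, 7 a side chair. The split into 1--3 versus 4--7 is by the environment of the central chair's outer corner, not by position in the parent. In Collars~1--3 four outer corners meet there; in Collars~4--7 the central corner square fills the notch of the chair drawn at $(0,0)$.

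Finally, the paper's list is taken modulo rotations only. Collar~2 is the mirror image of Collar~1 in the diagonal, and Collar~7 of Collar~6. Reducing also by reflections, as you propose, would give five classes rather than ``exactly Collars~1--7''.
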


\subsubsection{Vertex Placement 1}
The first vertex placement we will look at is where we place a vertex at every point on the grid so that each prototile has 8 vertices.
\begin{center}
\resizebox{50pt}{!}{
    \begin{tikzpicture}
        \draw[line width=0.7mm] (-3,3)--(0,3)--(0,0)--(3,0)--(3,-3)--(-3,-3)--(-3,3);

        \foreach \i/\j in {-3/-3,-3/0,-3/3,0/3,0/0,3/0,3/-3,0/-3}
            \node[shape=circle, draw=black, fill=black,font={\Huge}] at (\i,\j){$ $};
    \end{tikzpicture}}
\end{center}

\begin{proposition}
    Each vertex has a different degree than its adjacent vertices. Thus, by Lemma 3.1, assigning all edges a constant weight (1, 2, or 3) will provide a solution to the 1-2-3 problem.
\end{proposition}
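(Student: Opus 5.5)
The plan is to reduce everything to a finite check of vertex neighbourhoods, using the list of collars in Proposition~\ref{prop:chair-collared}, and then to invoke the observation behind ``Lemma 3.1''. That observation is simply that if $l\equiv c$ is constant, then $s_l(v)=c\deg(v)$. So $l$ is a vertex colouring exactly when adjacent vertices have different degrees, whatever $c\in\{1,2,3\}$ is. The real content is therefore the degree statement.

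First I would set up coordinates. Scale so that each chair is a union of three unit squares of $\ZZ^2$, with its eight vertices at lattice points. Every lattice point $p$ is the common corner of four unit squares, and these cannot all lie in one chair, since a chair has only three squares. Hence every lattice point lies on a tile boundary and is a vertex of $G_\cT$. The edges of $G_\cT$ are exactly the unit lattice segments that separate two squares lying in different chairs.

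This gives a local formula for the degree. Read the four squares around $p$ in cyclic order. Then $\deg(p)$ is the number of cyclically consecutive pairs that belong to different tiles. Using the shape of the chair, the possibilities are as follows:
\begin{itemize}
\item four distinct tiles give degree $4$;
\item exactly one adjacent pair in the same tile gives degree $3$ (a T-junction);
\item three squares in one tile gives degree $2$ (the re-entrant corner of that chair);
\item two adjacent pairs, each in a single tile, gives degree $2$ (a straight pass-through).
\end{itemize}
A diagonal pair in one tile without the third square is impossible, because the two end squares of a chair meet diagonally only at its inner corner. So all degrees lie in $\{2,3,4\}$, and each vertex type is determined by the $2\times 2$ block of squares around it.

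Next I would enumerate which $2\times2$ blocks, and which pairs of horizontally or vertically adjacent blocks (i.e.\ $2\times 3$ windows), actually occur in the chair tiling. Every such window is covered by the collar of some tile, so it appears inside one of Collars 1--7 of Proposition~\ref{prop:chair-collared}. For each collar, and up to the rotations and reflections it admits, I would label each lattice point of the central tile's boundary with its degree, computed by the rule above. I would then check along every unit boundary segment that the two endpoints receive different degrees.

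The main obstacle is this exhaustive check, in particular two danger cases. The first is two adjacent degree-$2$ vertices: a straight pass-through next to another pass-through or next to an inner corner along a length-$2$ shared side. The second is two adjacent T-junctions. For both I would argue directly from the collars, using the following facts. The midpoint of a long side of a chair can be a pass-through only if the neighbouring tile covers both squares opposite it. In that case the endpoints of that shared side are corners where the line turns or branches, so they have degree $3$ or $4$. Similarly, I would check that consecutive lattice points along any maximal straight boundary segment alternate between the value $2$ and the values in $\{3,4\}$, or that two T-junctions never share an edge, by inspecting Collars 1--7. Once no monochromatic edge is found in any collar, the degree statement holds globally, and the constant labelling solves the 1-2-3 problem.
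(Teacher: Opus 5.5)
Your proposal is essentially the paper's proof. The paper likewise labels the degrees of the eight boundary vertices of the central tile in each of Collars 1--7 and checks by inspection that adjacent ones differ. Since every edge lies on the boundary of some tile, this covers all edges, and your $2\times 2$-block rule for $\deg(p)$ simply makes that inspection explicit.

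One auxiliary remark is false and should be dropped. In Collar 5 the midpoint of the central tile's left long side has degree $3$ while its upper endpoint has degree $4$, so degrees do not alternate between $2$ and $\{3,4\}$ along straight boundary segments. Nothing depends on it, however: the exhaustive collar check is what proves the claim, and that check also rules out $4$--$4$ edges, a case missing from your list of danger cases.
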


\begin{proof}

    By Proposition~\ref{prop:chair-collared} there are 7 possible collared prototiles for the chair tiling. By inspection we see that for all collared tiles each vertex has a degree distinct from its adjacent vertices. The degrees are listed in the picture below. Thus, the result follows.

    \tikzset{chair1a/.pic=
{
        \draw[line width=0.7mm] (-3,3)--(0,3)--(0,0)--(3,0)--(3,-3)--(-3,-3)--(-3,3);
       
        \foreach \i/\j in {-3/-3,-3/0,-3/3,0/3,0/0,3/0,3/-3,0/-3}
            \node[shape=circle, draw=black, fill=black,font={\Huge}] at (\i,\j){$ $};
    }}

\begin{center}
    \hspace*{-3cm}
    \setlength{\tabcolsep}{8pt}
    \begin{longtable}[c]{c c c}
        
        \resizebox{140pt}{!}{
        \tikz{
        \pic[rotate=180] at (0,0) {chair1a};
        \pic[rotate=270] at (6,0) {chair1a};
         \pic[rotate=90] at (0,6) {chair1a};
        \pic[rotate=270] at (6,12) {chair1a};
        \pic[rotate=270] at (9,9) {chair1a};
        \pic[rotate=270] at (12,6) {chair1a};
        \pic[rotate=180] at (12,12) {chair1a};
        \pic[rotate=0,draw=red] at (6,6) {chair1a};
        
        \foreach \i/\j in {3/3,3/9,9/3}
     {\node[shape=circle, draw=red, fill=white, text=red,font={\Huge}] at (\i,\j) {4};
        }
        \foreach \i/\j in {9/6,6/9}
      {\node[shape=circle, draw=red, fill=white, text=red,font={\Huge}] at (\i,\j) {3};
        }
        \foreach \i/\j in {3/6,6/6,6/3}
     {\node[shape=circle, draw=red, fill=white, text=red,font={\Huge}] at (\i,\j) {2};
        }

        \node[font={\Huge}, text=black] at (6,-5) {Collar 1};
        }}

&
        
        \resizebox{140pt}{!}{
        \tikz{
        \pic[rotate=180] at (0,0) {chair1a};
        \pic[rotate=270] at (6,0) {chair1a};
         \pic[rotate=90] at (0,6) {chair1a};
        \pic[rotate=90] at (6,12) {chair1a};
        \pic[rotate=90] at (9,9) {chair1a};
        \pic[rotate=90] at (12,6) {chair1a};
        \pic[rotate=180] at (12,12) {chair1a};
        \pic[rotate=0,draw=red] at (6,6) {chair1a};
        
  \foreach \i/\j in {3/3,3/9,9/3}
     {\node[shape=circle, draw=red, fill=white, text=red,font={\Huge}] at (\i,\j) {4};
        }
        \foreach \i/\j in {9/6,6/9}
      {\node[shape=circle, draw=red, fill=white, text=red,font={\Huge}] at (\i,\j) {3};
        }
        \foreach \i/\j in {3/6,6/6,6/3}
     {\node[shape=circle, draw=red, fill=white, text=red,font={\Huge}] at (\i,\j) {2};
        }
        
        \node[font={\Huge}, text=black] at (6,-5) {Collar 2};
        }}

&
        
        \resizebox{140pt}{!}{
        \tikz{
        \pic[rotate=180] at (0,0) {chair1a};
        \pic[rotate=270] at (6,0) {chair1a};
         \pic[rotate=90] at (0,6) {chair1a};
        \pic[rotate=270] at (6,12) {chair1a};
        \pic[rotate=0] at (9,9) {chair1a};
        \pic[rotate=90] at (12,6) {chair1a};
        \pic[rotate=0,draw=red] at (6,6) {chair1a};
        
  \foreach \i/\j in {3/3,3/9,9/3}
     {\node[shape=circle, draw=red, fill=white, text=red,font={\Huge}] at (\i,\j) {4};
        }
        \foreach \i/\j in {9/6,6/9}
      {\node[shape=circle, draw=red, fill=white, text=red,font={\Huge}] at (\i,\j) {3};
        }
        \foreach \i/\j in {3/6,6/6,6/3}
     {\node[shape=circle, draw=red, fill=white, text=red,font={\Huge}] at (\i,\j) {2};
        }

        \node[font={\Huge}, text=black] at (6,-5) {Collar 3};
        }}
       \bigskip \\

&
        
        \resizebox{90pt}{!}{
        \tikz{
        \pic[rotate=0] at (0,0) {chair1a};
        \pic[rotate=0] at (6,6) {chair1a};
         \pic[rotate=270] at (0,6) {chair1a};
        \pic[rotate=90] at (6,0) {chair1a};
        
        \pic[rotate=0,draw=red] at (3,3) {chair1a};
        
        \foreach \i/\j in {}
        {\node[shape=circle, draw=red, fill=white, text=red,font={\Huge}] at (\i,\j) {4};
        }
        \foreach \i/\j in {3/0,0/3,3/6,6/3}
        {\node[shape=circle, draw=red, fill=white, text=red,font={\Huge}] at (\i,\j) {3};
        }
        \foreach \i/\j in {3/3,6/0,0/0,0/6}
        {\node[shape=circle, draw=red, fill=white, text=red,font={\Huge}] at (\i,\j) {2};
        }
        
        \node[font={\Huge}, text=black] at (3,-5) {Collar 4};
        }}
       \bigskip\\

        
        \resizebox{140pt}{!}{
        \tikz{
        \pic[rotate=0] at (0,0) {chair1a};
        \pic[rotate=90] at (3,-3) {chair1a};
         \pic[rotate=270] at (-3,3) {chair1a};
        \pic[rotate=270] at (3,9) {chair1a};
        \pic[rotate=0] at (6,6) {chair1a};
        \pic[rotate=90] at (9,3) {chair1a};

        \pic[rotate=0,draw=red] at (3,3) {chair1a};
        
        \foreach \i/\j in {0/6,6/0}
          {\node[shape=circle, draw=red, fill=white, text=red,font={\Huge}] at (\i,\j) {4};
        }
        \foreach \i/\j in {0/3,3/6,6/3,3/0}
        {\node[shape=circle, draw=red, fill=white, text=red,font={\Huge}] at (\i,\j) {3};
        }
        \foreach \i/\j in {0/0,3/3}
          {\node[shape=circle, draw=red, fill=white, text=red,font={\Huge}] at (\i,\j) {2};
        }

        \node[font={\Huge}, text=black] at (3,-8) {Collar 5};
        }}

& 
        
        \resizebox{140pt}{!}{
        \tikz{
        \pic[rotate=0] at (0,0) {chair1a};
        \pic[rotate=90] at (3,-3) {chair1a};
         \pic[rotate=270] at (-3,3) {chair1a};
        \pic[rotate=90] at (3,9) {chair1a};
        \pic[rotate=90] at (6,6) {chair1a};
        \pic[rotate=90] at (9,3) {chair1a};
        \pic[rotate=180] at (9,9) {chair1a};

        \pic[rotate=0,draw=red] at (3,3) {chair1a};
        
        \foreach \i/\j in {0/6,6/0}
       {\node[shape=circle, draw=red, fill=white, text=red,font={\Huge}] at (\i,\j) {4};
        }
        \foreach \i/\j in {0/3,3/6,6/3,3/0}
         {\node[shape=circle, draw=red, fill=white, text=red,font={\Huge}] at (\i,\j) {3};
        }
        \foreach \i/\j in {0/0,3/3}
          {\node[shape=circle, draw=red, fill=white, text=red,font={\Huge}] at (\i,\j) {2};
        }
        
        \node[font={\Huge}, text=black] at (3,-8) {Collar 6};
        }}

& 
        
        \resizebox{140pt}{!}{
        \tikz{
        \pic[rotate=0] at (0,0) {chair1a};
        \pic[rotate=90] at (3,-3) {chair1a};
         \pic[rotate=270] at (-3,3) {chair1a};
        \pic[rotate=270] at (3,9) {chair1a};
        \pic[rotate=270] at (6,6) {chair1a};
        \pic[rotate=270] at (9,3) {chair1a};
        \pic[rotate=180] at (9,9) {chair1a};

        \pic[rotate=0,draw=red] at (3,3) {chair1a};
        
        \foreach \i/\j in {0/6,6/0}
        {\node[shape=circle, draw=red, fill=white, text=red,font={\Huge}] at (\i,\j) {4};
        }
        \foreach \i/\j in {0/3,3/6,6/3,3/0}
          {\node[shape=circle, draw=red, fill=white, text=red,font={\Huge}] at (\i,\j) {3};
        }
        \foreach \i/\j in {0/0,3/3}
          {\node[shape=circle, draw=red, fill=white, text=red,font={\Huge}] at (\i,\j) {2};
        }

        \node[font={\Huge}, text=black] at (3,-8) {Collar 7};
        }}\\

    \end{longtable}
    \hspace*{-3cm}
\end{center}
\end{proof}

\subsubsection{Vertex Placement 2}
The next vertex placement we will look at is where we place vertices where edges meet at a 90$^{\circ}$ angle. By inspecting, we see that each tile has exactly six or eight vertices:

\begin{lemma} There are two prototiles in this tiling:
\tikzset{chair2aa/.pic=
{
        \draw[line width=0.7mm] (-3,3)--(0,3)--(0,0)--(3,0)--(3,-3)--(-3,-3)--(-3,3);

            
       
        \foreach \i/\j in {-3/-3,-3/3,0/3,0/0,3/0,3/-3}
            \node[shape=circle, draw=black, fill=black,font={\Huge}] at (\i,\j){$ $};
    }}

    \tikzset{chair2bb/.pic=
{
        \draw[line width=0.7mm] (-3,3)--(0,3)--(0,0)--(3,0)--(3,-3)--(-3,-3)--(-3,3);

       
        \foreach \i/\j in {-3/-3,-3/3,0/3,0/0,3/0,3/-3,-3/0,0/-3}
            \node[shape=circle, draw=black, fill=black,font={\Huge}] at (\i,\j){$ $};
    }}
\begin{center}
    \setlength{\tabcolsep}{30pt}
    \begin{tabular}{c c}
    \resizebox{50pt}{!}{
    \tikz[]{
    \pic{chair2aa}}}
         &  
    \resizebox{50pt}{!}{
    \tikz[]{
    \pic{chair2bb}}}\\ 
    \end{tabular}
\end{center}

The first prototile is at the center of Collar 1, Collar 2 and Collar 3, while the second prototile is at the center of Collar 4, Collar 5, Collar 6 and Collar 7.

\end{lemma}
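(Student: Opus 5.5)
The plan is to reduce the statement to a finite inspection using the collars of Proposition~\ref{prop:chair-collared}. In this placement a point is a vertex exactly when two edges of the tiling meet there at a right angle. Every tile of the chair tiling is a congruent copy of the chair in the picture. Take the chair with the side-length normalisation of the picture, so its boundary consists of two long sides of length $6$ and four short sides of length $3$, with every corner a lattice point of the grid $3\ZZ^2$.

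The first step is to locate the candidate vertices of a single tile. Each of the six corners of the chair is a point where two of its own edges meet at $90^\circ$, so all six corners are always vertices. Since all tiles live on the grid $3\ZZ^2$, every vertex lies on this grid. The only other grid points on the boundary of a tile are therefore the midpoints $(-3,0)$ and $(0,-3)$ of its two long sides. Such a midpoint is a vertex precisely when some neighbouring tile has a corner there, which produces a T-junction (or a crossing) of edges meeting at right angles. Consequently each tile has either $6$, $7$ or $8$ vertices, and the whole question is which of the two long-side midpoints are occupied by corners of neighbours.

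The second step uses Proposition~\ref{prop:chair-collared}: since every tile is the central tile of one of Collars 1--7, and the neighbours touching the long sides appear in the collar, it suffices to inspect these seven pictures. In Collars 1, 2 and 3, the tiles adjacent along each long side of the central (red) tile meet it along a full side of length $6$, or abut it so that no corner lies at the midpoint; hence only the six corners remain, giving the first prototile. In Collars 4, 5, 6 and 7, on each long side of the central tile two different neighbouring chairs meet, and their common corner sits at the midpoint; hence both midpoints are vertices and the central tile has eight vertices, giving the second prototile.

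The main obstacle is excluding the mixed case of exactly seven vertices: one must check in every collar that the two long-side midpoints behave identically. I expect this to follow directly from inspection of the seven collars, but I would double-check it using the symmetry of the chair about its diagonal through the inner corner $(0,0)$. This reflection swaps the two long sides, and on the level-1 supertile it should swap the neighbours along them. I would confirm case by case that the relevant portion of each collar is mapped to itself, which rules out seven vertices. Finally, since the level-1 collars exhaust all tiles, the two pictured prototiles are the only ones, which proves the lemma.
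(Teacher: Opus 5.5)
Your proposal is correct and follows the same route as the paper: both check the seven collared tiles of Proposition~\ref{prop:chair-collared} and read off the vertices of each central tile. Your reduction (only the two long-side midpoints can be extra vertices, and each is one exactly when two neighbours meet there rather than a single long side matching it) makes explicit what the paper's pictures show; the diagonal-symmetry double-check is not needed once the collars are inspected.
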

\begin{proof}
We can check this for all collared tiles:

    \tikzset{chair1a/.pic=
{
        \draw[line width=0.7mm] (-3,3)--(0,3)--(0,0)--(3,0)--(3,-3)--(-3,-3)--(-3,3);
       
    }}

\begin{center}
    \hspace*{-3cm}
    \setlength{\tabcolsep}{8pt}
    \begin{longtable}[c]{c c c}
        
        \resizebox{100pt}{!}{
        \tikz{
        \pic[rotate=180] at (0,0) {chair1a};
        \pic[rotate=270] at (6,0) {chair1a};
         \pic[rotate=90] at (0,6) {chair1a};
        \pic[rotate=270] at (6,12) {chair1a};
        \pic[rotate=270] at (9,9) {chair1a};
        \pic[rotate=270] at (12,6) {chair1a};
        \pic[rotate=180] at (12,12) {chair1a};
        \pic[rotate=0,draw=red] at (6,6) {chair1a};
        
        \foreach \i/\j in {3/3,3/9,9/3}
        {\node[shape=circle, draw=red, fill=red, text=red,font={\Huge},minimum size=6mm] at (\i,\j) {};
        }
        \foreach \i/\j in {9/6,6/9}
        {\node[shape=circle, draw=red, fill=red, text=red,font={\Huge},minimum size=6mm] at (\i,\j) {};
        }
        \foreach \i/\j in {6/6}
        {\node[shape=circle, draw=red, fill=red, text=red,font={\Huge},minimum size=6mm] at (\i,\j) {};

        \node[font={\Huge}, text=black] at (6,-5) {Collar 1};
        }}}

&
        
        \resizebox{100pt}{!}{
        \tikz{
        \pic[rotate=180] at (0,0) {chair1a};
        \pic[rotate=270] at (6,0) {chair1a};
         \pic[rotate=90] at (0,6) {chair1a};
        \pic[rotate=90] at (6,12) {chair1a};
        \pic[rotate=90] at (9,9) {chair1a};
        \pic[rotate=90] at (12,6) {chair1a};
        \pic[rotate=180] at (12,12) {chair1a};
        \pic[rotate=0,draw=red] at (6,6) {chair1a};
        
        \foreach \i/\j in {3/3,3/9,9/3}
        {\node[shape=circle, draw=red, fill=red, text=red,font={\Huge},minimum size=6mm] at (\i,\j) {};
        }
        \foreach \i/\j in {9/6,6/9}
        {\node[shape=circle, draw=red, fill=red, text=red,font={\Huge},minimum size=6mm] at (\i,\j) {};
        }
        \foreach \i/\j in {6/6}
        {\node[shape=circle, draw=red, fill=red, text=red,font={\Huge},minimum size=6mm] at (\i,\j) {};

        \node[font={\Huge}, text=black] at (6,-5) {Collar 2};
        }}}

&
        
        \resizebox{100pt}{!}{
        \tikz{
        \pic[rotate=180] at (0,0) {chair1a};
        \pic[rotate=270] at (6,0) {chair1a};
         \pic[rotate=90] at (0,6) {chair1a};
        \pic[rotate=270] at (6,12) {chair1a};
        \pic[rotate=0] at (9,9) {chair1a};
        \pic[rotate=90] at (12,6) {chair1a};
        \pic[rotate=0,draw=red] at (6,6) {chair1a};
        
        \foreach \i/\j in {3/3,3/9,9/3}
        {\node[shape=circle, draw=red, fill=red, text=red,font={\Huge},minimum size=6mm] at (\i,\j) {};
        }
        \foreach \i/\j in {9/6,6/9}
        {\node[shape=circle, draw=red, fill=red, text=red,font={\Huge},minimum size=6mm] at (\i,\j) {};
        }
        \foreach \i/\j in {6/6}
        {\node[shape=circle, draw=red, fill=red, text=red,font={\Huge},minimum size=6mm] at (\i,\j) {};

        \node[font={\Huge}, text=black] at (6,-5) {Collar 3};
        }}}
       \bigskip \\

&
        
        \resizebox{64pt}{!}{
        \tikz{
        \pic[rotate=0] at (0,0) {chair1a};
        \pic[rotate=0] at (6,6) {chair1a};
         \pic[rotate=270] at (0,6) {chair1a};
        \pic[rotate=90] at (6,0) {chair1a};
        
        \pic[rotate=0,draw=red] at (3,3) {chair1a};
        
        \foreach \i/\j in {}
        {\node[shape=circle, draw=red, fill=red, text=red,font={\Huge},minimum size=6mm] at (\i,\j) {};
        }
        \foreach \i/\j in {3/0,0/3,3/6,6/3}
        {\node[shape=circle, draw=red, fill=red, text=red,font={\Huge},minimum size=6mm] at (\i,\j) {};
        }
        \foreach \i/\j in {3/3,6/0,0/0,0/6}
        {\node[shape=circle, draw=red, fill=red, text=red,font={\Huge},minimum size=6mm] at (\i,\j) {};

        \node[font={\Huge}, text=black] at (3,-5) {Collar 4};
        }}}
       \bigskip\\

        
        \resizebox{100pt}{!}{
        \tikz{
        \pic[rotate=0] at (0,0) {chair1a};
        \pic[rotate=90] at (3,-3) {chair1a};
         \pic[rotate=270] at (-3,3) {chair1a};
        \pic[rotate=270] at (3,9) {chair1a};
        \pic[rotate=0] at (6,6) {chair1a};
        \pic[rotate=90] at (9,3) {chair1a};

        \pic[rotate=0,draw=red] at (3,3) {chair1a};
        
        \foreach \i/\j in {0/6,6/0}
        {\node[shape=circle, draw=red, fill=red, text=red,font={\Huge},minimum size=6mm] at (\i,\j) {};
        }
        \foreach \i/\j in {0/3,3/6,6/3,3/0}
        {\node[shape=circle, draw=red, fill=red, text=red,font={\Huge},minimum size=6mm] at (\i,\j) {};
        }
        \foreach \i/\j in {0/0,3/3}
        {\node[shape=circle, draw=red, fill=red, text=red,font={\Huge},minimum size=6mm] at (\i,\j) {};

        \node[font={\Huge}, text=black] at (3,-8) {Collar 5};
        }}}

& 
        
        \resizebox{100pt}{!}{
        \tikz{
        \pic[rotate=0] at (0,0) {chair1a};
        \pic[rotate=90] at (3,-3) {chair1a};
         \pic[rotate=270] at (-3,3) {chair1a};
        \pic[rotate=90] at (3,9) {chair1a};
        \pic[rotate=90] at (6,6) {chair1a};
        \pic[rotate=90] at (9,3) {chair1a};
        \pic[rotate=180] at (9,9) {chair1a};

        \pic[rotate=0,draw=red] at (3,3) {chair1a};
        
        \foreach \i/\j in {0/6,6/0}
        {\node[shape=circle, draw=red, fill=red, text=red,font={\Huge},minimum size=6mm] at (\i,\j) {};
        }
        \foreach \i/\j in {0/3,3/6,6/3,3/0}
        {\node[shape=circle, draw=red, fill=red, text=red,font={\Huge},minimum size=6mm] at (\i,\j) {};
        }
        \foreach \i/\j in {0/0,3/3}
        {\node[shape=circle, draw=red, fill=red, text=red,font={\Huge},minimum size=6mm] at (\i,\j) {};

        \node[font={\Huge}, text=black] at (3,-8) {Collar 6};
        }}}

& 
        
        \resizebox{100pt}{!}{
        \tikz{
        \pic[rotate=0] at (0,0) {chair1a};
        \pic[rotate=90] at (3,-3) {chair1a};
         \pic[rotate=270] at (-3,3) {chair1a};
        \pic[rotate=270] at (3,9) {chair1a};
        \pic[rotate=270] at (6,6) {chair1a};
        \pic[rotate=270] at (9,3) {chair1a};
        \pic[rotate=180] at (9,9) {chair1a};

        \pic[rotate=0,draw=red] at (3,3) {chair1a};
        
        \foreach \i/\j in {0/6,6/0}
        {\node[shape=circle, draw=red, fill=red, text=red,font={\Huge},minimum size=6mm] at (\i,\j) {};
        }
        \foreach \i/\j in {0/3,3/6,6/3,3/0}
        {\node[shape=circle, draw=red, fill=red, text=red,font={\Huge},minimum size=6mm] at (\i,\j) {};
        }
        \foreach \i/\j in {0/0,3/3}
        {\node[shape=circle, draw=red, fill=red, text=red,font={\Huge},minimum size=6mm] at (\i,\j) {};

        \node[font={\Huge}, text=black] at (3,-8) {Collar 7};
        }}}\\

    \end{longtable}
    \hspace*{-3cm}
\end{center}
\end{proof}

\begin{proposition}
    Assigning the ``long" edges a weight of 2 and the ``short" edges a weight of 1 gives us a 1-2-3 solution.
\tikzset{chair2a/.pic=
{
        \draw[line width=0.7mm] (-3,3)--(0,3)--(0,0)--(3,0)--(3,-3)--(-3,-3)--(-3,3);

        \foreach \i/\j in {-1.5/3, 0/1.5, 1.5/0, 3/-1.5}
            \draw (\i,\j) node[font={\Huge},fill=white] {\color{blue}$1$};
            
        \foreach \i/\j in {-3/0,0/-3}
            \draw (\i,\j) node[font={\Huge},fill=white] {\color{blue}$2$};
       
        \foreach \i/\j in {-3/-3,-3/3,0/3,0/0,3/0,3/-3}
            \node[shape=circle, draw=black, fill=black,font={\Huge}] at (\i,\j){$ $};
    }}

    \tikzset{chair2b/.pic=
{
        \draw[line width=0.7mm] (-3,3)--(0,3)--(0,0)--(3,0)--(3,-3)--(-3,-3)--(-3,3);

        \foreach \i/\j in {-3/-1.5, -3/1.5, -1.5/3, 0/1.5, 1.5/0, 3/-1.5, -1.5/-3, 1.5/-3}
            \draw (\i,\j) node[font={\Huge},fill=white] {\color{blue}$1$};
       
        \foreach \i/\j in {-3/-3,-3/3,0/3,0/0,3/0,3/-3,-3/0,0/-3}
            \node[shape=circle, draw=black, fill=black,font={\Huge}] at (\i,\j){$ $};
    }}

\tikzset{chair2c/.pic=
{
        \draw[line width=0.7mm] (-3,3)--(0,3)--(0,0)--(3,0)--(3,-3)--(-3,-3)--(-3,3);

        \foreach \i/\j in {-1.5/3, 0/1.5, 1.5/0, 3/-1.5}
            \draw (\i,\j) node[font={\Huge},fill=white] {\color{blue}$1$};
       
        \foreach \i/\j in {-3/-3,-3/3,0/3,0/0,3/0,3/-3}
            \node[shape=circle, draw=black, fill=black,font={\Huge}] at (\i,\j){$ $};
    }}
    
\begin{center}
    \setlength{\tabcolsep}{30pt}
    \begin{tabular}{c c}
    \resizebox{74pt}{!}{
    \tikz[]{
    \pic{chair2a}}}
         &  
    \resizebox{74pt}{!}{
    \tikz[]{
    \pic{chair2b}}}\\ 
    \end{tabular}
\end{center}

\end{proposition}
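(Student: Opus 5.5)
The plan is to reduce the claim to a finite check over the seven collared tiles of Proposition~\ref{prop:chair-collared}, exactly as in the proof for Vertex Placement 1, using the two prototiles of the preceding lemma to identify which edges are ``long'' and which are ``short''. First I would make the weighting global and well defined. In this vertex placement every graph edge is a straight boundary segment between two consecutive $90^\circ$ corners. On the chair grid (unit $=3$ in the pictures) such a segment has length $1$ or $2$. The weight assigned is exactly its length: $2$ for a long side of the first prototile, $1$ for everything else. Hence the weight of an edge depends only on the segment and not on which of the two adjacent tiles we read it from, so the labelling is consistent across tile boundaries. Consequently $s_l(v)$ is simply the total length of the tiling boundary emanating from $v$ up to the neighbouring vertices, which is a convenient way to compute sums.

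Next comes locality. Every edge $uv$ lies on the boundary of some tile $T$, and both $u$ and $v$ are vertices of $T$. All edges incident to a vertex of $T$ lie on tiles containing that vertex, and those tiles all belong to the collar of $T$. So $s_l(u)$ and $s_l(v)$ are determined by the collar of $T$. It therefore suffices, for each of Collars 1--7, to compute the weighted sums at the vertices of the central (red) tile and check that any two of them joined by an edge of the central tile are different.

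The computation itself is the third step. I would annotate each collar with the weighted sums at the central tile's vertices, using the preceding lemma to know which vertices exist: six for the tile in Collars 1--3, eight for the tile in Collars 4--7. The expected pattern is as follows. The inner notch corner of a chair has only short edges, giving small sums. Corners where a long side meets the boundary pick up a $+2$. The midpoint vertices on the sides of the second prototile sit where a neighbour's corner touches, and they acquire an extra perpendicular edge. I expect the sums along each side of every central tile to alternate.

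The main obstacle is this case check. A vertex's sum depends on whether its incident edges are long or short, and that is decided by neighbouring tiles. This matters most for the vertices of the first prototile along its long sides, where a neighbouring tile must contribute no midpoint vertex. The point needing care is confirming that two adjacent corners never tie, e.g.\ at sum $4$ from $2+1+1$ versus $1+1+1+1$. I would first tabulate the local configuration at each grid point type: degree-$2$ corners, T-junctions, and crosses, with the lengths of their arms. I would then read the collars off against this table rather than recompute each picture separately.
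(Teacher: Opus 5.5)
Your plan follows the paper's own proof: inspect Collars 1--7 of Proposition~\ref{prop:chair-collared}, write down the weighted sums at the vertices of the central tile, and check that consecutive ones differ. The step that fails is the locality claim. You say every tile containing a vertex of the central tile lies in the drawn collar, so the sums are determined there. This is false for the seven drawn collars. The weight of an edge at a corner $v$ of the central tile depends on its length, that is, on whether a neighbour's long side through $v$ is split at its midpoint. That is decided by the tile filling the square diagonally opposite the central tile at $v$. This tile meets the central tile only at $v$ and is not drawn.

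For example, in Collar 1 (picture coordinates) the edge going up from the corner $(3,9)$ runs along the long side of the tile at $(6,12)$. Whether $(3,12)$ is a vertex depends on the undrawn tile on $[0,3]\times[9,12]$. So $s_l(3,9)=2+1+1+w$ with $w\in\{1,2\}$, giving $5$ or $6$. The same happens at $(9,3)$ in Collars 2 and 3. It also happens at outer corners in Collars 5--7, where up to two such edges only give $s_l\ge 4$. So the exact sums you plan to tabulate are not determined by the seven pictures.

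There are two ways to repair this. The first is what the paper does: record the possible values ($5$ or $6$, resp.\ $\ge 4$) and check that every value avoids the sums of the adjacent vertices of the central tile ($8$ and $3$, resp.\ $3$ and $3$). The second is to take the collar to mean all tiles meeting $T$. Then your locality statement is true, because the two tiles flanking the first unit segment of an edge at $v$ both contain $v$ and decide whether the next lattice point is a vertex. But you would then need a classification of these larger patches, which the seven collars do not supply. With either fix, the rest of your argument goes through: weight equals length, so labels agree across tiles, and every edge is checked inside some collar.
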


\begin{proof}
    We will use the 7 possible collared tiles to show that no conflicts exist.

\tikzset{chair2a/.pic=
{
        \draw[line width=0.7mm] (-3,3)--(0,3)--(0,0)--(3,0)--(3,-3)--(-3,-3)--(-3,3);

        \foreach \i/\j in {-1.5/3, 0/1.5, 1.5/0, 3/-1.5}
            \draw (\i,\j) node[font={\Huge},fill=white] {\color{blue}$1$};
            
        \foreach \i/\j in {-3/0,0/-3}
            \draw (\i,\j) node[font={\Huge},fill=white] {\color{blue}$2$};
       
        \foreach \i/\j in {-3/-3,-3/3,0/3,0/0,3/0,3/-3}
            \node[shape=circle, draw=black, fill=black,font={\Huge}] at (\i,\j){$ $};
    }}

    \tikzset{chair2b/.pic=
{
        \draw[line width=0.7mm] (-3,3)--(0,3)--(0,0)--(3,0)--(3,-3)--(-3,-3)--(-3,3);

        \foreach \i/\j in {-3/-1.5, -3/1.5, -1.5/3, 0/1.5, 1.5/0, 3/-1.5, -1.5/-3, 1.5/-3}
            \draw (\i,\j) node[font={\Huge},fill=white] {\color{blue}$1$};
       
        \foreach \i/\j in {-3/-3,-3/3,0/3,0/0,3/0,3/-3,-3/0,0/-3}
            \node[shape=circle, draw=black, fill=black,font={\Huge}] at (\i,\j){$ $};
    }}

\tikzset{chair2c/.pic=
{
        \draw[line width=0.7mm] (-3,3)--(0,3)--(0,0)--(3,0)--(3,-3)--(-3,-3)--(-3,3);

        \foreach \i/\j in {-1.5/3, 0/1.5, 1.5/0, 3/-1.5}
            \draw (\i,\j) node[font={\Huge},fill=white] {\color{blue}$1$};
       
        \foreach \i/\j in {-3/-3,-3/3,0/3,0/0,3/0,3/-3}
            \node[shape=circle, draw=black, fill=black,font={\Huge}] at (\i,\j){$ $};
    }}

\begin{center}
    \setlength{\tabcolsep}{8pt}
    \begin{longtable}[c]{c c}
    \resizebox{200pt}{!}{
        \tikz{
        \pic[rotate=180] at (0,0) {chair2a};
        \pic[rotate=270] at (6,0) {chair2a};
         \pic[rotate=90] at (0,6) {chair2a};
        \pic[rotate=270] at (6,12) {chair2c};
        \pic[rotate=270] at (9,9) {chair2b};
        \pic[rotate=270] at (12,6) {chair2b};
        \pic[rotate=180] at (12,12) {chair2c};
        \pic[rotate=0,draw=red] at (6,6) {chair2a};
        
        \foreach \i/\j in {3/3}
        {\node[shape=circle, draw=red, fill=white, text=red,font={\Huge}] at (\i,\j) {8};
        }
        \foreach \i/\j in {9/6,6/9}
        {\node[shape=circle, draw=red, fill=white, text=red,font={\Huge}] at (\i,\j) {3};
        }
        \foreach \i/\j in {6/6}
        {\node[shape=circle, draw=red, fill=white, text=red,font={\Huge}] at (\i,\j) {2};
        \node[shape=circle,draw=red,fill=white,text=red,font={\Huge}] at (9,3) {5};
        \node[shape=circle,draw=red,fill=white,text=red,font={\LARGE}] at (3,9) {5 or 6};

        \node[font={\Huge}, text=black] at (6,-4.5) {Collar 1};
        }}}

         &  
    \resizebox{200pt}{!}{
        \tikz{
        \pic[rotate=180] at (0,0) {chair2a};
        \pic[rotate=270] at (6,0) {chair2a};
         \pic[rotate=90] at (0,6) {chair2a};
        \pic[rotate=90] at (6,12) {chair2b};
        \pic[rotate=90] at (9,9) {chair2b};
        \pic[rotate=90] at (12,6) {chair2c};
        \pic[rotate=180] at (12,12) {chair2c};
        \pic[rotate=0,draw=red] at (6,6) {chair2a};
        
        \foreach \i/\j in {3/3}
        {\node[shape=circle, draw=red, fill=white, text=red,font={\Huge}] at (\i,\j) {8};
        }
        \foreach \i/\j in {9/6,6/9}
        {\node[shape=circle, draw=red, fill=white, text=red,font={\Huge}] at (\i,\j) {3};
        }
        \foreach \i/\j in {6/6}
        {\node[shape=circle, draw=red, fill=white, text=red,font={\Huge}] at (\i,\j) {2};
        \node[shape=circle,draw=red,fill=white,text=red,font={\Huge}] at (3,9) {5};
        \node[shape=circle,draw=red,fill=white,text=red,font={\LARGE}] at (9,3) {5 or 6};

        \node[font={\Huge}, text=black] at (6,-4.5) {Collar 2};
        }}}
         \bigskip
         \\
    \resizebox{200pt}{!}{
        \tikz{
        \pic[rotate=180] at (0,0) {chair2a};
        \pic[rotate=270] at (6,0) {chair2a};
         \pic[rotate=90] at (0,6) {chair2a};
        \pic[rotate=270] at (6,12) {chair2c};
        \pic[rotate=0] at (9,9) {chair2b};
        \pic[rotate=90] at (12,6) {chair2c};
        \pic[rotate=0,draw=red] at (6,6) {chair2a};
        
        \foreach \i/\j in {3/3}
        {\node[shape=circle, draw=red, fill=white, text=red,font={\Huge}] at (\i,\j) {8};
        }
        \foreach \i/\j in {9/6,6/9}
        {\node[shape=circle, draw=red, fill=white, text=red,font={\Huge}] at (\i,\j) {3};
        }
        \foreach \i/\j in {6/6}
        {\node[shape=circle, draw=red, fill=white, text=red,font={\Huge}] at (\i,\j) {2};
        \node[shape=circle,draw=red,fill=white,text=red,font={\LARGE}] at (9,3) {5 or 6};
        \node[shape=circle,draw=red,fill=white,text=red,font={\LARGE}] at (3,9) {5 or 6};

        \node[font={\Huge}, text=black] at (6,-4.5) {Collar 3};
        }}}
         & 

    \resizebox{130pt}{!}{
        \tikz{
        \pic[rotate=0] at (0,0) {chair2c};
        \pic[rotate=0] at (6,6) {chair2b};
         \pic[rotate=270] at (0,6) {chair2c};
        \pic[rotate=90] at (6,0) {chair2c};
        
        \pic[rotate=0,draw=red] at (3,3) {chair2b};
        
        \foreach \i/\j in {}
        {\node[shape=circle, draw=red, fill=white, text=red,font={\Huge}] at (\i,\j) {4};
        }
        \foreach \i/\j in {3/0,0/3,3/6,6/3}
        {\node[shape=circle, draw=red, fill=white, text=red,font={\Huge}] at (\i,\j) {3};
        }
        \foreach \i/\j in {3/3,6/0,0/0,0/6}
        {\node[shape=circle, draw=red, fill=white, text=red,font={\Huge}] at (\i,\j) {2};

        \node[font={\Huge}, text=black] at (3,-4.5) {Collar 4};
        }}}
        \bigskip
        \\

    \resizebox{200pt}{!}{
        \tikz{
        \pic[rotate=0] at (0,0) {chair2b};
        \pic[rotate=90] at (3,-3) {chair2c};
        \pic[rotate=270] at (-3,3) {chair2c};
        \pic[rotate=270] at (3,9) {chair2c};
        \pic[rotate=0] at (6,6) {chair2b};
        \pic[rotate=90] at (9,3) {chair2c};

        \pic[rotate=0,draw=red] at (3,3) {chair2b};
        
        \foreach \i/\j in {0/6,6/0}
        {\node[shape=circle, draw=red, fill=white, text=red,font={\huge}] at (\i,\j) {$\geq$ 4};
        }
        \foreach \i/\j in {0/3,3/6,6/3,3/0}
        {\node[shape=circle, draw=red, fill=white, text=red,font={\Huge}] at (\i,\j) {3};
        }
        \foreach \i/\j in {0/0,3/3}
        {\node[shape=circle, draw=red, fill=white, text=red,font={\Huge}] at (\i,\j) {2};

        \node[font={\Huge}, text=black] at (3,-7.5) {Collar 5};
        }}}

        &
    \resizebox{200pt}{!}{
        \tikz{
        \pic[rotate=0] at (0,0) {chair2b};
        \pic[rotate=90] at (3,-3) {chair2c};
        \pic[rotate=270] at (-3,3) {chair2a};
        \pic[rotate=90] at (3,9) {chair2b};
        \pic[rotate=90] at (6,6) {chair2b};
        \pic[rotate=90] at (9,3) {chair2c};
        \pic[rotate=180] at (9,9) {chair2c};

        \pic[rotate=0,draw=red] at (3,3) {chair2b};
        
        \foreach \i/\j in {0/6}
        {\node[shape=circle, draw=red, fill=white, text=red,font={\Huge}] at (\i,\j) {5};
        }
        \foreach \i/\j in {0/3,3/6,6/3,3/0}
        {\node[shape=circle, draw=red, fill=white, text=red,font={\Huge}] at (\i,\j) {3};
        }
        \foreach \i/\j in {0/0,3/3}
        {\node[shape=circle, draw=red, fill=white, text=red,font={\Huge}] at (\i,\j) {2};
        \node[shape=circle,draw=red,fill=white,text=red,font={\huge}] at (6,0) {$\geq$ 4};

        \node[font={\Huge}, text=black] at (3,-7.5) {Collar 6};
        }}}\\

    \end{longtable}
\end{center}

    \begin{center}
    \resizebox{200pt}{!}{
       \tikz{
        \pic[rotate=0] at (0,0) {chair2b};
        \pic[rotate=90] at (3,-3) {chair2a};
        \pic[rotate=270] at (-3,3) {chair2c};
        \pic[rotate=270] at (3,9) {chair2c};
        \pic[rotate=270] at (6,6) {chair2b};
        \pic[rotate=270] at (9,3) {chair2b};
        \pic[rotate=180] at (9,9) {chair2c};

        \pic[rotate=0,draw=red] at (3,3) {chair2b};
        
        \foreach \i/\j in {6/0}
        {\node[shape=circle, draw=red, fill=white, text=red,font={\Huge}] at (\i,\j) {5};
        }
        \foreach \i/\j in {0/3,3/6,6/3,3/0}
        {\node[shape=circle, draw=red, fill=white, text=red,font={\Huge}] at (\i,\j) {3};
        }
        \foreach \i/\j in {0/0,3/3}
        {\node[shape=circle, draw=red, fill=white, text=red,font={\Huge}] at (\i,\j) {2};
        \node[shape=circle,draw=red,fill=white,text=red,font={\huge}] at (0,6) {$\geq$ 4};

        \node[font={\Huge}, text=black] at (3,-7.5) {Collar 7};
        }}}
        \end{center}
\end{proof}

\subsubsection{Vertex Placement 3}
The last vertex placement we will look at is where we place vertices where they will have a degree of 3 or higher. A short inspection shows that each prototile will have 4, 5, or 6 vertices, depending on the collar.

\begin{lemma} There are three prototiles in this tiling:
\tikzset{chair2aa/.pic=
{
        \draw[line width=0.7mm] (-3,3)--(0,3)--(0,0)--(3,0)--(3,-3)--(-3,-3)--(-3,3);

            
       
        \foreach \i/\j in {-3/-3,-3/3,0/3,3/0,3/-3}
            \node[shape=circle, draw=black, fill=black,font={\Huge}] at (\i,\j){$ $};
    }}

    \tikzset{chair2bb/.pic=
{
        \draw[line width=0.7mm] (-3,3)--(0,3)--(0,0)--(3,0)--(3,-3)--(-3,-3)--(-3,3);

       
        \foreach \i/\j in {0/3,3/0,-3/0,0/-3}
            \node[shape=circle, draw=black, fill=black,font={\Huge}] at (\i,\j){$ $};
    }}
    \tikzset{chair2cc/.pic=
{
        \draw[line width=0.7mm] (-3,3)--(0,3)--(0,0)--(3,0)--(3,-3)--(-3,-3)--(-3,3);

       
        \foreach \i/\j in {-3/3,0/3,3/0,3/-3,-3/0,0/-3}
            \node[shape=circle, draw=black, fill=black,font={\Huge}] at (\i,\j){$ $};
    }}
\begin{center}
    \setlength{\tabcolsep}{30pt}
    \begin{tabular}{c c c}
    \resizebox{50pt}{!}{
    \tikz[]{
    \pic{chair2aa}}}
         &  
    \resizebox{50pt}{!}{
    \tikz[]{
    \pic{chair2bb}}}         &  
    \resizebox{50pt}{!}{
    \tikz[]{
    \pic{chair2cc}}}\\ 
    \end{tabular}
\end{center}

The first prototile is at the center of Collar 1, Collar 2 and Collar 3, the second prototile is at the center of Collar 4 and the last is at the the center of Collar 5, Collar 6 and Collar 7.
\end{lemma}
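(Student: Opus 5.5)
The plan is to proceed exactly as for the analogous lemma in Vertex Placement 2. By Proposition~\ref{prop:chair-collared}, every tile of the chair tiling is the centre of one of Collars 1--7. Whether a boundary point of a tile is a vertex in Vertex Placement 3 depends only on the tiles meeting at that point. Every such point lies on the boundary of the central tile, so all tiles touching it appear in the collar. Hence the vertex set of the central tile is determined by its collar, and it suffices to inspect the seven collars.

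For each collar I would examine the eight lattice points of the central chair: the six corners and the two midpoints of the long edges. At each point I count the edges of the tiling graph meeting there, i.e. the degree of the point in the union of boundaries, and keep the point iff this degree is at least $3$. This count has already been done in the degree labels of Vertex Placement 1 (Collars 1--7). There, degrees $3$ and $4$ occur at the kept points and degree $2$ at the discarded ones. So I can read the answer off those pictures: keep exactly the points labelled $3$ or $4$.

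Reading this off gives three cases.

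\emph{Collars 1--3.} The kept points are the three outer corners (degree $4$) and the two points labelled $3$. The reentrant corner and both long-edge midpoints have degree $2$ and are discarded. This gives the first prototile with $5$ vertices.

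\emph{Collar 4.} Only the four points labelled $3$ survive: the two long-edge midpoints and the two short-edge ends adjacent to the notch. This gives the second prototile with $4$ vertices.

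\emph{Collars 5--7.} The kept points are the four points labelled $3$ together with the two corners labelled $4$. This gives the third prototile with $6$ vertices.

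Finally I would check that these three vertex sets agree, up to the symmetry of the chair, with the pictured prototiles. The step I expect to need care is the degree count at the long-edge midpoints and the outer corners. There I must confirm from the collar pictures whether a neighbouring tile's edge actually abuts the point, which decides between degree $2$ and $3$. The reflection symmetry of the chair across its diagonal should reduce this to checking one side of each collar.
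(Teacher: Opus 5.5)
Your proposal is correct and is essentially the paper's own argument, which inspects each of the seven collars and marks the boundary points where at least three edges meet; reading these off from the degree labels already drawn for Vertex Placement~1 is a legitimate shortcut, since those labels record exactly this geometric degree at the eight lattice points of the central chair. The one imprecision is your claim that every tile touching the central chair appears in its collar: the drawn collars (e.g.\ Collars 1--3 and 5--7) omit tiles that meet the central chair only at an outer corner, but such a tile differs from both of its edge-adjacent neighbours, so the degree at that corner is still determined by the collar.
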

\begin{proof}
We can check this for all collared tiles:

    \tikzset{chair1a/.pic=
{
        \draw[line width=0.7mm] (-3,3)--(0,3)--(0,0)--(3,0)--(3,-3)--(-3,-3)--(-3,3);
       
    }}

\begin{center}
    \hspace*{-3cm}
    \setlength{\tabcolsep}{8pt}
    \begin{longtable}[c]{c c c}
        
        \resizebox{100pt}{!}{
        \tikz{
        \pic[rotate=180] at (0,0) {chair1a};
        \pic[rotate=270] at (6,0) {chair1a};
         \pic[rotate=90] at (0,6) {chair1a};
        \pic[rotate=270] at (6,12) {chair1a};
        \pic[rotate=270] at (9,9) {chair1a};
        \pic[rotate=270] at (12,6) {chair1a};
        \pic[rotate=180] at (12,12) {chair1a};
        \pic[rotate=0,draw=red] at (6,6) {chair1a};
        
        \foreach \i/\j in {3/3,3/9,9/3}
        {\node[shape=circle, draw=red, fill=red, text=red,font={\Huge},minimum size=6mm] at (\i,\j) {};
        }
        \foreach \i/\j in {9/6,6/9}
        {\node[shape=circle, draw=red, fill=red, text=red,font={\Huge},minimum size=6mm] at (\i,\j) {};
        }

        \node[font={\Huge}, text=black] at (6,-5) {Collar 1};
        }}

&
        
        \resizebox{100pt}{!}{
        \tikz{
        \pic[rotate=180] at (0,0) {chair1a};
        \pic[rotate=270] at (6,0) {chair1a};
         \pic[rotate=90] at (0,6) {chair1a};
        \pic[rotate=90] at (6,12) {chair1a};
        \pic[rotate=90] at (9,9) {chair1a};
        \pic[rotate=90] at (12,6) {chair1a};
        \pic[rotate=180] at (12,12) {chair1a};
        \pic[rotate=0,draw=red] at (6,6) {chair1a};
        
        \foreach \i/\j in {3/3,3/9,9/3}
        {\node[shape=circle, draw=red, fill=red, text=red,font={\Huge},minimum size=6mm] at (\i,\j) {};
        }
        \foreach \i/\j in {9/6,6/9}
        {\node[shape=circle, draw=red, fill=red, text=red,font={\Huge},minimum size=6mm] at (\i,\j) {};
        }

        \node[font={\Huge}, text=black] at (6,-5) {Collar 2};
        }}

&
        
        \resizebox{100pt}{!}{
        \tikz{
        \pic[rotate=180] at (0,0) {chair1a};
        \pic[rotate=270] at (6,0) {chair1a};
         \pic[rotate=90] at (0,6) {chair1a};
        \pic[rotate=270] at (6,12) {chair1a};
        \pic[rotate=0] at (9,9) {chair1a};
        \pic[rotate=90] at (12,6) {chair1a};
        \pic[rotate=0,draw=red] at (6,6) {chair1a};
        
        \foreach \i/\j in {3/3,3/9,9/3}
        {\node[shape=circle, draw=red, fill=red, text=red,font={\Huge},minimum size=6mm] at (\i,\j) {};
        }
        \foreach \i/\j in {9/6,6/9}
        {\node[shape=circle, draw=red, fill=red, text=red,font={\Huge},minimum size=6mm] at (\i,\j) {};
        }

        \node[font={\Huge}, text=black] at (6,-5) {Collar 3};
        }}
       \bigskip \\

&
        
        \resizebox{64pt}{!}{
        \tikz{
        \pic[rotate=0] at (0,0) {chair1a};
        \pic[rotate=0] at (6,6) {chair1a};
         \pic[rotate=270] at (0,6) {chair1a};
        \pic[rotate=90] at (6,0) {chair1a};
        
        \pic[rotate=0,draw=red] at (3,3) {chair1a};
        
        \foreach \i/\j in {}
        {\node[shape=circle, draw=red, fill=red, text=red,font={\Huge},minimum size=6mm] at (\i,\j) {};
        }
        \foreach \i/\j in {3/0,0/3,3/6,6/3}
        {\node[shape=circle, draw=red, fill=red, text=red,font={\Huge},minimum size=6mm] at (\i,\j) {};
        }

        \node[font={\Huge}, text=black] at (3,-5) {Collar 4};
        }}
       \bigskip\\

        
        \resizebox{100pt}{!}{
        \tikz{
        \pic[rotate=0] at (0,0) {chair1a};
        \pic[rotate=90] at (3,-3) {chair1a};
         \pic[rotate=270] at (-3,3) {chair1a};
        \pic[rotate=270] at (3,9) {chair1a};
        \pic[rotate=0] at (6,6) {chair1a};
        \pic[rotate=90] at (9,3) {chair1a};

        \pic[rotate=0,draw=red] at (3,3) {chair1a};
        
        \foreach \i/\j in {0/6,6/0}
        {\node[shape=circle, draw=red, fill=red, text=red,font={\Huge},minimum size=6mm] at (\i,\j) {};
        }
        \foreach \i/\j in {0/3,3/6,6/3,3/0}
        {\node[shape=circle, draw=red, fill=red, text=red,font={\Huge},minimum size=6mm] at (\i,\j) {};
        }

        \node[font={\Huge}, text=black] at (3,-8) {Collar 5};
        }}

& 
        
        \resizebox{100pt}{!}{
        \tikz{
        \pic[rotate=0] at (0,0) {chair1a};
        \pic[rotate=90] at (3,-3) {chair1a};
         \pic[rotate=270] at (-3,3) {chair1a};
        \pic[rotate=90] at (3,9) {chair1a};
        \pic[rotate=90] at (6,6) {chair1a};
        \pic[rotate=90] at (9,3) {chair1a};
        \pic[rotate=180] at (9,9) {chair1a};

        \pic[rotate=0,draw=red] at (3,3) {chair1a};
        
        \foreach \i/\j in {0/6,6/0}
        {\node[shape=circle, draw=red, fill=red, text=red,font={\Huge},minimum size=6mm] at (\i,\j) {};
        }
        \foreach \i/\j in {0/3,3/6,6/3,3/0}
        {\node[shape=circle, draw=red, fill=red, text=red,font={\Huge},minimum size=6mm] at (\i,\j) {};
        }

        \node[font={\Huge}, text=black] at (3,-8) {Collar 6};
        }}

& 
        
        \resizebox{100pt}{!}{
        \tikz{
        \pic[rotate=0] at (0,0) {chair1a};
        \pic[rotate=90] at (3,-3) {chair1a};
         \pic[rotate=270] at (-3,3) {chair1a};
        \pic[rotate=270] at (3,9) {chair1a};
        \pic[rotate=270] at (6,6) {chair1a};
        \pic[rotate=270] at (9,3) {chair1a};
        \pic[rotate=180] at (9,9) {chair1a};

        \pic[rotate=0,draw=red] at (3,3) {chair1a};
        
        \foreach \i/\j in {0/6,6/0}
        {\node[shape=circle, draw=red, fill=red, text=red,font={\Huge},minimum size=6mm] at (\i,\j) {};
        }
        \foreach \i/\j in {0/3,3/6,6/3,3/0}
        {\node[shape=circle, draw=red, fill=red, text=red,font={\Huge},minimum size=6mm] at (\i,\j) {};
        }

        \node[font={\Huge}, text=black] at (3,-8) {Collar 7};
        }}\\

    \end{longtable}
    \hspace*{-3cm}
\end{center}
\end{proof}

\begin{proposition}
    For this vertex placement we will move up to the three possible level 1 supertiles. Assigning weights of 3 to all external edges and assigning weights to the internal edges as below gives us a 1-2-3 solution.

\begin{center}
    \setlength{\tabcolsep}{20pt}
    \begin{tabular}{c c c}
    \resizebox{80pt}{!}{
    \begin{tikzpicture}[scale=0.8]
        \draw[line width=2mm] (-6,-6)--(-6,6)--(0,6)--(0,0)--(6,0)--(6,-6)--(-6,-6);

        \draw[line width=0.7mm] (0,3)--(-3,3)--(-3,-3)--(3,-3)--(3,0);

        \draw[line width=0.7mm]
        (-6,0)--(-3,0);

        \draw[line width=0.7mm]
        (0,-3)--(0,-6);

        \foreach \i/\j in {-3/3,-3/-3,-4.5/0,0/-4.5}
            \draw (\i,\j) node[font={\Huge},fill=white] {\color{blue}$1$};
            
        \draw (3,-3) node[font={\Huge},fill=white]{\color{blue}$2$};

        \foreach \i/\j in {-3/6,-6/3,-6/-3,-3/-6,3/-6,6/-3,4.5/0,0/0,0/4.5}
            \draw (\i,\j) node[font={\Huge},fill=white] {\color{blue}$3$};
        
        \foreach \i/\j in {-6/-6,-6/0,-6/6,0/6,0/3,3/0,6/0,6/-6,0/-6,-3/0,0/-3}
            \node[shape=circle, draw=black, fill=black,font={\Huge}] at (\i,\j){$ $};
    \end{tikzpicture}}
         &  
    \resizebox{80pt}{!}{
    \begin{tikzpicture}[scale=0.8]
        \draw[line width=2mm] (-6,-6)--(-6,6)--(0,6)--(0,0)--(6,0)--(6,-6)--(-6,-6);

        \draw[line width=0.7mm] (0,3)--(-3,3)--(-3,-3)--(3,-3)--(3,0);

        \draw[line width=0.7mm]
        (-6,0)--(-3,0);

        \draw[line width=0.7mm]
        (0,-3)--(0,-6);

        \foreach \i/\j in {-3/3,-3/-3,-4.5/0,0/-4.5}
            \draw (\i,\j) node[font={\Huge},fill=white] {\color{blue}$1$};
            
        \draw (3,-3) node[font={\Huge},fill=white]{\color{blue}$2$};

        \foreach \i/\j in {-6/6,-1.5/6,-6/1.5,-6/-1.5,-6/-6,-1.5/-6,1.5/-6,6/-1.5,6/-6,4.5/0,0/0,0/4.5}
            \draw (\i,\j) node[font={\Huge},fill=white] {\color{blue}$3$};
        
        \foreach \i/\j in {-6/0,0/6,0/3,3/0,6/0,0/-6,-3/0,0/-3,-6/-3,-6/3,-3/6,6/-3,-3/-6,3/-6}
            \node[shape=circle, draw=black, fill=black,font={\Huge}] at (\i,\j){$ $};
            \end{tikzpicture}
    }

    &
    \resizebox{80pt}{!}{
    \begin{tikzpicture}[scale=0.8]
        \draw[line width=2mm] (-6,-6)--(-6,6)--(0,6)--(0,0)--(6,0)--(6,-6)--(-6,-6);

        \draw[line width=0.7mm] (0,3)--(-3,3)--(-3,-3)--(3,-3)--(3,0);

        \draw[line width=0.7mm]
        (-6,0)--(-3,0);

        \draw[line width=0.7mm]
        (0,-3)--(0,-6);

        \foreach \i/\j in {-3/3,-3/-3,-4.5/0,0/-4.5}
            \draw (\i,\j) node[font={\Huge},fill=white] {\color{blue}$1$};
            
        \draw (3,-3) node[font={\Huge},fill=white]{\color{blue}$2$};

        \foreach \i/\j in {-6/3,-3/6,-6/-1.5,-6/-6,-1.5/-6,3/-6,6/-3,4.5/0,0/0,0/4.5}
            \draw (\i,\j) node[font={\Huge},fill=white] {\color{blue}$3$};
        
        \foreach \i/\j in {-6/-3,-6/0,-6/6,0/6,0/3,3/0,6/0,6/-6,0/-6,-3/-6,-3/0,0/-3}
            \node[shape=circle, draw=black, fill=black,font={\Huge}] at (\i,\j){$ $};
            \end{tikzpicture}
    }
    \end{tabular}
\end{center}
\end{proposition}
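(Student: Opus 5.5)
The plan is to exploit the substitution structure. The chair tiling is the union of level 1 supertiles, each consisting of four chairs in the configuration drawn. In vertex placement 3, such a supertile occurs in only the three vertex patterns shown; these differ only in which vertices sit on the supertile boundary. Every internal edge lies inside one supertile and gets its weight from the picture. Every boundary edge of a supertile gets weight $3$, which is consistent whichever supertile it is read from. So the labelling is well defined on all of $G_\cT$. It then suffices to check $s_l(u)\neq s_l(v)$ for every edge $uv$. Since every edge lies in some supertile, this amounts to checking each edge of each of the three weighted supertiles. To do that, I will compute the weighted degree of every vertex of the supertile.

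\emph{Internal vertices.} Vertices in the interior of a supertile have all incident edges inside it, so their sums can be read directly from the pictures. They should be: $0+0$-type centre vertex $(0,0)$-region values; $(-3,0)$ and $(0,-3)$ have sum $1+1+1=3$; $(0,3)$ and $(3,0)$ have $1+3$ resp.\ $2+3$ together with the boundary edges. More precisely, I will list, for each non-boundary vertex, its sum, which depends only on the supertile.

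\emph{Boundary vertices.} For a vertex $v$ on the boundary of a supertile, write
\[
s_l(v)=3\,b(v)+c(v),
\]
where $b(v)$ is the number of supertile-boundary edges at $v$ (all of weight $3$) and $c(v)$ is the total weight of internal edges at $v$ from the supertiles containing $v$. In the three pictures, a boundary vertex receives at most one internal edge from its own supertile, of weight $1$, $2$ or $3$. The key claim is that boundary vertices have sums at least $9$ (typically $b(v)\ge 3$, or $b(v)=2$ plus internal edges of weight $3$). Internal vertices have sums at most $8$ or so; for example $(-3,3)$ has $1+1=2$ plus nothing. The only problem edges are then the internal edges joining two internal vertices, or joining an internal vertex to a boundary vertex. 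For the former I check the picture directly. For the latter I need a lower bound on boundary sums and an upper bound on internal sums.

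\emph{Bounding the boundary sums.} To pin down which vertices lie on supertile boundaries and their degrees, I will use the level 1 analogue of Proposition~\ref{prop:chair-collared}: the collared supertiles are the same seven collars, scaled by $2$, since the chair substitution maps collars to collars. I expect this to be the main obstacle. A boundary vertex of degree $2$ in the supertile's own boundary could, in principle, have small $b(v)$ unless neighbouring supertiles contribute. The vertex placement (degree $\ge 3$) guarantees an extra edge. That extra edge is either a boundary edge of weight $3$, giving $s\ge 9$, or an internal edge of a neighbour, giving at least $3+3+1=7$. In the second case I must compare against the internal neighbour sums case by case across the seven collars, just as in the previous propositions. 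There, edges marked ``$\geq 4$'' or ``5 or 6'' were resolved by inspecting which supertile variant is adjacent. I would carry out that inspection collar by collar and record the sums on the figures, concluding that no two adjacent vertices share a sum.
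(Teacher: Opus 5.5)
There is a genuine gap. The separation your argument rests on is false, and the case split built on it leaves out the edges that actually need checking.

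\textbf{The sums you compute are wrong.} In this vertex placement the points $(-3,\pm 3)$ and $(3,-3)$ are degree-$2$ corners, not vertices. So the edges $(0,3)$--$(-3,0)$, $(-3,0)$--$(0,-3)$ and $(0,-3)$--$(3,0)$ each pass around a corner, and $(-3,3)$ has no sum at all. The only interior vertices of a supertile are $(-3,0)$ and $(0,-3)$, with sums $1+1+1=3$ and $1+1+2=4$. With your values of $3$ and $3$, the interior edge joining them through $(-3,-3)$ would itself be a conflict.

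\textbf{No threshold separates boundary from interior.} Your claim that boundary sums are at least $9$ fails in every collar. The vertex $(0,3)$ always has degree $3$ and sum $3+3+1=7$, and $(3,0)$ always has $3+3+2=8$. The vertices $(-6,0)$ and $(0,-6)$ carry $8$ or $10$ depending on the collar. Side vertices such as $(-6,\pm 3)$ and $(\pm 3,-6)$ carry $7$ or $8$; $(0,6)$ and $(6,0)$ carry $10$; and shared corners carry $12$.

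\textbf{Your case split omits the hard edges.} Even if your bounds held, your list of ``problem edges'' (interior--interior and interior--boundary) leaves out the boundary--boundary edges, i.e.\ all the weight-$3$ external edges. Under your claimed bounds these would be the only edges not automatically fine. They are exactly where the checking is needed, for example $7$ next to $8$ and $10$, or $8$ next to $10$ and $12$.

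What actually proves the statement is the step you defer to the end. For each of the seven level-$1$ collars, you must:
\begin{enumerate}
\item determine which boundary points are vertices; this depends on where the neighbouring supertiles' internal edges meet the shared sides, which is why several supertile variants appear;
\item compute $3b(v)+c(v)$ at every vertex of the central supertile;
\item check every edge, including the external ones.
\end{enumerate}
That is precisely the paper's proof. Your proposal announces this inspection but does not carry it out. The reductions you offer before it are not valid shortcuts, so they do not reduce the work that remains.
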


\begin{proof}
    We will use the 7 collared tiles, with level 1 supertiles this time, to show that no conflicts exist.

\tikzset{chair3a/.pic=
{
        \draw[line width=2mm] (-6,-6)--(-6,6)--(0,6)--(0,0)--(6,0)--(6,-6)--(-6,-6);

        \draw[line width=0.7mm] (0,3)--(-3,3)--(-3,-3)--(3,-3)--(3,0);

        \draw[line width=0.7mm]
        (-6,0)--(-3,0);

        \draw[line width=0.7mm]
        (0,-3)--(0,-6);

        \foreach \i/\j in {-3/3,-3/-3,-4.5/0,0/-4.5}
            \draw (\i,\j) node[font={\Huge},fill=white] {\color{blue}$1$};
            
        \draw (3,-3) node[font={\Huge},fill=white]{\color{blue}$2$};

        \foreach \i/\j in {-3/6,-6/3,-6/-3,-3/-6,3/-6,6/-3,4.5/0,0/0,0/4.5}
            \draw (\i,\j) node[font={\Huge},fill=white] {\color{blue}$3$};
        
        \foreach \i/\j in {-6/-6,-6/0,-6/6,0/6,0/3,3/0,6/0,6/-6,0/-6,-3/0,0/-3}
            \node[shape=circle, draw=black, fill=black,font={\Huge}] at (\i,\j){$ $};
    }}

\tikzset{chair3b/.pic=
{
        \draw[line width=2mm] (-6,-6)--(-6,6)--(0,6)--(0,0)--(6,0)--(6,-6)--(-6,-6);

        \draw[line width=0.7mm] (0,3)--(-3,3)--(-3,-3)--(3,-3)--(3,0);

        \draw[line width=0.7mm]
        (-6,0)--(-3,0);

        \draw[line width=0.7mm]
        (0,-3)--(0,-6);

        \foreach \i/\j in {-3/3,-3/-3,-4.5/0,0/-4.5}
            \draw (\i,\j) node[font={\Huge},fill=white] {\color{blue}$1$};
            
        \draw (3,-3) node[font={\Huge},fill=white]{\color{blue}$2$};

        \foreach \i/\j in {-6/6,-1.5/6,-6/1.5,-6/-1.5,-6/-6,-1.5/-6,1.5/-6,6/-1.5,6/-6,4.5/0,0/0,0/4.5}
            \draw (\i,\j) node[font={\Huge},fill=white] {\color{blue}$3$};
        
        \foreach \i/\j in {-6/0,0/6,0/3,3/0,6/0,0/-6,-3/0,0/-3,-6/-3,-6/3,-3/6,6/-3,-3/-6,3/-6}
            \node[shape=circle, draw=black, fill=black,font={\Huge}] at (\i,\j){$ $};
    }}

\tikzset{chair3c/.pic=
{
        \draw[line width=2mm] (-6,-6)--(-6,6)--(0,6)--(0,0)--(6,0)--(6,-6)--(-6,-6);

        \draw[line width=0.7mm] (0,3)--(-3,3)--(-3,-3)--(3,-3)--(3,0);

        \draw[line width=0.7mm]
        (-6,0)--(-3,0);

        \draw[line width=0.7mm]
        (0,-3)--(0,-6);

        \foreach \i/\j in {-3/3,-3/-3,-4.5/0,0/-4.5}
            \draw (\i,\j) node[font={\Huge},fill=white] {\color{blue}$1$};
            
        \draw (3,-3) node[font={\Huge},fill=white]{\color{blue}$2$};

        \foreach \i/\j in {-6/3,-3/6,-6/-1.5,-6/-6,-1.5/-6,3/-6,6/-3,4.5/0,0/0,0/4.5}
            \draw (\i,\j) node[font={\Huge},fill=white] {\color{blue}$3$};
        
        \foreach \i/\j in {-6/-3,-6/0,-6/6,0/6,0/3,3/0,6/0,6/-6,0/-6,-3/-6,-3/0,0/-3}
            \node[shape=circle, draw=black, fill=black,font={\Huge}] at (\i,\j){$ $};
    }}

\tikzset{chair3d/.pic=
{
        \draw[line width=2mm] (-6,-6)--(-6,6)--(0,6)--(0,0)--(6,0)--(6,-6)--(-6,-6);

        \draw[line width=0.7mm] (0,3)--(-3,3)--(-3,-3)--(3,-3)--(3,0);

        \draw[line width=0.7mm]
        (-6,0)--(-3,0);

        \draw[line width=0.7mm]
        (0,-3)--(0,-6);

        \foreach \i/\j in {-3/3,-3/-3,-4.5/0,0/-4.5}
            \draw (\i,\j) node[font={\Huge},fill=white] {\color{blue}$1$};
            
        \draw (3,-3) node[font={\Huge},fill=white]{\color{blue}$2$};

        \foreach \i/\j in {-6/3,-3/6,0/4.5,0/0,4.5/0,3/-6,6/-3}
            \draw (\i,\j) node[font={\Huge},fill=white] {\color{blue}$3$};
        
        \foreach \i/\j in {-6/0,-6/6,0/6,0/3,3/0,6/0,6/-6,0/-6,-3/0,0/-3}
            \node[shape=circle, draw=black, fill=black,font={\Huge}] at (\i,\j){$ $};
    }}

\begin{center}
\hspace*{-5cm}
    \setlength{\tabcolsep}{5pt}
    \begin{longtable}[c]{cc}
    \resizebox{225pt}{!}{
        \tikz{
        \pic[rotate=180,scale=0.7] at (0,0) {chair3a};
        \pic[rotate=270,scale=0.7] at (8.4,0) {chair3a};
        \pic[rotate=90,scale=0.7] at (0,8.4) {chair3a};
        \pic[rotate=270,scale=0.7] at (8.4,16.8) {chair3d};
        \pic[rotate=270,scale=0.7] at (12.6,12.6) {chair3b};
        \pic[rotate=270,scale=0.7] at (16.8,8.4) {chair3c};
        \pic[rotate=180,scale=0.7] at (16.8,16.8) {chair3d};
        \pic[rotate=0,draw=red,scale=0.7] at (8.4,8.4) {chair3a};
        
        \foreach \i/\j in {4.2/4.2,4.2/12.6,12.6/4.2}
        {\node[shape=circle, draw=red, fill=white, text=red,font={\Huge}] at (\i,\j) {12};
        }
        \foreach \i/\j in {8.4/12.6,12.6/8.4}
        {\node[shape=circle, draw=red, fill=white, text=red,font={\Huge}] at (\i,\j) {10};
        }
        \foreach \i/\j in {4.2/8.4,8.4/4.2,10.5/8.4}
        {\node[shape=circle, draw=red, fill=white, text=red,font={\Huge}] at (\i,\j) {8};
        \node[shape=circle,draw=red,fill=white,text=red,font={\Huge}] at (8.4,10.5) {7};

        \node[shape=circle,draw=red,fill=white,text=red,font={\Huge}] at (6.3,8.4) {3};
        \node[shape=circle,draw=red,fill=white,text=red,font={\Huge}] at (8.4,6.3) {4};

        \node[font={\Huge}, text=black] at (8.4,-6) {Collar 1};
        }}}

        &
    \resizebox{225pt}{!}{
        \tikz{
        \pic[rotate=180,scale=0.7] at (0,0) {chair3a};
        \pic[rotate=270,scale=0.7] at (8.4,0) {chair3a};
        \pic[rotate=90,scale=0.7] at (0,8.4) {chair3a};
        \pic[rotate=90,scale=0.7] at (8.4,16.8) {chair3c};
        \pic[rotate=90,scale=0.7] at (12.6,12.6) {chair3b};
        \pic[rotate=90,scale=0.7] at (16.8,8.4) {chair3d};
        \pic[rotate=180,scale=0.7] at (16.8,16.8) {chair3d};
        \pic[rotate=0,draw=red,scale=0.7] at (8.4,8.4) {chair3a};
        
        \foreach \i/\j in {4.2/4.2,4.2/12.6,12.6/4.2}
        {\node[shape=circle, draw=red, fill=white, text=red,font={\Huge}] at (\i,\j) {12};
        }
        \foreach \i/\j in {8.4/12.6,12.6/8.4}
        {\node[shape=circle, draw=red, fill=white, text=red,font={\Huge}] at (\i,\j) {10};
        }
        \foreach \i/\j in {4.2/8.4,8.4/4.2,10.5/8.4}
        {\node[shape=circle, draw=red, fill=white, text=red,font={\Huge}] at (\i,\j) {8};
        \node[shape=circle,draw=red,fill=white,text=red,font={\Huge}] at (8.4,10.5) {7};

        \node[shape=circle,draw=red,fill=white,text=red,font={\Huge}] at (6.3,8.4) {3};
        \node[shape=circle,draw=red,fill=white,text=red,font={\Huge}] at (8.4,6.3) {4};

        \node[font={\Huge}, text=black] at (8.4,-6) {Collar 2};
        }}}
        \bigskip
        \\
    \resizebox{225pt}{!}{
        \tikz{
        \pic[rotate=180,scale=0.7] at (0,0) {chair3a};
        \pic[rotate=270,scale=0.7] at (8.4,0) {chair3a};
        \pic[rotate=90,scale=0.7] at (0,8.4) {chair3a};
        \pic[rotate=270,scale=0.7] at (8.4,16.8) {chair3d};
        \pic[rotate=0,scale=0.7] at (12.6,12.6) {chair3b};
        \pic[rotate=90,scale=0.7] at (16.8,8.4) {chair3d};
        \pic[rotate=0,draw=red,scale=0.7] at (8.4,8.4) {chair3a};
        
        \foreach \i/\j in {4.2/4.2,4.2/12.6,12.6/4.2}
        {\node[shape=circle, draw=red, fill=white, text=red,font={\Huge}] at (\i,\j) {12};
        }
        \foreach \i/\j in {8.4/12.6,12.6/8.4}
        {\node[shape=circle, draw=red, fill=white, text=red,font={\Huge}] at (\i,\j) {10};
        }
        \foreach \i/\j in {4.2/8.4,8.4/4.2,10.5/8.4}
        {\node[shape=circle, draw=red, fill=white, text=red,font={\Huge}] at (\i,\j) {8};
        \node[shape=circle,draw=red,fill=white,text=red,font={\Huge}] at (8.4,10.5) {7};

        \node[shape=circle,draw=red,fill=white,text=red,font={\Huge}] at (6.3,8.4) {3};
        \node[shape=circle,draw=red,fill=white,text=red,font={\Huge}] at (8.4,6.3) {4};

        \node[font={\Huge}, text=black] at (8.4,-6) {Collar 3};
        }}}

&
    \resizebox{155pt}{!}{
        \tikz{
        \pic[rotate=0,scale=0.7] at (0,0) {chair3d};
        \pic[rotate=270,scale=0.7] at (0,8.4) {chair3d};
        \pic[rotate=90,scale=0.7] at (8.4,0) {chair3d};
        \pic[rotate=0,scale=0.7] at (8.4,8.4) {chair3c};
        \pic[rotate=0,draw=red,scale=0.7] at (4.2,4.2) {chair3b};
        
        \foreach \i/\j in {0/4.2,8.4/4.2,4.2/8.4,4.2/0}
        {\node[shape=circle, draw=red, fill=white, text=red,font={\Huge}] at (\i,\j) {10};
        }
        \foreach \i/\j in {0/6.3,2.1/0,8.4/2.4,6.3/4.2}
        {\node[shape=circle, draw=red, fill=white, text=red,font={\Huge}] at (\i,\j) {8};
        }
        \foreach \i/\j in {2.4/8.4,4.2/6.3,6.3/0,0/2.1}
        {\node[shape=circle, draw=red, fill=white, text=red,font={\Huge}] at (\i,\j) {7};
        }
        \node[shape=circle,draw=red,fill=white,text=red,font={\Huge}] at (2.1,4.2) {3};
        \node[shape=circle,draw=red,fill=white,text=red,font={\Huge}] at (4.2,2.1) {4};

        \node[font={\Huge}, text=black] at (4.2,-6) {Collar 4};
        }}
        \bigskip
        \\

    \resizebox{225pt}{!}{
        \tikz{
        \pic[rotate=0,scale=0.7] at (0,0) {chair3b};
        \pic[rotate=270,scale=0.7] at (-4.2,4.2) {chair3d};
        \pic[rotate=90,scale=0.7] at (4.2,-4.2) {chair3d};
        \pic[rotate=0,scale=0.7] at (8.4,8.4) {chair3b};
        \pic[rotate=270,scale=0.7] at (4.2,12.6) {chair3d};
        \pic[rotate=90,scale=0.7] at (12.6,4.2) {chair3d};
        \pic[rotate=0,draw=red,scale=0.7] at (4.2,4.2) {chair3c};
        
        \foreach \i/\j in {0/8.4,8.4/0}
        {\node[shape=circle, draw=red, fill=white, text=red,font={\Huge}] at (\i,\j) {12};
        }
        \foreach \i/\j in {4.2/8.4,8.4/4.2,4.2/0,0/4.2}
        {\node[shape=circle, draw=red, fill=white, text=red,font={\Huge}] at (\i,\j) {10};
        }
        \foreach \i/\j in {6.3/4.2,2.1/0}
        {\node[shape=circle, draw=red, fill=white, text=red,font={\Huge}] at (\i,\j) {8};
        }
        \foreach \i/\j in {4.2/6.3,0/2.1}
        {\node[shape=circle, draw=red, fill=white, text=red,font={\Huge}] at (\i,\j) {7};
        }
        \node[shape=circle,draw=red,fill=white,text=red,font={\Huge}] at (2.1,4.2) {3};
        \node[shape=circle,draw=red,fill=white,text=red,font={\Huge}] at (4.2,2.1) {4};

        \node[font={\Huge}, text=black] at (4.2,-10.2) {Collar 5};
        }}

&
    \resizebox{225pt}{!}{
        \tikz{
        \pic[rotate=0,scale=0.7] at (0,0) {chair3b};
        \pic[rotate=270,scale=0.7] at (-4.2,4.2) {chair3d};
        \pic[rotate=90,scale=0.7] at (4.2,-4.2) {chair3d};
        \pic[rotate=90,scale=0.7] at (8.4,8.4) {chair3b};
        \pic[rotate=90,scale=0.7] at (4.2,12.6) {chair3c};
        \pic[rotate=90,scale=0.7] at (12.6,4.2) {chair3d};
        \pic[rotate=180,scale=0.7] at (12.6,12.6) {chair3d};
        \pic[rotate=0,draw=red,scale=0.7] at (4.2,4.2) {chair3c};
        
        \foreach \i/\j in {0/8.4,8.4/0}
        {\node[shape=circle, draw=red, fill=white, text=red,font={\Huge}] at (\i,\j) {12};
        }
        \foreach \i/\j in {4.2/8.4,8.4/4.2,4.2/0,0/4.2}
        {\node[shape=circle, draw=red, fill=white, text=red,font={\Huge}] at (\i,\j) {10};
        }
        \foreach \i/\j in {6.3/4.2,2.1/0}
        {\node[shape=circle, draw=red, fill=white, text=red,font={\Huge}] at (\i,\j) {8};
        }
        \foreach \i/\j in {4.2/6.3,0/2.1}
        {\node[shape=circle, draw=red, fill=white, text=red,font={\Huge}] at (\i,\j) {7};
        }
        \node[shape=circle,draw=red,fill=white,text=red,font={\Huge}] at (2.1,4.2) {3};
        \node[shape=circle,draw=red,fill=white,text=red,font={\Huge}] at (4.2,2.1) {4};

        \node[font={\Huge}, text=black] at (4.2,-10.2) {Collar 6};
        }}\\
    \end{longtable}
    \hspace*{-5cm}
\end{center}

\begin{center}
    \resizebox{225pt}{!}{
        \tikz{
        \pic[rotate=0,scale=0.7] at (0,0) {chair3b};
        \pic[rotate=270,scale=0.7] at (-4.2,4.2) {chair3d};
        \pic[rotate=90,scale=0.7] at (4.2,-4.2) {chair3d};
        \pic[rotate=270,scale=0.7] at (8.4,8.4) {chair3b};
        \pic[rotate=270,scale=0.7] at (4.2,12.6) {chair3d};
        \pic[rotate=270,scale=0.7] at (12.6,4.2) {chair3c};
        \pic[rotate=180,scale=0.7] at (12.6,12.6) {chair3d};
        \pic[rotate=0,draw=red,scale=0.7] at (4.2,4.2) {chair3c};
        
        \foreach \i/\j in {0/8.4,8.4/0}
        {\node[shape=circle, draw=red, fill=white, text=red,font={\Huge}] at (\i,\j) {12};
        }
        \foreach \i/\j in {4.2/8.4,8.4/4.2,4.2/0,0/4.2}
        {\node[shape=circle, draw=red, fill=white, text=red,font={\Huge}] at (\i,\j) {10};
        }
        \foreach \i/\j in {6.3/4.2,2.1/0}
        {\node[shape=circle, draw=red, fill=white, text=red,font={\Huge}] at (\i,\j) {8};
        }
        \foreach \i/\j in {4.2/6.3,0/2.1}
        {\node[shape=circle, draw=red, fill=white, text=red,font={\Huge}] at (\i,\j) {7};
        }
        \node[shape=circle,draw=red,fill=white,text=red,font={\Huge}] at (2.1,4.2) {3};
        \node[shape=circle,draw=red,fill=white,text=red,font={\Huge}] at (4.2,2.1) {4};

        \node[font={\Huge}, text=black] at (4.2,-10.2) {Collar 7};
        }}
\end{center}
\end{proof}

\subsection{Non-Pinwheel}

\begin{proposition}
    Consider the non-periodic tiling of the plane with the \href{https://tilings.math.uni-bielefeld.de/substitution/non-pinwheel/}{non-pinwheel} in \cite{TilingEncyclopedia}. We identify a rectangle supertile consisting of 10 right triangles with side lengths of 1, 2, and $\sqrt{5}$. As the supertiles are joined, the colour of the 4 corners uniquely identifies the rotation of the supertile.

    \tikzset{nonpin/.pic=
{
\draw[line width=3mm] (0,0)--(0,4.48)--(2.24,4.48)--(2.24,0)--(0,0);
        \draw[line width=0.8mm] (0,4.48)--(2.24,0);
        \draw[line width=0.8mm] (0.45,3.59)--(2.24,4.48);
        \draw[line width=0.8mm] (0,0)--(1.79,0.89);
        \draw[line width=0.8mm] (0.89,0.45)--(0,2.24);
        \draw[line width=0.8mm] (1.35,4.03)--(2.24,2.24);
        \draw[line width=0.8mm] (0,2.24)--(0.89,2.69);
        \draw[line width=0.8mm] (0.89,2.69)--(0.89,0.45);
        \draw[line width=0.8mm] (1.35,1.79)--(2.24,2.24);
        \draw[line width=0.8mm] (1.35,4.03)--(1.35,1.79);

        \foreach \i/\j in {0/1.12,0/3.36,1.12/4.48,2.24/3.36,2.24/1.12,1.12/0}
        {
        \draw (\i,\j) node[shape=circle,fill=white,font={\Huge}] {\color{black}$3$};
        }
        \draw (0.45,0.23) node[shape=circle,fill=white,font={\Huge}] {\color{blue}$3$};
        \draw (0.23,4.03) node[shape=circle,fill=white,font={\Huge}] {\color{blue}$3$};
        \draw (1.8,3.14) node[shape=circle,fill=white,font={\Huge}] {\color{blue}$2$};
        \draw (1.8,2.02) node[shape=circle,fill=white,font={\Huge}] {\color{blue}$1$};
        \draw (1.8,4.26) node[shape=circle,fill=white,font={\Huge}] {\color{red}$2$};
        \draw (2.02,0.45) node[shape=circle,fill=white,font={\Huge}] {\color{red}$2$};
        \draw (0.45,2.47) node[shape=circle,fill=white,font={\Huge}] {\color{red}$1$};
        \draw (0.45,1.35) node[shape=circle,fill=white,font={\Huge}] {\color{red}$1$};
        \draw (1.34,0.67) node[shape=circle,fill=white,font={\Huge}] {\color{black}$2$};
        \draw (1.57,1.34) node[shape=circle,fill=white,font={\Huge}]  {\color{black}$2$};
        \foreach \i/\j in {1.12/2.2, 0.67/3.14, 0.9/3.81, 1.35/2.91, 0.89/1.57}
        {
        \draw (\i,\j) node[shape=circle,fill=white,font={\Huge}]  {\color{black}$1$};
        }

        \node[shape=circle, draw=blue, fill=blue,minimum size=7mm] at (0,4.48){};
        \node[shape=circle, draw=blue, fill=blue,minimum size=7mm] at (2.24,2.24){};
        \node[shape=circle, draw=blue, fill=blue,minimum size=7mm] at (0,0){};
        \node[shape=circle, draw=red, fill=red,minimum size=7mm] at (0,2.24){};
        \node[shape=circle, draw=red, fill=red,minimum size=7mm] at (2.24,4.48){};
        \node[shape=circle, draw=red, fill=red,minimum size=7mm] at (2.24,0){};

        \node[shape=circle, draw=black, fill=white, text=black,font={\Huge}] at (0.89,0.45) {$7$};
        \node[shape=circle, draw=black, fill=white, text=black,font={\Huge}] at (1.35,1.79) {$5$};
        \node[shape=circle, draw=black, fill=white, text=black,font={\Huge}] at (1.79,0.89) {$6$};
        \node[shape=circle, draw=black, fill=white, text=black,font={\Huge}] at (0.89,2.69) {$4$};
        \node[shape=circle, draw=black, fill=white, text=black,font={\Huge}] at (0.45,3.59) {$5$};
        \node[shape=circle, draw=black, fill=white, text=black,font={\Huge}] at (1.35,4.03) {$6$};
}}

\begin{center}
    \resizebox{176pt}{!}{
    \tikz{
    \pic[scale=3.2] {nonpin}
    }
    \tikz{
   \pic[scale=3.2,rotate=180]{nonpin}
    }
    }
\end{center}

Assign all external supertile edges a weight of 3. There are 6 vertices along the external supertile and they are thus 2-colourable. The bipartitle rectangle supertiles are investigated in \cite{ColouringsofAperiodicTilings}. Colour the vertices blue and red alternating. External vertices are incident to either 1 or 2 internal supertile edges. Assign weights to the internal edges such that the sum of internal edges incident to a red vertex is 2 and 3 to a blue vertex. Weight the remaining internal edges as above.
This provides a solution to the 1-2-3 problem. 
\end{proposition}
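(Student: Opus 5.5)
The plan is to split the vertices of the tiling into two kinds, those interior to a rectangle supertile and those on supertile boundaries, and check the colouring condition separately on each kind and on the edges joining them. First I will use the fact, taken from the structure of the non-pinwheel substitution, that the tiling decomposes into copies of the rectangle supertile made of 10 right triangles, meeting edge to edge along supertile sides. The external vertices are the six boundary vertices (four corners and two midpoints of the long sides). Adjacent boundary vertices along supertile edges then form cycles that are even, since each rectangle contributes an even number of boundary edges between consecutive corners. I will invoke the 2-colourability of these boundary vertices from \cite{ColouringsofAperiodicTilings}. The claim that the corner colours identify the rotation lets me fix the blue/red pattern consistently on every supertile, so that the internal labelling shown is well defined up to the supertile's rotation.

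Next I compute the sum at each external vertex. All external edges have weight $3$, so a boundary vertex $v$ has sum
\[ s(v)=3\,d_{\mathrm{ext}}(v)+\sum(\text{internal edges at } v), \]
where, by the prescription, the internal contribution summed over all supertiles containing $v$ is a fixed amount per incidence: $2$ per incidence at a red vertex and $3$ at a blue one. The key claim is then that the external degree $d_{\mathrm{ext}}(v)$ together with the colour forces any two adjacent boundary vertices to differ. Adjacent boundary vertices have opposite colours. So I would enumerate the possible vertex stars at supertile boundary vertices. There are finitely many by finite local complexity; they are obtained from the level-1 collars of the supertile, exactly as in Proposition~\ref{prop:chair-collared} for the chair. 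For each star I then check that the sum takes a red value or a blue value from disjoint residue classes. Since the internal contribution per incident supertile is $2$ versus $3$ on top of multiples of $3$, I expect red sums $\equiv 2k$ and blue sums $\equiv 3k \equiv 0 \pmod 3$ to separate them whenever the number $k$ of incident supertiles with an internal edge is not divisible by $3$. The cases where it is divisible by $3$ need direct checking.

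Then I handle the interior vertices. Their sums are $7,5,6,4,5,6$ as displayed, independent of neighbours. I check that adjacent interior vertices differ; the two $5$'s and two $6$'s must be non-adjacent, which is read off the figure. I also need each interior sum to avoid the sums of the boundary vertices it touches. Boundary sums are at least $3\cdot 2+2=8$, since every boundary vertex lies on at least two external edges, so they exceed every interior sum of at most $7$. This separation makes the interior–boundary edges automatic.

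The main obstacle is the boundary enumeration. I must verify that in every vertex star occurring in the tiling, red and blue neighbours along supertile edges receive different totals, since degrees in the supertile graph vary (for example at midpoints of long sides where a corner of another supertile sits). I would first list all vertex configurations of the supertile tiling from its collars, compute the totals, and fall back on a case-by-case picture check like the chair proofs if the residue argument does not cover everything.
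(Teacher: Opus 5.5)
\textbf{Verdict: there is a genuine gap, at the boundary vertices.} Your treatment of the interior vertices is fine. The sums $7,5,6,4,5,6$ do not depend on neighbours, the two $5$'s and the two $6$'s are indeed non-adjacent, and your bound putting boundary sums above $7$ is correct.

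The gap is at the boundary vertices, which carry the whole content of the proposition. You treat $d_{\mathrm{ext}}(v)$ and the number $k$ of supertiles at $v$ as independent unknowns. You then propose a mod $3$ separation, which you note yourself fails when $3\mid k$: a red vertex lying in three supertiles has sum $\equiv 0 \pmod 3$, like every blue vertex. After that you defer to an enumeration of vertex stars that you never carry out. As written, nothing in your argument rules out a red sum $3\cdot 4+2\cdot 3=18$ coinciding with a blue sum $3\cdot 4+3\cdot 2=18$.

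\textbf{The missing idea: the two parameters are equal.} Take a boundary vertex $v$.
\begin{itemize}
\item Every supertile containing $v$ has exactly two of its boundary edges at $v$: its two sides if $v$ is a corner, or the two halves of a long side if $v$ is a midpoint.
\item Every external edge at $v$ is shared by exactly two supertiles.
\end{itemize}
Counting incidences gives $2k=2d_{\mathrm{ext}}(v)$, so $k=d_{\mathrm{ext}}(v)=:d$. The supertile angles at $v$ are $90^\circ$ or $180^\circ$, so $d\in\{2,3,4\}$.

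Hence a red vertex has sum $3d+2d=5d\in\{10,15,20\}$, and a blue vertex has sum $3d+3d=6d\in\{12,18,24\}$. These sets are disjoint, so alternation of colours settles the boundary case with no enumeration at all. This is exactly the paper's argument.

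\textbf{A minor point on bipartiteness.} Your reason for the boundary graph being bipartite is incorrect: a short side is a single edge, not an even number of edges. The correct reason is that every face of the supertile-boundary graph is a $6$-cycle, so every cycle in it is even. Alternatively, simply rely on the citation.
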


\begin{proof}
    When combined, each external vertex will have 2, 3, or 4 external supertile edges and an equivalent number of groups of internal edges. Let $d$ represent that number (i.e. 2, 3, or 4). All external edges are weighted 3 and each group of internal edges incident to a red vertex sum up to 2. Thus for all the red vertices $3d + 2d = 5d$, so all red vertices will have a sum that is a multiple of 5, specifically 10, 15, or 20. While the groups of internal edges incident to the blue vertices sum up to 3. Therefore, $3d + 3d = 6d$, so all blue vertices will have a sum that is a multiple of 6, specifically 12, 18, or 24. Since they are alternating and the internal vertices have sums 7 or less the result follows.
\end{proof}

\tikzset{nonpinblue/.pic=
{
\draw[line width=3mm] (0,0)--(0,4.48)--(2.24,4.48)--(2.24,0)--(0,0);
        \draw[line width=0.8mm] (0,4.48)--(2.24,0);
        \draw[line width=0.8mm] (0.45,3.59)--(2.24,4.48);
        \draw[line width=0.8mm] (0,0)--(1.79,0.89);
        \draw[line width=0.8mm] (0.89,0.45)--(0,2.24);
        \draw[line width=0.8mm] (1.35,4.03)--(2.24,2.24);
        \draw[line width=0.8mm] (0,2.24)--(0.89,2.69);
        \draw[line width=0.8mm] (0.89,2.69)--(0.89,0.45);
        \draw[line width=0.8mm] (1.35,1.79)--(2.24,2.24);
        \draw[line width=0.8mm] (1.35,4.03)--(1.35,1.79);

        \foreach \i/\j in {0/1.12,0/3.36,1.12/4.48,2.24/3.36,2.24/1.12,1.12/0}
        {
        \draw (\i,\j) node[shape=circle,fill=white,font={\Huge}] {\color{black}$3$};
        }
        \draw (0.45,0.23) node[shape=circle,fill=white,font={\Huge}] {\color{blue}$3$};
        \draw (0.23,4.03) node[shape=circle,fill=white,font={\Huge}] {\color{blue}$3$};
        \draw (1.8,3.14) node[shape=circle,fill=white,font={\Huge}] {\color{blue}$2$};
        \draw (1.8,2.02) node[shape=circle,fill=white,font={\Huge}] {\color{blue}$1$};
        \draw (1.8,4.26) node[shape=circle,fill=white,font={\Huge}] {\color{red}$2$};
        \draw (2.02,0.45) node[shape=circle,fill=white,font={\Huge}] {\color{red}$2$};
        \draw (0.45,2.47) node[shape=circle,fill=white,font={\Huge}] {\color{red}$1$};
        \draw (0.45,1.35) node[shape=circle,fill=white,font={\Huge}] {\color{red}$1$};
        \draw (1.34,0.67) node[shape=circle,fill=white,font={\Huge}] {\color{black}$2$};
        \draw (1.57,1.34) node[shape=circle,fill=white,font={\Huge}]  {\color{black}$2$};
        \foreach \i/\j in {1.12/2.2, 0.67/3.14, 0.9/3.81, 1.35/2.91, 0.89/1.57}
        {
        \draw (\i,\j) node[shape=circle,fill=white,font={\Huge}]  {\color{black}$1$};
        }

        \node[shape=circle, draw=blue, fill=blue,minimum size=7mm] at (0,4.48){};
        \node[shape=circle, draw=blue, fill=blue,minimum size=7mm] at (2.24,2.24){};
        \node[shape=circle, draw=blue, fill=blue,minimum size=7mm] at (0,0){};
        \node[shape=circle, draw=red, fill=red,minimum size=7mm] at (0,2.24){};
        \node[shape=circle, draw=red, fill=red,minimum size=7mm] at (2.24,4.48){};
        \node[shape=circle, draw=red, fill=red,minimum size=7mm] at (2.24,0){};

        \node[shape=circle, draw=black, fill=white, text=black,font={\Huge}] at (0.89,0.45) {$7$};
        \node[shape=circle, draw=black, fill=white, text=black,font={\Huge}] at (1.35,1.79) {$5$};
        \node[shape=circle, draw=black, fill=white, text=black,font={\Huge}] at (1.79,0.89) {$6$};
        \node[shape=circle, draw=black, fill=white, text=black,font={\Huge}] at (0.89,2.69) {$4$};
        \node[shape=circle, draw=black, fill=white, text=black,font={\Huge}] at (0.45,3.59) {$5$};
        \node[shape=circle, draw=black, fill=white, text=black,font={\Huge}] at (1.35,4.03) {$6$};

    \coordinate (-A) at (0,0);
    \coordinate (-B) at (0,4.48);
    \coordinate (-C) at (2.24,4.48);
    \coordinate (-D) at (2.24,0);
    \coordinate (-E) at (0,2.24);
    \coordinate (-F) at (2.24,2.24);
}}

 \tikzset{nonpinred/.pic=
{
\draw[line width=3mm] (0,0)--(0,4.48)--(2.24,4.48)--(2.24,0)--(0,0);
        \draw[line width=0.8mm] (0,4.48)--(2.24,0);
        \draw[line width=0.8mm] (0.45,3.59)--(2.24,4.48);
        \draw[line width=0.8mm] (0,0)--(1.79,0.89);
        \draw[line width=0.8mm] (0.89,0.45)--(0,2.24);
        \draw[line width=0.8mm] (1.35,4.03)--(2.24,2.24);
        \draw[line width=0.8mm] (0,2.24)--(0.89,2.69);
        \draw[line width=0.8mm] (0.89,2.69)--(0.89,0.45);
        \draw[line width=0.8mm] (1.35,1.79)--(2.24,2.24);
        \draw[line width=0.8mm] (1.35,4.03)--(1.35,1.79);

        \foreach \i/\j in {0/1.12,0/3.36,1.12/4.48,2.24/3.36,2.24/1.12,1.12/0}
        {
        \draw (\i,\j) node[shape=circle,fill=white,font={\Huge}] {\color{black}$3$};
        }
        \draw (0.45,0.23) node[shape=circle,fill=white,font={\Huge}] {\color{red}$2$};
        \draw (0.23,4.03) node[shape=circle,fill=white,font={\Huge}] {\color{red}$2$};
        \draw (1.8,3.14) node[shape=circle,fill=white,font={\Huge}] {\color{red}$1$};
        \draw (1.8,2.02) node[shape=circle,fill=white,font={\Huge}] {\color{red}$1$};
        \draw (1.8,4.26) node[shape=circle,fill=white,font={\Huge}] {\color{blue}$3$};
        \draw (2.02,0.45) node[shape=circle,fill=white,font={\Huge}] {\color{blue}$3$};
        \draw (0.45,2.47) node[shape=circle,fill=white,font={\Huge}] {\color{blue}$1$};
        \draw (0.45,1.35) node[shape=circle,fill=white,font={\Huge}] {\color{blue}$2$};
        \draw (1.34,0.67) node[shape=circle,fill=white,font={\Huge}] {\color{black}$1$};
         \draw (0.67,3.14) node[shape=circle,fill=white,font={\Huge}] {\color{black}$2$};
         \draw (0.9,3.81) node[shape=circle,fill=white,font={\Huge}] {\color{black}$2$};
        \draw (1.57,1.34) node[shape=circle,fill=white,font={\Huge}]  {\color{black}$1$};
        \foreach \i/\j in {1.12/2.2, 1.35/2.91, 0.89/1.57}
        {
        \draw (\i,\j) node[shape=circle,fill=white,font={\Huge}]  {\color{black}$1$};
        }

        \node[shape=circle, draw=red, fill=red,minimum size=7mm] at (0,4.48){};
        \node[shape=circle, draw=red, fill=red,minimum size=7mm] at (2.24,2.24){};
        \node[shape=circle, draw=red, fill=red,minimum size=7mm] at (0,0){};
        \node[shape=circle, draw=blue, fill=blue,minimum size=7mm] at (0,2.24){};
        \node[shape=circle, draw=blue, fill=blue,minimum size=7mm] at (2.24,4.48){};
        \node[shape=circle, draw=blue, fill=blue,minimum size=7mm] at (2.24,0){};

        \node[shape=circle, draw=black, fill=white, text=black,font={\Huge}] at (0.89,0.45) {$6$};
        \node[shape=circle, draw=black, fill=white, text=black,font={\Huge}] at (1.35,1.79) {$4$};
        \node[shape=circle, draw=black, fill=white, text=black,font={\Huge}] at (1.79,0.89) {$5$};
        \node[shape=circle, draw=black, fill=white, text=black,font={\Huge}] at (0.89,2.69) {$5$};
        \node[shape=circle, draw=black, fill=white, text=black,font={\Huge}] at (0.45,3.59) {$6$};
        \node[shape=circle, draw=black, fill=white, text=black,font={\Huge}] at (1.35,4.03) {$7$};

    \coordinate (-A) at (0,0);
    \coordinate (-B) at (0,4.48);
    \coordinate (-C) at (2.24,4.48);
    \coordinate (-D) at (2.24,0);
    \coordinate (-E) at (0,2.24);
    \coordinate (-F) at (2.24,2.24);
}}

\begin{center}

\resizebox{400pt}{!}{
    \tikz{
    \clip (-25,-1) rectangle (14,28);
    \pic (T1) [scale=3,rotate=-26.565] at (0,0) {nonpinblue};
    \pic (T2) [scale=3,rotate=63.43] at (T1-A) {nonpinblue};
    \pic (T3) [scale=3,rotate=243.43] at (T1-F) {nonpinblue};
    \pic (T4) [scale=3,rotate=63.43] at (T3-B) {nonpinblue};
    \pic (T5) [scale=3,rotate=63.43] at (T4-F) {nonpinblue};
    \pic (T6) [scale=3,rotate=63.43] at (T2-B) {nonpinblue};
    \pic (T7) [scale=3,rotate=243.43] at (T2-E) {nonpinred};
    \pic (T8) [scale=3,rotate=243.43] at (T6-E) {nonpinred};
    \pic (T9) [scale=3,rotate=-206.565] at (T8-A) {nonpinred};
    \pic (T10) [scale=3,rotate=243.43] at (T8-D) {nonpinblue};
    \pic (T11) [scale=3,rotate=243.43] at (T4-D) {nonpinred};
    \pic (T12) [scale=3,rotate=63.43] at (T11-E) {nonpinblue};
    \pic (T13) [scale=3,rotate=63.43] at (T1-E) {nonpinred};
    \pic (T14) [scale=3,rotate=-26.565] at (T13-F) {nonpinred};
    \pic (T15) [scale=3,rotate=63.43] at (T13-F) {nonpinred};
    \pic (T16) [scale=3,rotate=63.43] at (T2-C) {nonpinred};
    \pic (T17) [scale=3,rotate=-206.565] at (T16-C) {nonpinblue};
    \pic (T18) [scale=3,rotate=63.43] at (T16-F) {nonpinred};
    \pic (T19) [scale=3,rotate=-26.565] at (T18-C) {nonpinblue};
    \pic (T20) [scale=3,rotate=-206.565] at (T19-E) {nonpinred};
    \pic (T21) [scale=3,rotate=-26.565] at (T20-D) {nonpinblue};
    \pic (T22) [scale=3,rotate=-206.565] at (T21-B) {nonpinblue};
    \pic (T23) [scale=3,rotate=63.43] at (T21-C) {nonpinred};
    \pic (T24) [scale=3,rotate=-26.565] at (T21-F) {nonpinblue};
    \pic (T25) [scale=3,rotate=-26.565] at (T18-F) {nonpinred};
    \pic (T26) [scale=3,rotate=63.43] at (T15-D) {nonpinblue};
    \pic (T27) [scale=3,rotate=63.43] at (T26-D) {nonpinred};
    \pic (T28) [scale=3,rotate=63.43] at (T27-F) {nonpinred};
    \pic (T29) [scale=3,rotate=63.43] at (T28-D) {nonpinblue};
    \pic (T30) [scale=3,rotate=63.43] at (T29-F) {nonpinblue};
    \pic (T31) [scale=3,rotate=63.43] at (T5-D) {nonpinred};
    \pic (T32) [scale=3,rotate=63.43] at (T12-D) {nonpinred};
    \pic (T33) [scale=3,rotate=63.43] at (T31-F) {nonpinred};
    \pic (T34) [scale=3,rotate=63.43] at (T32-F) {nonpinred};
    \pic (T35) [scale=3,rotate=63.43] at (T34-D) {nonpinblue};
    \pic (T36) [scale=3,rotate=-26.565] at (T33-F) {nonpinred};
    \pic (T37) [scale=3,rotate=63.43] at (T33-F) {nonpinred};
    \pic (T38) [scale=3,rotate=63.43] at (T37-D) {nonpinblue};
    \pic (T39) [scale=3,rotate=63.43] at (T38-F) {nonpinblue};
    \pic (T40) [scale=3,rotate=63.43] at (T39-D) {nonpinred};
    \pic (T41) [scale=3,rotate=-26.565] at (T38-F) {nonpinblue};
    \pic (T42) [scale=3,rotate=63.43] at (T35-D) {nonpinred};
    \pic (T43) [scale=3,rotate=63.43] at (T42-F) {nonpinred};
    \pic (T44) [scale=3,rotate=-26.565] at (T41-F) {nonpinblue};

    \foreach \i/\j in {T9/-F,T10/-E,T3/-E,T15/-F,T21/-E,T29/-E,T34/-F,T37/-F}
    {
    \node[shape=circle, draw=red, fill=red!60, text=black,font={\Huge}] at (\i\j) {10};
    }

    \foreach \i/\j in {T9/-A,T8/-B,T11/-A,T17/-E,T2/-D,T1/-C,T32/-F,T5/-C,T13/-F,T16/-F,T31/-F,T18/-F,T20/-A,T25/-F,T21/-C,T24/-C,T29/-D,T28/-A,T33/-F,T36/-F,T39/-D,T41/-C,T44/-D,T43/-A}
    {
    \node[shape=circle, draw=red, fill=red!60, text=black,font={\Huge}] at (\i\j) {15};
    }

    \foreach \i/\j in {T7/-B,T17/-D,T6/-D,T12/-C,T14/-B,T19/-C,T36/-B,T39/-C}
    {
    \node[shape=circle, draw=red, fill=red!60, text=black,font={\Huge}] at (\i\j) {20};
    }

    \foreach \i/\j in {T6/-F,T2/-F,T12/-F,T5/-F,T19/-F,T26/-F,T39/-F,T42/-E}
    {
    \node[shape=circle, draw=blue, fill=blue!30, text=black,font={\Huge}] at (\i\j) {12};
    }

    \foreach \i/\j in {T9/-E,T8/-E,T7/-E,T3/-A,T11/-E,T17/-A,T4/-F,T32/-C,T13/-C,T14/-E,T14/-C,T18/-C,T19/-B,T25/-C,T21/-B,T24/-F,T30/-A,T27/-D,T37/-D,T36/-C,T38/-F,T41/-F}
    {
    \node[shape=circle, draw=blue, fill=blue!30, text=black,font={\Huge}] at (\i\j) {18};
    }

    \foreach \i/\j in {T9/-D,T10/-B,T3/-B,T1/-B,T15/-C,T20/-D,T29/-A,T33/-D}
    {
    \node[shape=circle, draw=blue, fill=blue!30, text=black,font={\Huge}] at (\i\j) {24};
    }
    }}
    \end{center}

\subsection{Pinwheel}

\begin{proposition}
    Consider the non-periodic tiling of the plane with the \href{https://tilings.math.uni-bielefeld.de/substitution/pinwheel/}{pinwheel} in \cite{TilingEncyclopedia}. We identify the following four supertiles whose external vertices are 2-colourable. The bipartite rectangular supertiles are investigated in \cite{ColouringsofAperiodicTilings}.

    \tikzset{rectangleblue/.pic=
{
        \draw[line width=3mm] (0,0)--(0,4.48)--(2.24,4.48)--(2.24,0)--(0,0);
        \draw[line width=0.6mm] (0,4.48)--(2.24,0);
        \draw[line width=0.6mm] (0.45,3.59)--(2.24,4.48);
        \draw[line width=0.6mm] (0,0)--(1.79,0.89);
        \draw[line width=0.6mm] (0.89,0.45)--(0,2.24);
        \draw[line width=0.6mm] (1.35,4.03)--(2.24,2.24);
        \draw[line width=0.6mm] (0,2.24)--(0.89,2.69);
        \draw[line width=0.6mm] (0,2.24)--(1.79,0.89);
        \draw[line width=0.6mm] (1.35,1.79)--(2.24,2.24);
        \draw[line width=0.6mm] (0.45,3.59)--(2.24,2.24);

        \draw(0,1.12) node[shape=circle,fill=white,font={\Huge}]{\color{black}$3$};
        \draw(0,3.36) node[shape=circle,fill=white,font={\Huge}]{\color{black}$3$};
        \draw(1.12,4.48) node[shape=circle,fill=white,font={\Huge}]{\color{black}$3$};
        \draw(2.24,3.36) node[shape=circle,fill=white,font={\Huge}]{\color{black}$3$};
        \draw(2.24,1.12) node[shape=circle,fill=white,font={\Huge}]{\color{black}$3$};
        \draw(1.12,0) node[shape=circle,fill=white,font={\Huge}]{\color{black}$3$};
        \draw (0.225,4.04) node[shape=circle,fill=white,font={\Huge}] {\color{blue}$2$};
        \draw (0.45,0.225) node[shape=circle,fill=white,font={\Huge}] {\color{blue}$2$};
        \draw (1.8,3.14) node[shape=circle,fill=white,font={\Huge}] {\color{blue}$2$};
        \draw (1.35,2.92) node[shape=circle,fill=white,font={\Huge}] {\color{blue}$2$};
        \draw (1.8,2.02) node[shape=circle,fill=white,font={\Huge}] {\color{blue}$2$};
        \draw (1.8,4.26) node[shape=circle,fill=white,font={\Huge}] {\color{red}$1$};
        \draw (2.02,0.45) node[shape=circle,fill=white,font={\Huge}] {\color{red}$1$};
        \draw (0.45,2.47) node[shape=circle,fill=white,font={\Huge}] {\color{red}$1$};
        \draw (0.9,1.57) node[shape=circle,fill=white,font={\Huge}] {\color{red}$1$};
        \draw (0.45,1.35) node[shape=circle,fill=white,font={\Huge}] {\color{red}$1$};
        \draw (1.34,0.67) node[shape=circle,fill=white,font={\Huge}] {\color{black}$1$};
        \draw (1.57,1.34) node[shape=circle,fill=white,font={\Huge}] {\color{black}$3$};
        \draw (1.12,2.24) node[shape=circle,fill=white,font={\Huge}] {\color{black}$2$};
        \draw (0.67,3.14) node[shape=circle,fill=white,font={\Huge}] {\color{black}$1$};
        \draw (0.9,3.81) node[shape=circle,fill=white,font={\Huge}] {\color{black}$1$};

        \node[shape=circle, draw=blue, fill=blue,minimum size=7mm] at (0,4.48){};
        \node[shape=circle, draw=blue, fill=blue,minimum size=7mm] at (2.24,2.24){};
        \node[shape=circle, draw=blue, fill=blue,minimum size=7mm] at (0,0){};
        \node[shape=circle, draw=red, fill=red,minimum size=7mm] at (0,2.24){};
        \node[shape=circle, draw=red, fill=red,minimum size=7mm] at (2.24,4.48){};
        \node[shape=circle, draw=red, fill=red,minimum size=7mm] at (2.24,0){};

        \node[shape=circle, draw=black, fill=white, text=black, font={\Huge}] at (0.89,0.45) {$4$};
        \node[shape=circle, draw=black, fill=white, text=black, font={\Huge}] at (1.35,1.79) {$7$};
        \node[shape=circle, draw=black, fill=white, text=black, font={\Huge}] at (1.79,0.89) {$6$};
        \node[shape=circle, draw=black, fill=white, text=black, font={\Huge}] at (0.89,2.69) {$4$};
        \node[shape=circle, draw=black, fill=white, text=black, font={\Huge}] at (0.45,3.59) {$6$};
        \node[shape=circle, draw=black, fill=white, text=black, font={\Huge}] at (1.35,4.03) {$4$};
}}

\tikzset{kiteblue/.pic={
        \draw[line width=3mm] (0,0)--(0,4.48)--(3.58,1.79)--(2.24,0)--(0,0);
        \draw[line width=0.6mm] (0,4.48)--(2.24,0);
        \draw[line width=0.6mm] (0,2.24)--(1.78,3.14);
        \draw[line width=0.6mm] (0,0)--(3.58,1.79);
        \draw[line width=0.6mm] (0,2.24)--(0.9,0.45);
        \draw[line width=0.6mm] (1.78,3.14)--(2.69,1.35);
        \draw[line width=0.6mm] (0,2.24)--(1.79,0.9);
        \draw[line width=0.6mm] (1.78,3.14)--(1.79,0.9);

        \draw (1.34,1.8) node[shape=circle,fill=white,font={\Huge}] {\color{black}$1$};
        \draw (1.35,0.675) node[shape=circle,fill=white,font={\Huge}] {\color{black}$1$};
        \draw (2.24,1.13) node[shape=circle,fill=white,font={\Huge}] {\color{black}$1$};
        
        \draw(0,1.12) 
        node[shape=circle,fill=white,font={\Huge}]{\color{black}$3$};
        \draw(0,3.36) node[shape=circle,fill=white,font={\Huge}]{\color{black}$3$};
        \draw(1.12,0) node[shape=circle,fill=white,font={\Huge}]{\color{black}$3$};
        \draw(2.91,0.895) node[shape=circle,fill=white,font={\Huge}]{\color{black}$3$};
        \draw(0.895,3.81) node[shape=circle,fill=white,font={\Huge}]{\color{black}$3$};
        \draw(2.68,2.465) node[shape=circle,fill=white,font={\Huge}]{\color{black}$3$};
        \draw (0.45,3.59) node[shape=circle,fill=white,font={\Huge}] {\color{blue}$2$};
        \draw (0.45,0.225) node[shape=circle,fill=white,font={\Huge}] {\color{blue}$2$};
        \draw (3.14,1.58) node[shape=circle,fill=white,font={\Huge}] {\color{blue}$2$};
        \draw (2.02,0.45) node[shape=circle,fill=white,font={\Huge}] {\color{red}$1$};
        \draw (0.45,2.47) node[shape=circle,fill=white,font={\Huge}] {\color{red}$1$};
        \draw (1.34,2.92) node[shape=circle,fill=white,font={\Huge}] {\color{red}$1$};
        \draw (0.45,1.35) node[shape=circle,fill=white,font={\Huge}] {\color{red}$1$};
        \draw (0.895,1.57) node[shape=circle,fill=white,font={\Huge}] {\color{red}$1$};
        \draw (1.79,2.02) node[shape=circle,fill=white,font={\Huge}] {\color{red}$1$};
        \draw (2.24,2.25) node[shape=circle,fill=white,font={\Huge}] {\color{red}$1$};

        \node[shape=circle, draw=black, fill=white, text=black, font={\Huge}] at (0.9,0.45) {$4$};
        \node[shape=circle, draw=black, fill=white, text=black, font={\Huge}] at (2.69,1.35) {$4$};
        \node[shape=circle, draw=blue, fill=blue,minimum size=7mm] at (0,4.48){};
        \node[shape=circle, draw=blue, fill=blue,minimum size=7mm] at (0,0){};
        \node[shape=circle, draw=blue, fill=blue,minimum size=7mm] at (3.58,1.79){};
        \node[shape=circle, draw=red, fill=red,minimum size=7mm] at (0,2.24){};
        \node[shape=circle, draw=red, fill=red,minimum size=7mm] at (1.78,3.14){};
        \node[shape=circle, draw=red, fill=red,minimum size=7mm] at (2.24,0){};
        \node[shape=circle, draw=black, fill=white, text=black, font={\Huge}] at (0.89,2.69) {$5$};
        \node[shape=circle, draw=black, fill=white, text=black, font={\Huge}] at (1.79,0.9) {$6$};
}
}

\tikzset{kitered/.pic={
        \draw[line width=3mm] (0,0)--(0,4.48)--(3.58,1.79)--(2.24,0)--(0,0);
        \draw[line width=0.6mm] (0,4.48)--(2.24,0);
        \draw[line width=0.6mm] (0,2.24)--(1.78,3.14);
        \draw[line width=0.6mm] (0,0)--(3.58,1.79);
        \draw[line width=0.6mm] (0,2.24)--(0.9,0.45);
        \draw[line width=0.6mm] (1.78,3.14)--(2.69,1.35);
        \draw[line width=0.6mm] (0,2.24)--(1.79,0.9);
        \draw[line width=0.6mm] (1.78,3.14)--(1.79,0.9);

        \draw (1.34,1.8) node[shape=circle,fill=white,font={\Huge}] {\color{black}$1$};
        \draw (1.35,0.675) node[shape=circle,fill=white,font={\Huge}] {\color{black}$1$};
        \draw (2.24,1.13) node[shape=circle,fill=white,font={\Huge}] {\color{black}$1$};
        
        \draw(0,1.12) 
        node[shape=circle,fill=white,font={\Huge}]{\color{black}$3$};
        \draw(0,3.36) node[shape=circle,fill=white,font={\Huge}]{\color{black}$3$};
        \draw(1.12,0) node[shape=circle,fill=white,font={\Huge}]{\color{black}$3$};
        \draw(2.91,0.895) node[shape=circle,fill=white,font={\Huge}]{\color{black}$3$};
        \draw(0.895,3.81) node[shape=circle,fill=white,font={\Huge}]{\color{black}$3$};
        \draw(2.68,2.465) node[shape=circle,fill=white,font={\Huge}]{\color{black}$3$};
        \draw (0.45,3.59) node[shape=circle,fill=white,font={\Huge}] {\color{red}$1$};
        \draw (0.45,0.225) node[shape=circle,fill=white,font={\Huge}] {\color{red}$1$};
        \draw (3.14,1.58) node[shape=circle,fill=white,font={\Huge}] {\color{red}$1$};
        \draw (2.02,0.45) node[shape=circle,fill=white,font={\Huge}] {\color{blue}$2$};
        \draw (0.45,2.47) node[shape=circle,fill=white,font={\Huge}] {\color{blue}$2$};
        \draw (1.34,2.92) node[shape=circle,fill=white,font={\Huge}] {\color{blue}$2$};
        \draw (0.45,1.35) node[shape=circle,fill=white,font={\Huge}] {\color{blue}$2$};
        \draw (0.895,1.57) node[shape=circle,fill=white,font={\Huge}] {\color{blue}$2$};
        \draw (1.79,2.02) node[shape=circle,fill=white,font={\Huge}] {\color{blue}$2$};
        \draw (2.24,2.25) node[shape=circle,fill=white,font={\Huge}] {\color{blue}$2$};

        \node[shape=circle, draw=black, fill=white, text=black, font={\Huge}] at (0.9,0.45) {$4$};
        \node[shape=circle, draw=black, fill=white, text=black, font={\Huge}] at (2.69,1.35) {$4$};
        \node[shape=circle, draw=red, fill=red,minimum size=7mm] at (0,4.48){};
        \node[shape=circle, draw=red, fill=red,minimum size=7mm] at (0,0){};
        \node[shape=circle, draw=red, fill=red,minimum size=7mm] at (3.58,1.79){};
        \node[shape=circle, draw=blue, fill=blue,minimum size=7mm] at (0,2.24){};
        \node[shape=circle, draw=blue, fill=blue,minimum size=7mm] at (1.78,3.14){};
        \node[shape=circle, draw=blue, fill=blue,minimum size=7mm] at (2.24,0){};
        \node[shape=circle, draw=black, fill=white, text=black, font={\Huge}] at (0.89,2.69) {$6$};
        \node[shape=circle, draw=black, fill=white, text=black, font={\Huge}] at (1.79,0.9) {$9$};
}
}

\begin{center}
    \setlength{\tabcolsep}{25pt}
    \begin{tabular}{c c}
    \resizebox{80pt}{!}{
    \tikz[]{
    \pic[scale=3] {rectangleblue};
    }}
         &  
     \resizebox{80pt}{!}{
    \tikz[]{
    \pic[xscale=-3,yscale=3] {rectangleblue};
    }}\\
     \resizebox{120pt}{!}{
    \tikz[]{
    \pic[scale=3] {kiteblue};
    }}
         & 
   \resizebox{120pt}{!}{
    \tikz[]{
    \pic[scale=3] {kitered};
    }}      
    \end{tabular}
\end{center}

 Assign all external supertile edges a weight of 3. Internal edges that are incident to a red vertex are weighted 1 and internal edges that are incident to a blue vertex are weighted 2. Assign the remaining internal edge weights as above. This provides a solution to the 1-2-3 problem.
\end{proposition}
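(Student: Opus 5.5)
The argument follows the Non-Pinwheel proof: separate the vertices into those lying on supertile boundaries and those interior to a single supertile, and show that the two classes, and the two colour classes among boundary vertices, produce weighted degrees in disjoint arithmetic ranges.

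\emph{Step 1: the partition.} We use the fact, taken from \cite{ColouringsofAperiodicTilings} and the substitution structure, that the pinwheel tiling partitions into translated and rotated copies of the four pictured supertiles: the rectangle, its mirror image, and the two kites. We also use that the red/blue 2-colouring of their external vertices is consistent across shared boundaries, so the external vertex set of the whole tiling is properly 2-coloured along supertile boundary edges.

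\emph{Step 2: external vertices.} Let $v$ be an external vertex lying on $d$ supertiles. Each supertile contributes exactly two external (boundary) edges at $v$, so $v$ has $2d$ boundary edges, possibly collinear through $v$. Here the pinwheel differs from the non-pinwheel, since a vertex may sit in the middle of a supertile edge, with angle $\pi$. Each boundary edge carries weight $3$, giving a contribution of $6d$ counting edge multiplicity correctly. We must check per supertile that the boundary edges at each external vertex are genuinely distinct graph edges: adjacent supertiles share boundary edges, so the count is really the number of distinct boundary edges at $v$, call it $b$, with $b \ge 2$.

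\emph{Step 3: internal edges at external vertices.} Internal edges at a red vertex weigh $1$ and at a blue vertex weigh $2$. We check, supertile by supertile, how many internal edges meet each corner. The figures show each corner has at most two or three internal edges, and mid-edge vertices have several. The target is to show that red vertices have sums $3b+k$ and blue vertices have sums $3b+2k'$, lying in disjoint residue classes or ranges. If a clean formula like the $5d$ versus $6d$ of the Non-Pinwheel is unavailable, we instead tabulate the possible vertex stars at external vertices. There are finitely many, by FLC of the pinwheel modulo rotation, read off from the vertex atlas. For each adjacent red--blue pair we verify that the sums differ.

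\emph{Step 4: internal vertices.} The pictures record internal vertex sums of $4$ to $9$. We check that every internal vertex adjacent to an external vertex has a smaller sum than the external vertex; external sums are at least $3\cdot 2+\dots$, and the star with the most internal edges should be checked. Adjacent internal vertices must also differ, which is read directly from the labelled figures.

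The main obstacle is Steps 2--3. Pinwheel supertiles meet in infinitely many orientations and with non-corner-to-corner adjacencies, so the external vertex stars are more varied than in the non-pinwheel case. In particular the internal-vertex sum $9$ in the red kite may collide with a low external sum. I would first try to prove a uniform inequality: every external vertex sum is at least $10$, and red and blue external sums lie in disjoint ranges. Failing that, I would enumerate external vertex configurations of the level-1 pinwheel.
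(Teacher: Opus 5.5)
Your Steps 2--3, which you correctly identify as the heart of the matter, are never carried out. You leave the separation of red and blue external sums either to a hoped-for ``clean formula'' or to an enumeration of vertex stars that you do not perform. This is a genuine gap, because the colour rule alone does not separate the classes. A red vertex with $b$ boundary edges and $k$ internal edges has sum $3b+k$, and a blue one has sum $3b'+2k'$. Without control of the total degree these can coincide: red with $b=4$, $k=6$ and blue with $b'=2$, $k'=6$ both give $18$.

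The missing idea, on which the paper's short proof rests, is that \emph{every external supertile vertex has degree exactly $8$} in the assembled tiling, with between $2$ and $4$ boundary edges. It follows that:
\begin{itemize}
\item a red vertex has sum $3b+(8-b)=8+2b\le 16$;
\item a blue vertex has sum $3b+2(8-b)=16+b\ge 18$.
\end{itemize}
So red and blue never collide. Moreover every external sum is at least $12$, which exceeds the largest internal sum $9$, so your Step~4 worry about the red kite disappears.

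The degree-$8$ fact needs no enumeration and does not depend on the infinitely many orientations. In each of the four supertiles, a corner is split by exactly one internal edge into two acute triangle angles (exactly one, not ``at most two or three''). A mid-side vertex is split by exactly three internal edges into four acute angles. No right angle of a small triangle sits at an external vertex. Hence the full angle $2\pi$ at an assembled external vertex is a sum of copies of $\arctan\frac12$ and $\arctan 2=\frac{\pi}{2}-\arctan\frac12$. Since $\arctan\frac12$ is an irrational multiple of $\pi$, there must be exactly four of each. That gives eight wedges and degree $8$.

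Equivalently, with $c$ corners and $m$ mid-sides meeting at the vertex, $2c+4m=8$, so $c+2m=4$ and $b=c+m\in\{2,3,4\}$.

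Also correct your count in Step~2. Each boundary edge at $v$ is shared by two of the $d$ supertiles, so there are $d$ distinct boundary edges contributing $3d$, not $6d$.
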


\begin{proof}
When combined, each external supertile vertex has a degree of 8.

At each red vertex, the weight on external supertile edges is 3 and the weight on the internal edges is 1. Since there are at most 4 external supertile edges, at each red vertex the sum is at most 
\[
4 \cdot 3+4\cdot 1=16 \,.
\]

At each blue vertex, the weight on external supertile edges is 3 and the weight on the internal edges is 2. Since there are at least 2 external supertile edges, at each blue vertex the sum is at least
\[
2 \cdot 3+6\cdot 2=18 \,.
\]
It follows that red vertices and blue vertices will never have the same sum.
\end{proof}

\begin{remark} Considering the cases of 2,3, or 4 supertile edges at each point, a simple case by case analysis shows that the sum at each red vertex is 12 or 14 or 16, while the sum at each blue vertex is 18 or 19 or 20.
\end{remark}

\tikzset{rectangleblue/.pic=
{
        \draw[line width=3mm] (-1.12,-2.24)--(-1.12,2.24)--(1.12,2.24)--(1.12,-2.24)--(-1.12,-2.24);
        \draw[line width=0.6mm] (-1.12,2.24)--(1.12,-2.24);
        \draw[line width=0.6mm] (-0.67,1.35)--(1.12,2.24);
        \draw[line width=0.6mm] (0.23,1.79)--(1.12,0)--(0.23,-0.45);
        \draw[line width=0.6mm] (-0.67,1.35)--(1.12,0);
        \draw[line width=0.6mm] (-1.12,-2.24)--(0.67,-1.35);
        \draw[line width=0.6mm] (-0.23,-1.79)--(-1.12,0)--(-0.23,0.45);
        \draw[line width=0.6mm] (-1.12,0)--(0.67,-1.35);

        \foreach \i/\j in {-1.12/-1.12,-1.12/1.12,0/2.24,1.12/1.12,1.12/-1.12,0/-2.24}
        {
        \draw(\i,\j) node[shape=circle,fill=white,font={\Huge}]{\color{black}$3$};
        }
        
        \draw (-0.9,1.8) node[shape=circle,fill=white,font={\Huge}] {\color{blue}$2$};
        \draw (-0.675,-2.02) node[shape=circle,fill=white,font={\Huge}] {\color{blue}$2$};
        \draw (0.675,0.9) node[shape=circle,fill=white,font={\Huge}] {\color{blue}$2$};
        \draw (0.675,-0.225) node[shape=circle,fill=white,font={\Huge}] {\color{blue}$2$};
        \draw (0.675,2.02) node[shape=circle,fill=white,font={\Huge}] {\color{red}$1$};
        \draw (0.9,-1.8) node[shape=circle,fill=white,font={\Huge}] {\color{red}$1$};
        \draw (-0.675,0.225) node[shape=circle,fill=white,font={\Huge}] {\color{red}$1$};
        \draw (-0.675,-0.9) node[shape=circle,fill=white,font={\Huge}] {\color{red}$1$};
        \draw (0.22,-1.57) node[shape=circle,fill=white,font={\Huge}] {\color{black}$1$};
        \draw (0.45,-0.9) node[shape=circle,fill=white,font={\Huge}]  {\color{black}$3$};
        \draw (-0.22,1.57) node[shape=circle,fill=white,font={\Huge}]  {\color{black}$1$};
        \draw (0.23,0.67) node[shape=circle,fill=white,font={\Huge}]  {\color{blue}$2$};
        \draw (-0.45,0.9) node[shape=circle,fill=white,font={\Huge}]  {\color{black}$1$};
        \draw (-0.23,-0.67) node[shape=circle,fill=white,font={\Huge}]  {\color{red}$1$};
        \draw (0,0) node[shape=circle,fill=white,font={\Huge}]  {\color{black}$2$};

        \node[shape=circle, draw=blue, fill=blue,minimum size=7mm] at (-1.12,2.24){};
        \node[shape=circle, draw=blue, fill=blue,minimum size=7mm] at (1.12,0){};
        \node[shape=circle, draw=blue, fill=blue,minimum size=7mm] at (-1.12,-2.24){};
        \node[shape=circle, draw=red, fill=red,minimum size=7mm] at (-1.12,0){};
        \node[shape=circle, draw=red, fill=red,minimum size=7mm] at (1.12,2.24){};
        \node[shape=circle, draw=red, fill=red,minimum size=7mm] at (1.12,-2.24){};

        \node[shape=circle, draw=black, fill=white, text=black, font={\Huge}] at (-0.23,-1.79) {4};
        \node[shape=circle, draw=black, fill=white, text=black, font={\Huge}] at (0.23,-0.45) {7};
        \node[shape=circle, draw=black, fill=white, text=black, font={\Huge}] at (0.67,-1.35) {6};
        \node[shape=circle, draw=black, fill=white, text=black, font={\Huge}] at (0.23,1.79) {4};
        \node[shape=circle, draw=black, fill=white, text=black, font={\Huge}] at (-0.67,1.35) {6};
        \node[shape=circle, draw=black, fill=white, text=black, font={\Huge}] at (-0.23,0.45) {4};
}}

\tikzset{kiteblue/.pic={
        \draw[line width=3mm] (-1.12,-2.24)--(-1.12,2.24)--(2.46,-0.45)--(1.12,-2.24)--(-1.12,-2.24);
        \draw[line width=0.6mm] (-1.12,2.24)--(1.12,-2.24);
        \draw[line width=0.6mm] (-1.12,0)--(0.67,0.9);
        \draw[line width=0.6mm] (-1.12,-2.24)--(2.46,-0.45);
        \draw[line width=0.6mm] (-0.22,-1.79)--(-1.12,0)--(0.67,-1.34);
        \draw[line width=0.6mm] (1.57,-0.89)--(0.67,0.9)--(0.67,-1.34);

        \foreach \i/\j in {0.225/-1.565,1.12/-1.115,0.22/-0.445}
        {
        \draw (\i,\j) node[shape=circle,fill=white,font={\Huge}] {\color{black}$1$};
        }

        \foreach \i/\j in {-1.12/-1.12,-1.12/1.12,0/-2.24,
        -0.225/1.57,1.57/0.225,1.79/-1.35}
        {
        \draw(\i,\j) 
        node[shape=circle,fill=white,font={\Huge}]{\color{black}$3$};
        }
        
        \foreach \i/\j in {-0.675/1.345,-0.67/-2.015,2.015/-0.67}
        {
        \draw (\i,\j) node[shape=circle,fill=white,font={\Huge}] {\color{blue}$2$};
        }
        \foreach \i/\j in {-0.67/-0.895,-0.225/-0.67,-0.675/0.225,
        0/0.675,0.67/-0.22,1.12/0,0.895/-1.79}
        {
        \draw (\i,\j) node[shape=circle,fill=white,font={\Huge}] {\color{red}$1$};
        }

        \node[shape=circle, draw=black, fill=white, text=black, font={\Huge}] at (-0.22,-1.79) {4};
        \node[shape=circle, draw=black, fill=white, text=black, font={\Huge}] at (1.57,-0.89) {4};
        \node[shape=circle, draw=blue, fill=blue,minimum size=7mm] at (-1.12,2.24){};
        \node[shape=circle, draw=blue, fill=blue,minimum size=7mm] at (2.46,-0.45){};
        \node[shape=circle, draw=blue, fill=blue,minimum size=7mm] at (-1.12,-2.24){};
        \node[shape=circle, draw=red, fill=red,minimum size=7mm] at (0.67,0.9){};
        \node[shape=circle, draw=red, fill=red,minimum size=7mm] at (1.12,-2.24){};
        \node[shape=circle, draw=red, fill=red,minimum size=7mm] at (-1.12,0){};
        \node[shape=circle, draw=black, fill=white, text=black, font={\Huge}] at (-0.23,0.45) {5};
        \node[shape=circle, draw=black, fill=white, text=black, font={\Huge}] at (0.67,-1.34) {6};
}
}

\tikzset{kitered/.pic={
        \draw[line width=3mm] (-1.12,-2.24)--(-1.12,2.24)--(2.46,-0.45)--(1.12,-2.24)--(-1.12,-2.24);
        \draw[line width=0.6mm] (-1.12,2.24)--(1.12,-2.24);
        \draw[line width=0.6mm] (-1.12,0)--(0.67,0.9);
        \draw[line width=0.6mm] (-1.12,-2.24)--(2.46,-0.45);
        \draw[line width=0.6mm] (-0.22,-1.79)--(-1.12,0)--(0.67,-1.34);
        \draw[line width=0.6mm] (1.57,-0.89)--(0.67,0.9)--(0.67,-1.34);

        \foreach \i/\j in {0.225/-1.565,1.12/-1.115,0.22/-0.445}
        {
        \draw (\i,\j) node[shape=circle,fill=white,font={\Huge}] {\color{black}$1$};
        }

        \foreach \i/\j in {-1.12/-1.12,-1.12/1.12,0/-2.24,
        -0.225/1.57,1.57/0.225,1.79/-1.35}
        {
        \draw(\i,\j) 
        node[shape=circle,fill=white,font={\Huge}]{\color{black}$3$};
        }
        
        \foreach \i/\j in {-0.675/1.345,-0.67/-2.015,2.015/-0.67}
        {
        \draw (\i,\j) node[shape=circle,fill=white,font={\Huge}] {\color{red}$1$};
        }
        \foreach \i/\j in {-0.67/-0.895,-0.225/-0.67,-0.675/0.225,
        0/0.675,0.67/-0.22,1.12/0,0.895/-1.79}
        {
        \draw (\i,\j) node[shape=circle,fill=white,font={\Huge}] {\color{blue}$2$};
        }

        \node[shape=circle, draw=black, fill=white, text=black, font={\Huge}] at (-0.22,-1.79) {4};
        \node[shape=circle, draw=black, fill=white, text=black, font={\Huge}] at (1.57,-0.89) {4};
        \node[shape=circle, draw=red, fill=red,minimum size=7mm] at (-1.12,2.24){};
        \node[shape=circle, draw=red, fill=red,minimum size=7mm] at (2.46,-0.45){};
        \node[shape=circle, draw=red, fill=red,minimum size=7mm] at (-1.12,-2.24){};
        \node[shape=circle, draw=blue, fill=blue,minimum size=7mm] at (0.67,0.9){};
        \node[shape=circle, draw=blue, fill=blue,minimum size=7mm] at (1.12,-2.24){};
        \node[shape=circle, draw=blue, fill=blue,minimum size=7mm] at (-1.12,0){};
        \node[shape=circle, draw=black, fill=white, text=black, font={\Huge}] at (-0.23,0.45) {6};
        \node[shape=circle, draw=black, fill=white, text=black, font={\Huge}] at (0.67,-1.34) {9};
}
}


\begin{center}
\resizebox{435pt}{!}{
\tikz{
\clip (-3.5,-1) rectangle(44,39);

\pic[rotate=270,xscale=-3,yscale=3] at (0,0) {rectangleblue};
\pic[rotate=270,scale=3] at (13.44,0) {kitered};
\pic[rotate=180,scale=3] at (23.52,-3.36) {kiteblue};
\pic[rotate=90,xscale=-3,yscale=3] at (33.6,0) {rectangleblue};
\pic[rotate=270,scale=3] at (47.04,0) {rectangleblue};
\pic[rotate=270,scale=3] at (0,6.72) {rectangleblue};
\pic[rotate=90,xscale=-3,yscale=3] at (13.44,6.72) {rectangleblue};
\pic[rotate=270,scale=3] at (26.88,6.72) {rectangleblue};
\pic[rotate=90, xscale=-3,yscale=3] at (40.32,6.72) {rectangleblue};
\pic[scale=3] at (50.4,10.08) {kitered};
\pic[rotate=216.86,scale=3] at (0,13.44) {kiteblue};
\pic[rotate=-53.14,scale=3] at (10.08,16.8) {kitered};
\pic[rotate=180,scale=3] at (16.8,16.8) {rectangleblue};
\pic [rotate=180, xscale=-3,yscale=3]at (23.52,16.8) {rectangleblue};
\pic[scale=3] at (30.24,16.8) {kitered};
\pic[rotate=90,scale=3] at (40.32,13.44) {kiteblue};
\pic[rotate=126.86,scale=3] at (-3.36,23.52) {kitered};
\pic[rotate=126.86,xscale=-3,yscale=3] at (6.1,22.15) {rectangleblue};
\pic[rotate=-53.14,scale=3] at (16.8,30.24) {kiteblue};
\pic[scale=3] at (23.52,30.24) {kiteblue};
\pic[rotate=-126.86,scale=3] at (34.25,22.15) {rectangleblue};
\pic[rotate=180,scale=3] at (43.68,23.52) {kitered};
\pic[rotate=270,scale=3] at (53.76,20.16) {kiteblue};
\pic[rotate=216.86,scale=3] at (-6.72,33.6) {kiteblue};
\pic[rotate=126.86,xscale=-3,yscale=3] at (7.4,31.56) {rectangleblue};
\pic[rotate=126.86,scale=3] at (3.36,36.96) {rectangleblue};
\pic[rotate=126.86, xscale=-3,yscale=3] at (-0.65,42.4) {rectangleblue};
\pic[rotate=37,scale=3] at (13.44,40.32) {kitered};
\pic[rotate=270,scale=3] at (26.88,40.32) {kitered};
\pic[rotate=233,scale=3] at (32.95,31.65) {rectangleblue};
\pic[rotate=180,scale=3] at (36.96,36.96) {kiteblue};
\pic[rotate=180,xscale=-3,yscale=3] at (43.68,36.96) {rectangleblue};
\pic[rotate=180,xscale=-3,yscale=3] at (50.4,30.24) {rectangleblue};
\pic[scale=3] at (50.4,43.68) {kiteblue};

\node[shape=circle, draw=blue, fill=blue!30, text=black, font={\Huge}] at (0,3.36){$18$};
\node[shape=circle, draw=red, fill=red!60, text=black, font={\Huge}] at (6.72,3.36){$16$};
\node[shape=circle, draw=blue, fill=blue!30, text=black, font={\Huge}] at (13.44,3.36){$18$};
\node[shape=circle, draw=red, fill=red!60, text=black, font={\Huge}] at (20.16,3.36){$16$};
\node[shape=circle, draw=blue, fill=blue!30, text=black, font={\Huge}] at (26.88,3.36){$19$};
\node[shape=circle, draw=red, fill=red!60, text=black, font={\Huge}] at (33.6,3.36){$14$};
\node[shape=circle, draw=blue, fill=blue!30, text=black, font={\Huge}] at (40.32,3.36){$19$};
\node[shape=circle, draw=red, fill=red!60, text=black, font={\Huge}] at (47.04,3.36){$14$};

\node[shape=circle, draw=red, fill=red!60, text=black, font={\Huge}] at (0,10.08){$12$};
\node[shape=circle, draw=blue, fill=blue!30, text=black, font={\Huge}] at (6.72,10.08){$20$};
\node[shape=circle, draw=red, fill=red!60, text=black, font={\Huge}] at (13.44,10.08){$14$};
\node[shape=circle, draw=blue, fill=blue!30, text=black, font={\Huge}] at (20.16,10.08){$20$};
\node[shape=circle, draw=red, fill=red!60, text=black, font={\Huge}] at (26.88,10.08){$14$};
\node[shape=circle, draw=blue, fill=blue!30, text=black, font={\Huge}] at (33.6,10.08){$20$};
\node[shape=circle, draw=red, fill=red!60, text=black, font={\Huge}] at (40.32,10.08){$12$};
\node[shape=circle, draw=blue, fill=blue!30, text=black, font={\Huge}] at (47.04,10.08){$19$};

\node[shape=circle, draw=red, fill=red!60, text=black, font={\Huge}] at (20.16,16.8){$12$};
\node[shape=circle, draw=blue, fill=blue!30, text=black, font={\Huge}] at (20.16,23.52){$20$};
\node[shape=circle, draw=blue, fill=blue!30, text=black, font={\Huge}] at (20.16,36.96){$20$};
\node[shape=circle, draw=red, fill=red!60, text=black, font={\Huge}] at (20.16,30.24){$12$};

\node[shape=circle, draw=red, fill=red!60, text=black, font={\Huge}] at (47.04,16.8){$16$};
\node[shape=circle, draw=blue, fill=blue!30, text=black, font={\Huge}] at (47.04,23.52){$19$};
\node[shape=circle, draw=red, fill=red!60, text=black, font={\Huge}] at (47.04,30.24){$14$};
\node[shape=circle, draw=red, fill=red!60, text=black, font={\Huge}] at (40.32,36.96){$12$};
\node[shape=circle, draw=blue, fill=blue!30, text=black, font={\Huge}] at (47.04,36.96){$19$};
\node[shape=circle, draw=red, fill=red!60, text=black, font={\Huge}] at (13.44,23.52){$16$};
\node[shape=circle, draw=blue, fill=blue!30, text=black, font={\Huge}] at (40.32,30.24){$20$};
\node[shape=circle, draw=red, fill=red!60, text=black, font={\Huge}] at (26.88,23.52){$16$};
\node[shape=circle, draw=red, fill=red!60, text=black, font={\Huge}] at (37.62,15.45){$14$};

\node[shape=circle, draw=blue, fill=blue!30, text=black, font={\Huge}] at (26.88,16.8){$18$};
\node[shape=circle, draw=red, fill=red!60, text=black, font={\Huge}] at (2.7,15.45){$14$};
\node[shape=circle, draw=blue, fill=blue!30, text=black, font={\Huge}] at (13.44,16.8){$18$};

\node[shape=circle, draw=blue, fill=blue!30, text=black, font={\Huge}] at (32.25,19.5){$18$};
\node[shape=circle, draw=blue, fill=blue!30, text=black, font={\Huge}] at (8.07,19.5){$18$};

\node[shape=circle, draw=blue, fill=blue!30, text=black, font={\Huge}] at (41.66,20.83){$19$};
\node[shape=circle, draw=blue, fill=blue!30, text=black, font={\Huge}] at (-1.34,20.83){$19$};
\node[shape=circle, draw=red, fill=red!60, text=black, font={\Huge}] at (25.5,32.94){$14$};
\node[shape=circle, draw=red, fill=red!60, text=black, font={\Huge}] at (14.82,32.94){$14$};
\node[shape=circle, draw=blue, fill=blue!30, text=black, font={\Huge}] at (30.9,28.89){$19$};
\node[shape=circle, draw=red, fill=red!60, text=black, font={\Huge}] at (36.28,24.86){$14$};
\node[shape=circle, draw=blue, fill=blue!30, text=black, font={\Huge}] at (9.42,28.89){$19$};
\node[shape=circle, draw=blue, fill=blue!30, text=black, font={\Huge}] at (29.58,38.34){$19$};
\node[shape=circle, draw=blue, fill=blue!30, text=black, font={\Huge}] at (10.74,38.34){$19$};
\node[shape=circle, draw=red, fill=red!60, text=black, font={\Huge}] at (34.95,34.29){$12$};
\node[shape=circle, draw=red, fill=red!60, text=black, font={\Huge}] at (4.04,24.86){$14$};
\node[shape=circle, draw=blue, fill=blue!30, text=black, font={\Huge}] at (0,30.24){$20$};
\node[shape=circle, draw=red, fill=red!60, text=black, font={\Huge}] at (5.37,34.29){$12$};
}}
\end{center}

\subsection{Half-Hex}

\begin{proposition}
    Consider the non-periodic tiling of the plane with the \href{https://tilings.math.uni-bielefeld.de/substitution/half-hex/}{half-hex} in \cite{TilingEncyclopedia}. We identify a hexagon supertile, as shown in \cite{HalfHex}, comprised of 8 half hexagons. The external supertile vertices are 2 colourable. Supertiles are joined at vertices of the same colour which results in 3 possible rotations of the supertile.

\tikzset{halfhex/.pic={

        \draw[line width=3mm] (-3,-5.2)--(-6,0)--(-3,5.2)--(3,5.2)--(6,0)--(3,-5.2)--(-3,-5.2);
        \draw[line width=0.8mm] (-1.5,-2.6)--(-3,0)--(-1.5,2.6)--(1.5,2.6)--(3,0)--(1.5,-2.6)--(-1.5,-2.6);
        \draw[line width=0.8mm] (-3,5.2)--(3,-5.2);
        \draw[line width=0.8mm] (-6,0)--(-3,0);
        \draw[line width=0.8mm] (-3,-5.2)--(-1.5,-2.6);
        \draw[line width=0.8mm] (3,0)--(6,0);
        \draw[line width=0.8mm] (1.5,2.6)--(3,5.2);

        \foreach \i/\j in {0/-5.2,-4.5/-2.6,-4.5/2.6,0/5.2,4.5/2.6,4.5/-2.6}
        {
        \draw (\i,\j) node[shape=circle,fill=white,font={\Huge}] {\color{black}$3$};
        }
        \foreach\i/\j in {0/2.6,2.25/1.3,2.25/-1.3,0/-2.6,-2.25/-1.3,-2.25/1.3}
        {
        \draw (\i,\j) node[shape=circle,fill=white,font={\Huge}] {\color{black}$1$};
        }
        \foreach \i/\j in {-2.25/3.9,4.5/0,-2.25/-3.9}
        {
        \draw (\i,\j) node[shape=circle,fill=white,font={\Huge}] {\color{red}$1$};
        }
        \foreach \i/\j in {2.25/3.9,2.25/-3.9,-4.5/0}
        {
        \draw (\i,\j) node[shape=circle,fill=white,font={\Huge}] {\color{blue}$2$};
        }
        \draw (0,0) node[shape=circle,fill=white,font={\Huge}] {\color{black}$2$};
        
        \node[shape=circle, draw=red, fill=red,minimum size=7mm] at (-3,5.2){};
        \node[shape=circle, draw=red, fill=red,minimum size=7mm] at (6,0){};
        \node[shape=circle, draw=red, fill=red,minimum size=7mm] at (-3,-5.2){};
        \node[shape=circle, draw=blue, fill=blue,minimum size=7mm] at (3,5.2){};
        \node[shape=circle, draw=blue, fill=blue,minimum size=7mm] at (3,-5.2){};
        \node[shape=circle, draw=blue, fill=blue,minimum size=7mm] at (-6,0){};
        \node[shape=circle, draw=black, fill=white, text=black,font={\Huge}] at (-1.5,2.6){$5$};
        \node[shape=circle, draw=black, fill=white, text=black,font={\Huge}] at (1.5,2.6){$4$};
        \node[shape=circle, draw=black, fill=white, text=black,font={\Huge}] at (3,0){$3$};
        \node[shape=circle, draw=black, fill=white, text=black,font={\Huge}] at (1.5,-2.6){$6$};
        \node[shape=circle, draw=black, fill=white, text=black,font={\Huge}] at (-1.5,-2.6){$3$};
        \node[shape=circle, draw=black, fill=white, text=black,font={\Huge}] at (-3,0){$4$};
    }}

\begin{center}
   \resizebox{130pt}{!}{
   \tikz{
   \pic[scale=1,rotate=240] {halfhex};
   }
   }
\end{center}

    The external supertile vertices are 2-colourable. Assign a weight of 3 to each external edge of the supertile. For internal edges incident to a red vertex, assign a weight of 1. For internal edges incident to a blue vertex assign a weight of 2. Assign the weights of the remaining internal edges as above. This provides a solution to the 1-2-3 problem.
\end{proposition}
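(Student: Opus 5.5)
The plan is the same separation argument used for the Non-Pinwheel and Pinwheel: show that every red vertex sum lies in one range, every blue vertex sum in a disjoint range, and every internal vertex sum below both. I would split the vertices of the tiling into two classes: the internal vertices of each hexagon supertile, and the external supertile vertices, where several supertiles meet.

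\emph{Internal vertices.} These see only edges of their own supertile, so their sums can be read off the labelled supertile. By the figure they are $5,4,3,6,3,4$. The six internal vertices form a hexagonal cycle, and going around it the sums alternate so that no two adjacent ones agree: $5\neq4$, $4\neq3$, $3\neq6$, $6\neq3$, $3\neq4$, $4\neq5$. The central diagonal joins the two vertices with sums $5$ and $6$, which also differ. Every internal sum is at most $6$.

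\emph{External vertices.} By \cite{HalfHex}, supertiles meet only at vertices of the same colour, and the external vertices along the supertile boundary alternate between the two colours. Hence any two adjacent external vertices are joined by an external edge and have opposite colours. Let $d$ be the number of supertiles meeting at a given external vertex. Each such supertile contributes two external edges, but along a shared boundary edge two supertiles contribute the same edge. So I would count the vertex's edges as $k$ external edges of weight $3$ plus $j$ internal edges, where $j\le d$ because each supertile has exactly one internal edge at each corner.

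For a red vertex each internal edge has weight $1$, so the sum is $3k+j$. For a blue vertex each internal edge has weight $2$, so the sum is $3k+2j$. The key step is to enumerate the possible vertex configurations from the substitution, or equivalently from the hexagonal-lattice arrangement of the supertiles. My expectation is that exactly three supertiles meet at every external vertex, in a honeycomb-like pattern, with $k=3$ and $j=3$. That would give red sums of $9+3=12$ and blue sums of $9+6=15$. Both exceed $6$, so no external vertex can clash with an internal neighbour either.

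\emph{Main obstacle.} The hardest step is justifying the local structure at external vertices: that the supertiles really do meet three at a time, with colours matching and every shared boundary edge lying edge-to-edge, so that $k=3$ and $j=3$ always. If some vertex instead has a different degree, with $d$ supertiles meeting and $k=d$, the sums become $4d$ for red and $5d$ for blue. For $d\in\{2,3,4\}$ these give red sums in $\{8,12,16\}$ and blue sums in $\{10,15,20\}$, which are still disjoint. However, $8$ and $10$ are no longer guaranteed to be separated from the internal sums. I would check that case against the collared supertiles, and if needed bound it directly using the fact that every internal sum is at most $6$. Combining the three cases then shows that adjacent vertices always receive different sums.
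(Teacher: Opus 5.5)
Your argument is the paper's: the internal sums are $5,4,3,6,3,4$, all at most $6$, and each external vertex carries three external edges of weight $3$ and three internal edges, giving $12$ at red and $15$ at blue vertices. Your explicit check of the inner hexagon cycle and the central diagonal ($5\neq 6$) is more careful than the paper, which leaves the internal adjacencies to the figure.

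The one step you leave as an expectation is that exactly three supertiles meet edge-to-edge at every external vertex, so that $k=j=3$. The paper simply asserts this. It is not a real obstacle, because a one-line angle count proves it.

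The supertiles are congruent regular hexagons with interior angle $120^\circ$. Take a corner $p$ of one supertile. Any other supertile containing $p$ contributes $120^\circ$ at $p$ if $p$ is one of its corners, or $180^\circ$ if $p$ lies inside one of its sides; it cannot contain $p$ in its interior. If $a$ and $b$ count these two kinds, then $120a+180b=240$, which forces $a=2$ and $b=0$.

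So exactly three supertiles meet corner-to-corner at $p$. The three boundary rays leaving $p$ are each shared by two supertiles, and since all sides have equal length, the supertiles meet edge-to-edge in a honeycomb. This gives $k=3$ and $j=3$.

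Your fallback cases $d\in\{2,4\}$ therefore cannot occur. Your worry that $8$ and $10$ might clash with internal sums was also unfounded, since every internal sum is at most $6$.
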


\begin{proof}
    When combined, each external supertile vertex connects exactly 3 supertiles and each external vertex is incident to 1 internal edge. Thus, each vertex will be incident to 3 external edges, weighted 3, and 3 internal edges, weighted 2 for blue or 1 for red. Therefore, all blue vertices will have a sum of 15 and all red vertices will have a sum of 12. Also notice that the greatest internal sum is 6. 
\end{proof}

\tikzset{halfhex/.pic={

        \draw[line width=3mm] (-3,-5.2)--(-6,0)--(-3,5.2)--(3,5.2)--(6,0)--(3,-5.2)--(-3,-5.2);
        \draw[line width=0.8mm] (-1.5,-2.6)--(-3,0)--(-1.5,2.6)--(1.5,2.6)--(3,0)--(1.5,-2.6)--(-1.5,-2.6);
        \draw[line width=0.8mm] (-3,5.2)--(3,-5.2);
        \draw[line width=0.8mm] (-6,0)--(-3,0);
        \draw[line width=0.8mm] (-3,-5.2)--(-1.5,-2.6);
        \draw[line width=0.8mm] (3,0)--(6,0);
        \draw[line width=0.8mm] (1.5,2.6)--(3,5.2);

        \foreach \i/\j in {0/-5.2,-4.5/-2.6,-4.5/2.6,0/5.2,4.5/2.6,4.5/-2.6}
        {
        \draw (\i,\j) node[shape=circle,fill=white,font={\Huge}] {\color{black}$3$};
        }
        \foreach\i/\j in {0/2.6,2.25/1.3,2.25/-1.3,0/-2.6,-2.25/-1.3,-2.25/1.3}
        {
        \draw (\i,\j) node[shape=circle,fill=white,font={\Huge}] {\color{black}$1$};
        }
        \foreach \i/\j in {-2.25/3.9,4.5/0,-2.25/-3.9}
        {
        \draw (\i,\j) node[shape=circle,fill=white,font={\Huge}] {\color{red}$1$};
        }
        \foreach \i/\j in {2.25/3.9,2.25/-3.9,-4.5/0}
        {
        \draw (\i,\j) node[shape=circle,fill=white,font={\Huge}] {\color{blue}$2$};
        }
        \draw (0,0) node[shape=circle,fill=white,font={\Huge}] {\color{black}$2$};
        
        \node[shape=circle, draw=red, fill=red!60,minimum size=7mm,font={\Huge}] at (-3,5.2){12};
        \node[shape=circle, draw=red, fill=red!60,minimum size=7mm,font={\Huge}] at (6,0){12};
        \node[shape=circle, draw=red, fill=red!60,minimum size=7mm,font={\Huge}] at (-3,-5.2){12};
        \node[shape=circle, draw=blue, fill=blue!30,minimum size=7mm,font={\Huge}] at (3,5.2){15};
        \node[shape=circle, draw=blue, fill=blue!30,minimum size=7mm,font={\Huge}] at (3,-5.2){15};
        \node[shape=circle, draw=blue, fill=blue!30,minimum size=7mm,font={\Huge}] at (-6,0){15};
        \node[shape=circle, draw=black, fill=white, text=black,font={\Huge}] at (-1.5,2.6){$5$};
        \node[shape=circle, draw=black, fill=white, text=black,font={\Huge}] at (1.5,2.6){$4$};
        \node[shape=circle, draw=black, fill=white, text=black,font={\Huge}] at (3,0){$3$};
        \node[shape=circle, draw=black, fill=white, text=black,font={\Huge}] at (1.5,-2.6){$6$};
        \node[shape=circle, draw=black, fill=white, text=black,font={\Huge}] at (-1.5,-2.6){$3$};
        \node[shape=circle, draw=black, fill=white, text=black,font={\Huge}] at (-3,0){$4$};
    }}

\begin{center}
\resizebox{400pt}{!}{
    \tikz{
    \clip (-26.1,3.2) rectangle (26.1,32);
    \foreach \i/\j in {0/0, 0/10.4, 0/20.8, 0/31.2, 0/41.6,0/52,
    36/10.4, -36/10.4,
    -18/20.8, 18/20.8, -18/41.6, 18/41.6,
    -36/31.2, -36/41.6, -36/52,
    36/31.2,36/41.6,36/52}
    {
    \pic[scale=1,rotate=240] at (\i,\j) {halfhex};
    }
    \foreach \i/\j in {-9/5.2,-18/10.4,-27/15.6,-36/20.8,
    27/5.2, 9/15.6, -9/26, -9/46.8,
    9/36.4, 9/57.2,
    -18/52, 18/31.2,
    27/26, 27/46.8,
    -27/36.4,-27/57.2}
    {
    \pic[scale=1,rotate=120] at (\i,\j) {halfhex};
    }
    \foreach \i/\j in {9/5.2,18/10.4,27/15.6,36/20.8,
    -9/15.6, 9/26,
    -27/26, -9/36.4, -9/57.2,
    9/46.8,
    -18/31.2, 18/52,
    27/36.4, 27/57.2,
    -27/46.8}
    {
    \pic[scale=1,rotate=0] at (\i,\j) {halfhex};
    }
    \foreach \i/\j in {-18/0,18/0,-36/0,36/0}
    {
    \pic[scale=1,rotate=240] at (\i,\j) {halfhex};
    }
    \foreach \i/\j in {-27/5.2}
    {
    \pic[scale=1,rotate=0] at (\i,\j) {halfhex};
    }
    }}
\end{center}

\subsection{Ammann-Beenker Rhomb-Triangle tiling}

\begin{proposition}
    Consider the non-periodic tiling of the plane with the \href{https://tilings.math.uni-bielefeld.de/substitution/ammann-beenker-rhomb-triangle/}{Ammann-Beenker rhomb triangle} in \cite{TilingEncyclopedia}. There are 2 prototiles, a triangle with side lengths of 1, 1, and $\sqrt{2}$ and a rhombus with 4 sides of length 1. We identify the following 2 supertiles, the diamond consisting of 6 triangles and 4 rhombi, and the rhombus consisting of 4 triangles and 3 rhombi. The external vertices are 2 colourable. The bipartite borders of these supertiles are explored in \cite{ColouringsofAperiodicTilings}. Assign a weight of 1 to each edge with a length of 1 and a weight of 2 to each edge with a length of $\sqrt{2}$. This provides a solution to the 1-2-3 problem.

    \tikzset{diamond/.pic={
        \draw[line width=2mm] (0,0)--(1.70710678,1.70710678)--(3.41421356,0)--(1.70710678,-1.70710678)--(0,0);
        \draw[line width=0.7mm] (1,0)--(1.70710678,0.70710678)--(2.41421356,0)--(1.70710678,-0.70710678)--(1,0);
        \draw[line width=0.7mm] (0,0)--(3.41421356,0);
        \draw[line width=0.7mm] (1,1)--(1,-1);
        \draw[line width=0.7mm] (1.70710678,1.70710678)--(1.70710678,0.70710678)--(2.70710678,0.70710678);
        \draw[line width=0.7mm] (1.70710678,-1.70710678)--(1.70710678,-0.70710678)--(2.70710678,-0.70710678);
        
        \draw[line width=3.5,->] (1,1)--(0.4,0.4);
        \draw[line width=3.5,->] (1,-1)--(0.4,-0.4);
        \draw[line width=3.5,->] (2.41421356,0)--(1.55710678,0);
        \draw[line width=3.5,->] (2.70710678,0.70710678)--(2.10710678,1.30710678);
        \draw[line width=3.5,->] (2.70710678,-0.70710678)--(2.10710678,-1.30710678);

        \foreach \i/\j in {0.6,0.6,0.6/-0.6,1.85710678/0,2.30710678/1.10710678,2.30710678/-1.10710678}
        {
        \draw (\i,\j) node[shape=circle,fill=white,font={\Huge}]{\color{black}$2$};
        }
        \foreach \i/\j in {1.35355339/1.35355339,1/0.5,1.35355339/-1.35355339,1/-0.5,1.35355339/0.35355339,1.35355339/-0.35355339,1.70710678/-1.35355339,1.70710678/1.35355339,2.20710678/-0.70710678,2.20710678/0.70710678,2.06066017/0.35355339,2.06066017/-0.35355339,3.06066017/0.35355339,3.06066017/-0.35355339,2.91421356/0,0.5/0}
        {
        \draw (\i,\j) node[shape=circle,fill=white,font={\Huge}]{\color{black}$1$};
        }
        \node[shape=circle, draw=black, fill=white, text=black,font={\Huge}] at (1,0) {$7$};
        \node[shape=circle, draw=black, fill=white, text=black,font={\Huge}] at (2.41421356,0) {$5$};
        \node[shape=circle, draw=black, fill=white, text=black,font={\Huge}] at (1.70710678,0.70710678) {$4$};
        \node[shape=circle, draw=black, fill=white, text=black,font={\Huge}] at (1.70710678,-0.70710678) {$4$};  
        \foreach \i/\j in {0/0,1.70710678/1.70710678,3.41421356/0,1.70710678/-1.70710678}
        {
        \node[shape=circle, draw=blue, fill=blue,minimum size=5mm] at (\i,\j) {};
        }

         \foreach \i/\j in {1/1,2.70710678/0.70710678,2.70710678/-0.70710678,1/-1}
        {
        \node[shape=circle, draw=red, fill=red,minimum size=5mm] at (\i,\j) {};
        }

        \coordinate (-A) at (0,0);
        \coordinate (-B) at (1.70710678,1.70710678);
        \coordinate (-C) at (3.41421356,0);
        \coordinate (-D) at (1.70710678,-1.70710678);
         
}}

\tikzset{rhomb/.pic={
        \draw[line width=2mm] (0,0)--(1.70710678,1.70710678)--(4.12132034,1.70710678)--(2.41421356,0)--(0,0);
        \draw[line width=0.7mm] (0.70710678,0.70710678)--(1.70710678,0.70710678)--(1,0);
        \draw[line width=0.7mm] (1.70710678,1.70710678)--(1.70710678,0.70710678)--(2.41421356,0)--(2.41421356,1)--(1.70710678,1.70710678);
        \draw[line width=0.7mm] (3.12132034,1.70710678)--(2.41421356,1)--(3.41421356,1);
        
        \draw[line width=3,->] (1,0)--(1.85710678,0);
        \draw[line width=3,->] (0.70710678,0.70710678)--(1.30710678,1.30710678);
        \draw[line width=3,->] (3.41421356,1)--(2.81421356,0.4);
        \draw[line width=3,->] (3.12132034,1.70710678)--(2.26421356,1.70710678);
        \foreach \i/\j in {1.55710678/0,1.10710678/1.10710678,3.01421356/0.6,2.56421356/1.70710678}
        {
        \draw (\i,\j) node[shape=circle,fill=white,font={\Huge}]{\color{black}$2$};
        }
        \foreach \i/\j in {0.35355339/0.35355339,0.5/0,1.20710678/0.70710678,1.35355339/0.35355339,1.70710678/1.20710678,2.06066017/0.35355339,2.41421356/0.5,2.91421356/1,2.06066017/1.35355339,2.76776695/1.35355339,3.62132034/1.70710678,3.76776695/1.35355339}
        {
        \draw (\i,\j) node[shape=circle,fill=white,font={\Huge}]{\color{black}$1$};
        }
        
        \node[shape=circle, draw=black, fill=white, text=black,font={\Huge}] at (2.41421356,1) {$4$};
        \node[shape=circle, draw=black, fill=white, text=black,font={\Huge}] at (1.70710678,0.70710678) {$4$};
        \foreach \i/\j in {0/0,1.70710678/1.70710678,4.12132034/1.70710678,2.41421356/0,1/0}
        {
        \node[shape=circle, draw=blue, fill=blue,minimum size=5mm] at (\i,\j) {};
        }
        \foreach \i/\j in {0.70710678/0.70710678,3.12132034/1.70710678,3.41421356/1,1/0}
        {
        \node[shape=circle, draw=red, fill=red,minimum size=5mm] at (\i,\j) {};
        }
        \coordinate (-A) at (0,0);
        \coordinate (-B) at (1.70710678,1.70710678);
        \coordinate (-C) at (4.12132034,1.70710678);
        \coordinate (-D) at (2.41421356,0);
        
}}

\begin{center}
    \setlength{\tabcolsep}{8pt}
    \begin{tabular}{c c}
    \resizebox{130pt}{!}{
    \tikz{
    \pic[scale=4] {diamond};
    }
    }
         &  
    \resizebox{150pt}{!}{
    \tikz{
    \pic[scale=4] {rhomb};
    }
    }\\
    \end{tabular}
\end{center}
    
\end{proposition}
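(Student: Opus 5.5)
I would follow the same template as the half-hex and pinwheel propositions: separate the vertices into the external vertices of the supertiles, which lie on the red/blue 2-colourable boundary, and the internal vertices, whose sums are fixed inside one supertile. The weighting depends only on edge length: weight $1$ for unit edges and weight $2$ for the diagonals of length $\sqrt{2}$. These diagonals are the hypotenuses of the triangles, and they occur only in the interior of a supertile. Because the weighting is determined locally, gluing supertiles together never creates a conflict on an edge. I then only need to compare the sums at the two endpoints of each edge.

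The first step is to read the internal sums off the pictures. In the diamond they are $7,5,4,4$, and in the rhombus they are $4,4$. None of these change when supertiles are glued, since every edge at an internal vertex lies inside its supertile. There are two kinds of adjacent internal pairs, the $7$--$4$ and $5$--$4$ pairs in the diamond, and the $4$--$4$ pair in the rhombus. The last pair would be a genuine failure, so I have to check whether those two rhombus interior vertices are actually joined by an edge, i.e. whether they are separated by the unit segment from $(1.707,0.707)$ to $(2.414,0)$ or not. If they are adjacent, the rhombus labelling has to be modified, for example by upgrading one interior unit edge to $3$. I expect this step to need care.

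The second step, and the main one, concerns the external vertices. Every edge at an external vertex carries weight $1$ or $2$, so its sum is the number of unit edges plus twice the number of diagonals meeting it. The Ammann-Beenker supertiles have FLC, so the possible vertex stars of the level-one supertiles form a finite list. For each star I would count the boundary edges (each of weight $1$) and the internal edges of each participating supertile, using the arrows in the pictures to see which diagonals reach the vertex. My aim is to show that blue boundary vertices, which are the supertile corners where many supertiles meet (degree up to $8$), always have sum at least $8$. Red boundary vertices, which are midpoints with only two supertiles, should have sum at most $6$ or so. A separation of the type used in the pinwheel proof then settles every boundary--boundary edge.

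Finally I would check the mixed edges between a boundary vertex and an internal vertex. An internal vertex has sum at most $7$, so it suffices that the blue sums exceed $7$. The red sums must avoid the values of their internal neighbours, which are $4$ and $7$ in the diamond and $4$ in the rhombus. The main obstacle is the enumeration of vertex stars at the blue corners, especially the degree-$3$ and degree-$4$ corners. There the sum may drop to about $5$ or $6$, and I would need to confirm it is neither $7$ nor equal to a neighbouring red sum. I would handle these cases by listing the supertile vertex atlas (collars) explicitly, as was done for the chair.
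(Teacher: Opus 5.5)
There is a genuine gap, and it lies in the step you call the main one. Your picture of the supertile boundary is wrong: the $\sqrt{2}$ edges are not confined to the interior. Every side of both supertiles consists of one unit edge and one $\sqrt{2}$ edge, namely the hypotenuse of a boundary triangle, which is the edge carrying the arrow. So half the boundary edges have weight $2$, not $1$.

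As a result you never compute the external sums. You defer them to a vertex-atlas enumeration that you do not carry out, and your expectations for it are inconsistent. You want every corner to have sum at least $8$, yet you anticipate degree-$3$ or degree-$4$ corners with sum $5$ or $6$. Such a corner would be a real conflict, since corners are adjacent to red side vertices and these have sum $5$. Likewise, a bound like ``red sums at most $6$'' is not enough, because red vertices have interior neighbours of sum $4$.

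The missing idea is the side structure, and it makes the atlas unnecessary. Each side is a unit edge followed by a directed $\sqrt{2}$ edge, so supertiles meet only full side to full side, corner to corner, with the subdivision points matching. Two consequences follow.
\begin{itemize}
\item Each red side vertex lies on exactly two supertiles. Each supertile contributes a $45^\circ$ and a $135^\circ$ angle and one interior unit edge. So the vertex has degree $4$ with exactly one weight-$2$ edge, and its sum is exactly $2+1+1+1=5$.
\item Every supertile corner angle ($45^\circ$, $90^\circ$ or $135^\circ$) is already cut into $45^\circ$ tile angles by edges. So every corner vertex has exactly $360^\circ/45^\circ=8$ edges, and its sum is at least $8$, which exceeds every interior sum (at most $7$).
\end{itemize}

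The rest is inspection. Corners are adjacent only to red and interior vertices. Red vertices are adjacent only to corners and to interior vertices of sum $4$ or $7$. The interior vertex of sum $5$ in the diamond has no red neighbour; its only external neighbour is a corner.

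Your open question about the two $4$'s in the rhombus supertile resolves with no change. They are the obtuse vertices of an interior rhombus, joined by its short diagonal rather than by an edge. Upgrading an edge to $3$ would in any case no longer be the labelling in the statement. You also omitted the interior $7$--$5$ edge in the diamond (the interior $\sqrt{2}$ edge), but it is harmless.
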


\begin{proof}
    Since each side of the supertiles has an edge of length $\sqrt{2}$ with a directed arrow and one of length 1 the supertiles can only join corner to corner. Notice that each corner of the supertiles are comprised of $45^\circ$ angles. When combined the corners will have degree 8 ($360^\circ \div 45^\circ = 8$) and will therefore have a sum $\ge8$. Each side of the supertiles has exactly 1 vertex between corners, coloured red and is made up of 2 $135^\circ$ angles and 2 $45^\circ$ angles. When combined, this vertex will have degree 4 and have exactly 1 edge with a weight of 2. Therefore, these middle, external vertices will sum to 5. Notice that there is an internal vertex with a sum of 5; however, this vertex is only ever adjacent to an external corner and will therefore not create a conflict.
\end{proof}

\tikzset{diamond/.pic={
        \draw[line width=2mm] (0,0)--(1.70710678,1.70710678)--(3.41421356,0)--(1.70710678,-1.70710678)--(0,0);
        \draw[line width=0.7mm] (1,0)--(1.70710678,0.70710678)--(2.41421356,0)--(1.70710678,-0.70710678)--(1,0);
        \draw[line width=0.7mm] (0,0)--(3.41421356,0);
        \draw[line width=0.7mm] (1,1)--(1,-1);
        \draw[line width=0.7mm] (1.70710678,1.70710678)--(1.70710678,0.70710678)--(2.70710678,0.70710678);
        \draw[line width=0.7mm] (1.70710678,-1.70710678)--(1.70710678,-0.70710678)--(2.70710678,-0.70710678);
        
        \draw[line width=3.5,->] (1,1)--(0.4,0.4);
        \draw[line width=3.5,->] (1,-1)--(0.4,-0.4);
        \draw[line width=3.5,->] (2.41421356,0)--(1.55710678,0);
        \draw[line width=3.5,->] (2.70710678,0.70710678)--(2.10710678,1.30710678);
        \draw[line width=3.5,->] (2.70710678,-0.70710678)--(2.10710678,-1.30710678);

        \foreach \i/\j in {0.6,0.6,0.6/-0.6,1.85710678/0,2.30710678/1.10710678,2.30710678/-1.10710678}
        {
        \draw (\i,\j) node[shape=circle,fill=white,font={\Huge}]{\color{black}$2$};
        }
        \foreach \i/\j in {1.35355339/1.35355339,1/0.5,1.35355339/-1.35355339,1/-0.5,1.35355339/0.35355339,1.35355339/-0.35355339,1.70710678/-1.35355339,1.70710678/1.35355339,2.20710678/-0.70710678,2.20710678/0.70710678,2.06066017/0.35355339,2.06066017/-0.35355339,3.06066017/0.35355339,3.06066017/-0.35355339,2.91421356/0,0.5/0}
        {
        \draw (\i,\j) node[shape=circle,fill=white,font={\Huge}]{\color{black}$1$};
        }
        \node[shape=circle, draw=black, fill=white, text=black,font={\Huge}] at (1,0) {$7$};
        \node[shape=circle, draw=black, fill=white, text=black,font={\Huge}] at (2.41421356,0) {$5$};
        \node[shape=circle, draw=black, fill=white, text=black,font={\Huge}] at (1.70710678,0.70710678) {$4$};
        \node[shape=circle, draw=black, fill=white, text=black,font={\Huge}] at (1.70710678,-0.70710678) {$4$};  
        \foreach \i/\j in {0/0,1/1,1.70710678/1.70710678,2.70710678/0.70710678,3.41421356/0,2.70710678/-0.70710678,1.70710678/-1.70710678,1/-1}
        {
        \node[shape=circle, draw=black, fill=black,minimum size=5mm] at (\i,\j) {};
        }
        \foreach \i/\j in {1/1,2.70710678/0.70710678,2.70710678/-0.70710678,1/-1}
        {
        \node[shape=circle, draw=red, fill=red!60,text=black,font={\Huge}] at (\i,\j) {5};
        }

        \coordinate (-A) at (0,0);
        \coordinate (-B) at (1.70710678,1.70710678);
        \coordinate (-C) at (3.41421356,0);
        \coordinate (-D) at (1.70710678,-1.70710678);
         
}}

\tikzset{rhomb/.pic={
        \draw[line width=2mm] (0,0)--(1.70710678,1.70710678)--(4.12132034,1.70710678)--(2.41421356,0)--(0,0);
        \draw[line width=0.7mm] (0.70710678,0.70710678)--(1.70710678,0.70710678)--(1,0);
        \draw[line width=0.7mm] (1.70710678,1.70710678)--(1.70710678,0.70710678)--(2.41421356,0)--(2.41421356,1)--(1.70710678,1.70710678);
        \draw[line width=0.7mm] (3.12132034,1.70710678)--(2.41421356,1)--(3.41421356,1);
        
        \draw[line width=3,->] (1,0)--(1.85710678,0);
        \draw[line width=3,->] (0.70710678,0.70710678)--(1.30710678,1.30710678);
        \draw[line width=3,->] (3.41421356,1)--(2.81421356,0.4);
        \draw[line width=3,->] (3.12132034,1.70710678)--(2.26421356,1.70710678);
        \foreach \i/\j in {1.55710678/0,1.10710678/1.10710678,3.01421356/0.6,2.56421356/1.70710678}
        {
        \draw (\i,\j) node[shape=circle,fill=white,font={\Huge}]{\color{black}$2$};
        }
        \foreach \i/\j in {0.35355339/0.35355339,0.5/0,1.20710678/0.70710678,1.35355339/0.35355339,1.70710678/1.20710678,2.06066017/0.35355339,2.41421356/0.5,2.91421356/1,2.06066017/1.35355339,2.76776695/1.35355339,3.62132034/1.70710678,3.76776695/1.35355339}
        {
        \draw (\i,\j) node[shape=circle,fill=white,font={\Huge}]{\color{black}$1$};
        }
        
        \node[shape=circle, draw=black, fill=white, text=black,font={\Huge}] at (2.41421356,1) {4};
        \node[shape=circle, draw=black, fill=white, text=black,font={\Huge}] at (1.70710678,0.70710678) {4};
        \foreach \i/\j in {0/0,0.70710678/0.70710678,1.70710678/1.70710678,3.12132034/1.70710678,4.12132034/1.70710678,3.41421356/1,2.41421356/0,1/0}
        {
        \node[shape=circle, draw=black, fill=black,minimum size=5mm] at (\i,\j) {};
        }
        \foreach \i/\j in {0.70710678/0.70710678,3.12132034/1.70710678,3.41421356/1,1/0}
        {
        \node[shape=circle, draw=red, fill=red!60,text=black,font={\Huge}] at (\i,\j) {5};
        }
        \coordinate (-A) at (0,0);
        \coordinate (-B) at (1.70710678,1.70710678);
        \coordinate (-C) at (4.12132034,1.70710678);
        \coordinate (-D) at (2.41421356,0);
        
}}

\begin{center}
    \resizebox{350pt}{!}{
    \tikz[scale=1]{
    \clip (0.6,10) rectangle (38,38);
    \pic (T1)[scale=4] {rhomb};
    \pic (T2)[xscale=-4,yscale=4,rotate=90] at (T1-A) {rhomb};
    \pic (T3)[scale=4] at (T1-D) {diamond};
    \pic (T4)[scale=4,rotate=45] at (T1-B) {diamond};
    \pic (T5)[scale=4,rotate=90] at (T3-C) {rhomb};
    \pic (T6)[scale=4,rotate=-45] at (T5-D) {diamond};
    \pic (T8)[scale=4,rotate=45] at (T6-C) {rhomb};
    \pic (T9)[scale=4,rotate=90] at (T2-D) {diamond};
    \pic (T10)[scale=4,rotate=90] at (T4-C) {rhomb};
    \pic (T11)[xscale=-4,yscale=4] at (T10-A) {rhomb};
    \pic (T12)[xscale=-4,yscale=4,rotate=90] at (T4-C) {rhomb};
    \pic (T13)[scale=4] at (T12-A) {rhomb};
    \pic (T14)[xscale=-4,yscale=4] at (T8-B) {rhomb};
    \pic (T15)[scale=4] at (T13-D) {diamond};
    \pic (T16)[scale=4,rotate=90] at (T15-C) {rhomb};
    \pic (T17)[xscale=-4,yscale=4,rotate=90] at (T16-A) {rhomb};
    \pic (T18)[scale=4,rotate=0] at (T17-A) {rhomb};
    \pic (T19)[scale=4,rotate=-90] at (T18-D) {rhomb};
    \pic (T20)[xscale=-4,yscale=4,rotate=45] at (T19-D) {diamond};
    \pic (T21)[xscale=-4,yscale=4,rotate=0] at (T19-C) {rhomb};
    \pic (T7)[xscale=-4,yscale=4,rotate=0] at (T21-D) {diamond};
    \pic (T22)[scale=4,rotate=90] at (T19-B) {diamond};
    \pic (T23)[scale=4,rotate=135] at (T10-B) {diamond};
    \pic (T24)[scale=4,rotate=90] at (T10-D) {diamond};
    \pic (T25)[scale=4,rotate=45] at (T13-B) {diamond};
    \pic (T26)[scale=4,rotate=90] at (T16-D) {diamond};
    \pic (T27)[scale=4,rotate=45] at (T18-B) {diamond};
    \pic (T28)[scale=4,rotate=0] at (T23-C) {rhomb};
    \pic (T29)[xscale=-4,yscale=4,rotate=0] at (T25-C) {rhomb};
    \pic (T30)[scale=4,rotate=0] at (T29-B) {diamond};
    \pic (T31)[xscale=-4,yscale=4,rotate=0] at (T27-C) {rhomb};
    \pic (T32)[scale=4,rotate=90] at (T28-B) {rhomb};
    \pic (T33)[scale=4,rotate=-90] at (T32-B) {diamond};
    \pic (T34)[scale=4,rotate=-45] at (T32-D) {diamond};
    \pic (T35)[xscale=-4,yscale=4,rotate=0] at (T34-B) {rhomb};
    \pic (T36)[scale=4,rotate=0] at (T24-C) {rhomb};
    \pic (T37)[xscale=-4,yscale=4,rotate=90] at (T24-C) {rhomb};
    \pic (T38)[scale=4,rotate=0] at (T35-B) {diamond};
    \pic (T39)[scale=4,rotate=45] at (T36-B) {diamond};
    \pic (T40)[scale=4,rotate=90] at (T26-C) {rhomb};
    \pic (T41)[scale=4,rotate=0] at (T40-D) {rhomb};
    \pic (T42)[scale=4,rotate=180] at (T41-B) {diamond};
    \pic (T43)[scale=4,rotate=-135] at (T41-D) {diamond};
    \pic (T44)[xscale=-4,yscale=4,rotate=90] at (T43-B) {rhomb};
    \pic (T45)[scale=4,rotate=-90] at (T44-B) {diamond};

    \foreach \i/\j in {T1/-B,T19/-D,T13/-D,T23/-A,T25/-A,T27/-A,T30/-A,T34/-A,T43/-A,T6/-A,T26/-A,T39/-A,T24/-A}
    {
    \node[shape=circle, draw=blue, fill=blue!30,text=black,font={\Huge}] at (\i\j) {11};
    }
    
     \foreach \i/\j in {T3/-B,T7/-B,T6/-B,T4/-B,T22/-B,T24/-B,T24/-D,T26/-D,T33/-B,T43/-B,T34/-B,T43/-D,T30/-B,T15/-B}
    {
    \node[shape=circle, draw=blue, fill=teal!40,text=black,font={\Huge}] at (\i\j) {10};
    }

     \foreach \i/\j in {T30/-D}
    {
    \node[shape=circle, draw=blue, fill=cyan!50,text=black,font={\Huge}] at (\i\j) {9};
    }

     \foreach \i/\j in {T4/-C,T15/-C,T34/-C,T26/-C}
    {
    \node[shape=circle, draw=blue, fill=cyan!10,text=black,font={\Huge}] at (\i\j) {8};
    }
    }
    }
    
\end{center}

\subsection{Ammann--Beenker Rhomb--Square tiling}

\begin{proposition}
    Consider the non-periodic tiling of the plane with \href{https://tilings.math.uni-bielefeld.de/substitution/ammann-beenker/}{Ammann--Beenker} with rhombi and squares prototiles in \cite{TilingEncyclopedia}. We identify the following 2 supertiles, the diamond consisting of 1 diamond, 4 half diamonds, and 4 rhombi, and the rhombus consisting of 4 half diamonds and 3 rhombi.

    \tikzset{square/.pic={
\draw[line width=2mm] (0,0)--(0,4)--(4,4)--(4,0)--(0,0);
\draw[loosely dashed, line width=2mm] (2,2)--(0,0);
\draw[loosely dashed,line width=2mm,->] (4,4)--(2,2);
\coordinate (-A) at (0,0);
\coordinate (-B) at (0,4);
\coordinate (-C) at (4,4);
\coordinate (-D) at (4,0);
\coordinate (-a) at (0,2);
\coordinate (-b) at (2,4);
\coordinate (-c) at (4,2);
\coordinate (-d) at (2,0);
}}

\tikzset{rhomb/.pic={
\draw[line width=2mm] (0,0)--(2.828427,2.828427)--(6.828427,2.828427)--(4,0)--(0,0);
\coordinate (-A) at (0,0);
\coordinate (-B) at (2.828427,2.828427);
\coordinate (-C) at (6.828427,2.828427);
\coordinate (-D) at (4,0);
\coordinate (-a) at (1.4142135,1.4142135);
\coordinate (-b) at (4.828427,2.828427);
\coordinate (-c) at (5.4142135,1.4142135);
\coordinate (-d) at (2,0);
}}

\begin{center}
    \setlength{\tabcolsep}{15pt}
    \begin{tabular}{cc}
    \resizebox{135pt}{!}{
    \tikz{
    \pic (T1) [scale=1,rotate=0] {square};
    \pic (T2) [scale=1,rotate=270] at (T1-A) {square};
    \pic (T3) [scale=1,rotate=45] at (T1-D) {rhomb};
    \pic (T4) [scale=1,rotate=-90] at (T3-C) {square};
    \pic (T5) [scale=1,rotate=-45] at (T1-D) {square};
    \pic (T6) [xscale=1,yscale=1,rotate=0] at (T5-D) {rhomb};
    \pic (T7) [xscale=-1,yscale=1,rotate=0] at (T6-C) {rhomb};
    \pic (T8) [xscale=1,yscale=1,rotate=-90] at (T1-D) {rhomb};
    \pic (T9) [xscale=1,yscale=1,rotate=0] at (T8-C) {square};
    \draw[draw=yellow,opacity=0.5,line width=6mm] (T1-A)--(T3-C)--(T6-C)--(T9-A)--(T1-A);
    \foreach \i/\j in {T1/-d,T1/-c,T2/-b,T4/-d,T4/-c,T5/-b,T5/-c,T7/-d,T9/-a,T9/-b,T3/-b,T7/-a,T6/-c,T8/-c}
    {
    \node[shape=circle,fill=white,text=black,font={\Huge}] at (\i\j) {$1$};
    }
    \foreach \i/\j in {T5/-a,T5/-d}
    {
    \node[shape=circle,fill=white,text=black,font={\Huge}] at (\i\j) {$2$};
    }
    \node[draw=black,shape=circle,fill=white,text=black,font={\Huge}] at (T1-D) {$7$};
    \node[draw=black,shape=circle,fill=white,text=black,font={\Huge}] at (T5-C) {$3$};
    \node[draw=black,shape=circle,fill=white,text=black,font={\Huge}] at (T5-B) {$5$};
    \node[draw=black,shape=circle,fill=white,text=black,font={\Huge}] at (T5-D) {$5$};
    \foreach \i/\j in {T3/-B,T4/-C,T9/-C,T2/-C}
    {
    \node[draw=red,shape=circle,fill=red!50,text=black,font={\Huge}] at (\i\j) {$3$};
    }
    \node[draw=blue,shape=circle,fill=blue!10,text=black,font={\huge}] at (T1-A) {A};
    \node[draw=blue,shape=circle,fill=blue!40,text=black,font={\huge}] at (T7-A) {B};
     \foreach \i/\j in {T4/-A,T9/-A}
    {
    \node[draw=blue,shape=circle,fill=cyan!50,text=black,font={\huge}] at (\i\j) {E};
    }
    }
    }
         &  
    \resizebox{153pt}{!}{
    \tikz{
    \pic (T1) [scale=1,rotate=-90] {rhomb};
    \pic (T2) [scale=1,rotate=180] at (T1-D) {rhomb};
    \pic (T3) [scale=1,rotate=180] at (T1-A) {square};
    \pic (T4) [scale=1,rotate=-45] at (T1-A) {square};
    \pic (T5) [scale=1,rotate=0] at (T1-B) {rhomb};
    \pic (T6) [scale=1,rotate=0] at (T1-C) {square};
    \pic (T7) [scale=1,rotate=135] at (T1-C) {square};
    \draw[draw=yellow,opacity=0.5,line width=6mm] (T2-C)--(T3-A)--(T5-C)--(T6-A)--(T2-C);
    \foreach \i/\j in {T5/-b,T5/-c,T2/-b,T2/-c,T2/-a,T2/-d,T5/-a,T5/-d,T1/-a,T1/-b,T1/-c,T1/-d}
    {
    \node[shape=circle,fill=white,text=black,font={\Huge}] at (\i\j) {$1$};
    }
    \node[draw=black,shape=circle,fill=white,text=black,font={\Huge}] at (T1-D) {$4$};
    \node[draw=black,shape=circle,fill=white,text=black,font={\Huge}] at (T1-B) {$4$};
    \foreach \i/\j in {T4/-C,T6/-C,T7/-C,T2/-D}
    {
    \node[draw=red,shape=circle,fill=red!50,text=black,font={\Huge}] at (\i\j) {$3$};
    }
    \node[draw=blue,shape=circle,fill=cyan!40!gray,text=black,font={\Huge}] at (T2-C) {C};
    \node[draw=blue,shape=circle,fill=cyan!40!gray,text=black,font={\Huge}] at (T5-C) {C};
    \foreach \i/\j in {T1/-A,T1/-C}
    {
    \node[draw=blue,shape=circle,fill=blue!2!gray,text=black,font={\huge}] at (\i\j) {D};
    }
    }
    }
    \end{tabular}
\end{center}
Any edge incident to an external supertile vertex is assigned a weight of 1 and the remaining edges are weighted as above. This provides a solution to the 1-2-3 problem.
\end{proposition}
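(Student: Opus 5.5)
The plan is to follow the pattern of the Ammann--Beenker rhomb--triangle proof. Each vertex falls into one of two classes: internal supertile vertices, whose sums are fixed inside a single supertile, and external supertile vertices, whose sums depend on how the supertiles meet. I will show that no two adjacent vertices can have equal sums.

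\textbf{Internal vertices.} Every edge incident to an external supertile vertex has weight $1$. So an internal vertex's sum is read off the figure: $7,3,5,5$ in the diamond supertile and $4,4$ in the rhombus supertile, plus the red midpoint vertices labelled $3$. These values do not change when supertiles are glued, since all their incident edges lie inside one supertile.

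\textbf{External vertices.} The sum at an external vertex $v$ equals $\deg(v)$, because every edge at $v$ has weight $1$. Thus on the supertile boundary the problem reduces to the degree condition, as in Chair Vertex Placement 1: adjacent boundary vertices must have different degrees. The boundaries of the supertiles are the bipartite borders studied in \cite{ColouringsofAperiodicTilings}. They alternate between midpoint vertices (red) and corner vertices (blue, labelled A--E). The arrows on the half diamonds force matching of edges, so supertiles meet edge to edge with corners on corners and midpoints on midpoints.

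\textbf{Red midpoints.} A red midpoint is shared by exactly two supertiles and has total angle $360^\circ$. From the angles at the midpoint in each supertile, I would check that its degree is always $3$, with exactly one internal edge on one side. This agrees with the label $3$.

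\textbf{Corners.} Corners have angles that are multiples of $45^\circ$, and several supertiles meet there. Their degree should therefore be large, at least $6$ and in practice $8$ or more. I would tabulate the possible corner configurations, using the vertex stars of the Ammann--Beenker tiling at the supertile level. The labels A--E indicate which corner types meet. This bounds each corner sum from below by something exceeding every value it can be adjacent to.

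\textbf{Checking adjacent pairs.} Every edge then has one of these forms:
\begin{enumerate}
\item internal--internal: check directly in the figure;
\item internal--red: the figure shows no internal vertex with sum $3$ adjacent to a red vertex, while the internal vertex with sum $3$ (at T5-C) is adjacent only to internal or corner vertices;
\item internal--corner: the internal sums are at most $7$, so it suffices that corner sums exceed $7$;
\item red--corner: $3$ versus at least $8$;
\item corner--corner: these cannot occur, since boundary edges always pass through a red midpoint.
\end{enumerate}

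The main obstacle is the corner analysis, in two parts. First, justifying that only the joins implied by the arrows occur requires the vertex-star list of the supertile tiling. Second, a corner could have small degree, for example where only two or three supertiles meet at a wide angle, and its sum might then coincide with an adjacent internal $5$ or $7$. I would first try to show that every corner angle in the supertiles is $45^\circ$ or $90^\circ$ with internal edges emanating. If that fails, I would fall back on a case check of the finitely many corner stars, using the A--E labels.
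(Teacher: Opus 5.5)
The gap is in your corner analysis, which carries the whole difficulty of the statement. Steps 3 and 4 rest on corner sums exceeding $7$ (``at least $8$''). A corner's sum is its degree, and Ammann--Beenker vertex degrees run from $3$ to $8$. Corners sit well below $8$:

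\begin{itemize}
\item At an A corner the two full squares T1 and T2 are adjacent. Among the vertex configurations, only configuration 3 has two adjacent squares, so every A corner has degree exactly $5$.
\item A D corner can also have degree $5$, since an A corner is simultaneously a D corner of the two rhombus supertiles meeting there.
\item E corners have degree $6$ or $7$.
\end{itemize}

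Worse, A is adjacent to the internal $7$ at T1-D, so A must be separated from its neighbour from \emph{below}. Your fallback, as phrased (``bound each corner sum from below by something exceeding every value it can be adjacent to''), therefore cannot handle A either.

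What is needed is a label-by-label comparison, as in the paper. Inside its supertile, A sees only the $7$, B and C see only sums $3$, D sees only the two $4$'s, and E sees a $5$ (T5-B or T5-D) and a $3$. Every edge at a corner vertex lies in some supertile in which that vertex is a corner; this includes the legs of the outer halves of split squares. So it suffices to show $\deg A=5$, $\deg B,\deg C\ge 4$, $\deg D\ge 5$ (you must rule out configuration 2 at D) and $\deg E\notin\{3,5\}$.

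The E case is genuinely nonlocal. What you see at E inside one supertile (the full square T4 and the $45^\circ$ angle of T3) is compatible with configuration 3, of degree $5$, which would clash with the adjacent $5$. The paper excludes this as follows. The level-1 supertiles at A are in configuration 1, so a rhombus supertile across the edge A--E contributes a second $45^\circ$ rhomb next to T3 at E. The two rhombi in configuration 3 are never adjacent, so E is in configuration 4 or 5.

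Separately, your picture of the supertile boundary is off. Each supertile edge is a square diagonal (not a graph edge) followed by a unit edge. So at a red midpoint such as T3-B only one boundary segment is an edge, and each of the two supertiles contributes one internal edge (to T1-D and to T1-B). That, not ``one internal edge on one side'', is why the degree there is $3$.
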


\begin{proof}

Consider an edge $e$ in the tiling. If the edge $e$ is not incident to one of the corner vertices of the supertile, by inspection we can see that the endpoints of $e$ have different sum.

Therefore, we just need to show that the edges in or on the boundary of supertiles, which are incident to the corner vertices (labeled A,B,C,D,E) create no conflict. We look at these corner vertices.

Since A,B,C, D, E are external patch vertices, and hence each edge incident to those has a weight at 1, the sums assigned to the vertices A,B,C,D,E are the degrees of the corresponding vertices.

Recall that the Ammann-Beenker tiling has the following vertex configurations, see \cite{AmmannBeenker} and \cite{AB7Config}.

    \tikzset{square/.pic={
\draw[line width=2.5mm] (0,0)--(0,4)--(4,4)--(4,0)--(0,0);
\draw[loosely dashed, line width=2.5mm] (2,2)--(0,0);
\draw[loosely dashed,line width=2.5mm,->] (4,4)--(2,2);
\coordinate (-A) at (0,0);
\coordinate (-B) at (0,4);
\coordinate (-C) at (4,4);
\coordinate (-D) at (4,0);
}}

\tikzset{rhomb/.pic={
\draw[line width=2.5mm] (0,0)--(2.828427,2.828427)--(6.828427,2.828427)--(4,0)--(0,0);
\coordinate (-A) at (0,0);
\coordinate (-B) at (2.828427,2.828427);
\coordinate (-C) at (6.828427,2.828427);
\coordinate (-D) at (4,0);
}}

\begin{center}
    \setlength{\tabcolsep}{5pt}
    \begin{tabular}{cccc}
    \resizebox{50pt}{!}{
    \tikz{
    \pic (T1) [scale=1.3,rotate=-135] {square};
    \pic (T2) [scale=1.3,rotate=-90] at (T1-D) {rhomb};
    \pic (T3) [xscale=-1.3,yscale=1.3,rotate=-90] at (T1-B) {rhomb};
    \node[shape=circle,draw=red,fill=red,minimum size=7mm] at (T1-C) {};
    \foreach \i/\j in {T1/-B,T1/-D,T2/-C}
    {
    \node[shape=circle,draw=black,fill=black,minimum size=7mm] at (\i\j) {};
    }
    }
    }
         &  
    \resizebox{80pt}{!}{
    \tikz{
    \pic (T1) [scale=1.3,rotate=-22.5] {rhomb};
    \pic (T2) [scale=1.3,rotate=67.5] at (T1-B) {rhomb};
    \pic (T3) [scale=1.3,rotate=22.5] at (T1-A) {square};
    \pic (T4) [scale=1.3,rotate=67.5] at (T1-C) {square};
    \node[shape=circle,draw=red,fill=red,minimum size=7mm] at (T1-B) {};
    \foreach \i/\j in {T1/-A,T1/-C,T2/-B,T2/-D}
    {
    \node[shape=circle,draw=black,fill=black,minimum size=7mm] at (\i\j) {};
    }
    }
    }
    &
    \resizebox{110pt}{!}{
    \tikz{
    \pic (T1) [scale=1.3,rotate=90] {square};
    \pic (T2) [scale=1.3,rotate=0] at (T1-A) {square};
    \pic (T3) [scale=1.3,rotate=45] at (T1-D) {square};
    \pic (T4) [scale=1.3,rotate=0] at (T1-D) {rhomb};
    \pic (T5) [xscale=-1.3,yscale=1.3,rotate=0] at (T1-D) {rhomb};
    \node[shape=circle,draw=red,fill=red,minimum size=7mm] at (T1-D) {};
    \foreach \i/\j in {T3/-B,T3/-D,T1/-C,T2/-C,T1/-A}
    {
    \node[shape=circle,draw=black,fill=black,minimum size=7mm] at (\i\j) {};
    }
    }
    }
    &
    \resizebox{110pt}{!}{
    \tikz{
    \pic (T1) [scale=1.3,rotate=180] {square};
    \pic (T2) [scale=1.3,rotate=-90] at (T1-A) {square};
    \pic (T3) [scale=1.3,rotate=45] at (T1-A) {square};
    \pic (T4) [scale=1.3,rotate=0] at (T1-A) {rhomb};
    \pic (T5) [xscale=-1.3,yscale=1.3,rotate=0] at (T1-A) {rhomb};
    \node[shape=circle,draw=red,fill=red,minimum size=7mm] at (T1-A) {};
    \foreach \i/\j in {T1/-D,T1/-B,T2/-B,T3/-B,T3/-D}
    {
    \node[shape=circle,draw=black,fill=black,minimum size=7mm] at (\i\j) {};
    }
    }
    }\\
    \footnotesize{Vertex Configuration 1} & \footnotesize{Vertex Configuration 2} & \footnotesize{Vertex Configuration 3a} & \footnotesize{Vertex Configuration 3b}
    \end{tabular}
\end{center}

\begin{center}
    \setlength{\tabcolsep}{15pt}
    \begin{tabular}{ccc}
    \resizebox{80pt}{!}{
    \tikz{
    \pic (T1) [scale=1.3,rotate=67.5] {rhomb};
    \pic (T2) [scale=1.3,rotate=112.5] at (T1-A) {square};
    \pic (T3) [scale=1.3,rotate=-22.5] at (T1-A) {square};
    \pic (T4) [scale=1.3,rotate=-67.5] at (T1-A) {rhomb};
    \pic (T5) [scale=1.3,rotate=-112.5] at (T1-A) {rhomb};
    \pic (T6) [scale=1.3,rotate=-157.5] at (T1-A) {rhomb};
    \node[shape=circle,draw=red,fill=red,minimum size=7mm] at (T1-A) {};
    \foreach \i/\j in {T1/-B,T1/-D,T2/-B,T3/-D,T5/-B,T5/-D}
    {
    \node[shape=circle,draw=black,fill=black,minimum size=7mm] at (\i\j) {};
    }
    }
    }
         &  
    \resizebox{100pt}{!}{
    \tikz{
    \pic (T1) [scale=1.3,rotate=90] {rhomb};
    \pic (T2) [scale=1.3,rotate=45] at (T1-A) {rhomb};
    \pic (T3) [scale=1.3,rotate=0] at (T1-A) {rhomb};
    \pic (T4) [scale=1.3,rotate=135] at (T1-A) {rhomb};
    \pic (T5) [scale=1.3,rotate=-45] at (T1-A) {rhomb};
    \pic (T6) [xscale=-1.3,yscale=1.3,rotate=-45] at (T1-A) {rhomb};
    \pic (T7) [scale=1.3,rotate=-135] at (T1-A) {square};
    
    \node[shape=circle,draw=red,fill=red,minimum size=7mm] at (T1-A) {};
    \foreach \i/\j in {T1/-B,T1/-D,T2/-D,T3/-D,T4/-B,T7/-B,T7/-D}
    {
    \node[shape=circle,draw=black,fill=black,minimum size=7mm] at (\i\j) {};
    }
    }
    }
    &
    \resizebox{100pt}{!}{
    \tikz{
    \pic (T1) [scale=1.3,rotate=90] {rhomb};
    \pic (T2) [scale=1.3,rotate=45] at (T1-A) {rhomb};
    \pic (T3) [scale=1.3,rotate=0] at (T1-A) {rhomb};
    \pic (T4) [scale=1.3,rotate=135] at (T1-A) {rhomb};
    \pic (T5) [scale=1.3,rotate=-45] at (T1-A) {rhomb};
    \pic (T6) [xscale=-1.3,yscale=1.3,rotate=-45] at (T1-A) {rhomb};
    \pic (T7) [xscale=1.3,yscale=1.3,rotate=-90] at (T1-A) {rhomb};
    \pic (T8) [xscale=-1.3,yscale=1.3,rotate=-90] at (T1-A) {rhomb};
    
    \node[shape=circle,draw=red,fill=red,minimum size=7mm] at (T1-A) {};
    \foreach \i/\j in {T1/-B,T1/-D,T2/-D,T3/-D,T4/-B,T5/-D,T6/-D,T8/-D}
    {
    \node[shape=circle,draw=black,fill=black,minimum size=7mm] at (\i\j) {};
    }
    }
    }
    \\
    \footnotesize{Vertex Configuration 4} & \footnotesize{Vertex Configuration 5} & \footnotesize{Vertex Configuration 6}
    \end{tabular}
\end{center}

By inspection we see that
\begin{itemize}
\item{} Since the A vertex has two arrows pointing in with a $90^\circ$ degree in between, any A vertex is the central vertex in Vertex configuration 3b, and hence has a degree of 5. Since A vertex is connected only to vertices of sum weight 7, there is no conflict here.

\item{} Vertex B is only connected with vertices of degree 3. Since there must be at least one more edge besides the three edges we see, there is no conflict. 

\item{} Vertex C is only connected with vertices of degree 3.The $45^circ$ angle shoes that this vertex cannot be in vertex configuration 1, and by inspection, all other have a degree of at least 4. Therefore, there is no conflict.

\item{} Vertex D is only connected with vertices of degree 4. Since there must be at least one more edge besides the four edges we see, there is no conflict. 

\item{} Note that the direction of the arrow $AB$ in the level one supertile is opposite to the direction of the arrow inside its central square (see for example \cite[Figure 6.1]{TAO} or \cite[Figure 4]{AB7Config}, we conclude that the level 1 supertiles at A must be in Vertex configuration 1.
 
Drawing these supertiles at A, we can see that at vertex E we can only get vertex configuration 4 or 5. Therefore, the degree at E must be 6 or 7. Since E is only connected with vertices of degree 3 or 5, there is no conflict.

\tikzset{square/.pic={
\draw[line width=2mm] (0,0)--(0,4)--(4,4)--(4,0)--(0,0);
\draw[loosely dashed, line width=2mm] (2,2)--(0,0);
\draw[loosely dashed,line width=2mm,->] (4,4)--(2,2);
\coordinate (-A) at (0,0);
\coordinate (-B) at (0,4);
\coordinate (-C) at (4,4);
\coordinate (-D) at (4,0);
\coordinate (-a) at (0,2);
\coordinate (-b) at (2,4);
\coordinate (-c) at (4,2);
\coordinate (-d) at (2,0);
}}

\tikzset{square2/.pic={
\draw[line width=2mm] (0,0)--(0,4)--(4,4)--(4,0)--(0,0);
\draw[loosely dashed, line width=2mm] (2,2)--(4,4);
\draw[loosely dashed,line width=2mm,->] (0,0)--(2,2);
\coordinate (-A) at (0,0);
\coordinate (-B) at (0,4);
\coordinate (-C) at (4,4);
\coordinate (-D) at (4,0);
\coordinate (-a) at (0,2);
\coordinate (-b) at (2,4);
\coordinate (-c) at (4,2);
\coordinate (-d) at (2,0);
}}
\tikzset{rhomb/.pic={
\draw[line width=2mm] (0,0)--(2.828427,2.828427)--(6.828427,2.828427)--(4,0)--(0,0);
\coordinate (-A) at (0,0);
\coordinate (-B) at (2.828427,2.828427);
\coordinate (-C) at (6.828427,2.828427);
\coordinate (-D) at (4,0);
\coordinate (-a) at (1.4142135,1.4142135);
\coordinate (-b) at (4.828427,2.828427);
\coordinate (-c) at (5.4142135,1.4142135);
\coordinate (-d) at (2,0);
}}
\begin{center}
\resizebox{120pt}{!}{
\tikz{
\pic (B89) [scale=1,rotate=45] at (-12.46,-2.828427) {rhomb};
\pic (B90) [scale=1,rotate=45] at (-9.64,-5.6) {square};
\pic (B90) [scale=1,rotate=90] at (-6.82,-2.828427) {rhomb};
\pic (T92) [scale=1,rotate=0] at (-6.82,1.2) {rhomb};
\pic (T94) [scale=1,rotate=0] at (-6.82,-2.828427) {square};
\pic (T124) [scale=1,rotate=0] at (-16.46,-2.828427)  {square};
\pic (T97) [scale=1,rotate=45] at (0,-5.6) {square};
\pic (T114) [scale=1,rotate=135] at (-4,4){square2};
\pic (T119) [scale=1,rotate=0] at (-6.82,-8.46) {rhomb};
\pic (T121) [scale=1,rotate=45] at (-2.8,-2.85) {rhomb};
\pic (T122) [scale=1,rotate=135] at (0,-5.64) {rhomb};
\pic (T157) [scale=1,rotate=-135] at (-16.5,-2.828427) {square};
\pic (B91) [scale=1,rotate=-225] at (-9.64,-5.64) {rhomb};
\pic (T171) [scale=1,rotate=180] at (-9.64,-5.64){square};
\pic (T172) [scale=1,rotate=225] at (-13.64,-5.64) {rhomb};
\pic (T173) [scale=1,rotate=0] at (-16.46,-12.4) {rhomb};
\pic (T175) [scale=1,rotate=135] at (-6.84,-12.4) {square};
\pic (T176) [scale=1,rotate=0] at (-6.84,-12.35) {square};
\pic (T114) [scale=1,rotate=135] at (-4,-5.64){square2};

\draw[draw=yellow,opacity=0.5,line width=6mm] (-9.64,-5.64)--(-9.64,4)--(0,4)--(0,-5.64)--(-9.64,-5.64);
\draw[draw=yellow,opacity=0.5,line width=6mm] (-9.64,4) --(-16.46,-3.1) --(-16.46,-12.4)--(-9.64,-5.64);
\draw[draw=yellow,opacity=0.5,line width=6mm] (-16.46,-12.4)--(-6.84,-12.4)--(0,-5.64) ;

\node[draw=blue,shape=circle,fill=blue!10,text=black,font={\huge}] at (-9.64,-5.64) {A};
\node[draw=blue,shape=circle,fill=cyan!50,text=black,font={\huge}] at (-9.64,4) {E};
\node[draw=blue,shape=circle,fill=cyan!50,text=black,font={\huge}] at (0,-5.64) {E};
\node[draw=blue,shape=circle,fill=blue!40,text=black,font={\huge}] at (0,4) {B};
}
}
\end{center}
\end{itemize}

Therefore no conflicts exist.
\end{proof}

\tikzset{square/.pic={
\draw[line width=2mm] (0,0)--(0,4)--(4,4)--(4,0)--(0,0);
\draw[loosely dashed, line width=2mm] (2,2)--(0,0);
\draw[loosely dashed,line width=2mm,->] (4,4)--(2,2);
\foreach \i/\j in {2/4,4/2}
{
\node[shape=circle,fill=white,text=black,font={\Huge}] at (\i,\j) {$1$};
}
\coordinate (-A) at (0,0);
\coordinate (-B) at (0,4);
\coordinate (-C) at (4,4);
\coordinate (-D) at (4,0);
\coordinate (-a) at (0,2);
\coordinate (-b) at (2,4);
\coordinate (-c) at (4,2);
\coordinate (-d) at (2,0);
}}

\tikzset{square2/.pic={
\draw[line width=2mm] (0,0)--(0,4)--(4,4)--(4,0)--(0,0);
\draw[loosely dashed, line width=2mm] (2,2)--(4,4);
\draw[loosely dashed,line width=2mm,->] (0,0)--(2,2);
\foreach \i/\j in {2/4,4/2}
{
\node[shape=circle,fill=white,text=black,font={\Huge}] at (\i,\j) {$1$};
}
\coordinate (-A) at (0,0);
\coordinate (-B) at (0,4);
\coordinate (-C) at (4,4);
\coordinate (-D) at (4,0);
\coordinate (-a) at (0,2);
\coordinate (-b) at (2,4);
\coordinate (-c) at (4,2);
\coordinate (-d) at (2,0);
}}

\tikzset{rhomb/.pic={
\draw[line width=2mm] (0,0)--(2.828427,2.828427)--(6.828427,2.828427)--(4,0)--(0,0);
\foreach \i/\j in {1.4142135/1.4142135,4.828427/2.828427,5.4142135/1.4142135,2/0}
{
\node[shape=circle,fill=white,text=black,font={\Huge}] at (\i,\j) {$1$};
}
\coordinate (-A) at (0,0);
\coordinate (-B) at (2.828427,2.828427);
\coordinate (-C) at (6.828427,2.828427);
\coordinate (-D) at (4,0);
\coordinate (-a) at (1.4142135,1.4142135);
\coordinate (-b) at (4.828427,2.828427);
\coordinate (-c) at (5.4142135,1.4142135);
\coordinate (-d) at (2,0);
}}
\begin{center}
\resizebox{420pt}{!}{
\tikz{
\clip (-35,-20) rectangle (13,13);

\pic (T1) [scale=1,rotate=0] {square};
\pic (T2) [scale=1,rotate=90] at (T1-A) {rhomb};
\pic (T3) [scale=1,rotate=-45] at (T2-C) {square};
\pic (T4) [scale=1,rotate=0] at (T1-B) {rhomb};
\pic (T5) [scale=1,rotate=45] at (T1-D) {rhomb};
\pic (T6) [scale=1,rotate=45] at (T3-A) {rhomb};
\pic (T7) [scale=1,rotate=-90] at (T6-C) {square};
\pic (T8) [scale=1,rotate=-45] at (T3-B) {rhomb};
\pic (T9) [scale=1,rotate=-90] at (T7-B) {rhomb};
\pic (T10) [scale=1,rotate=0] at (T4-C) {square};
\pic (T11) [scale=1,rotate=-90] at (T5-C) {rhomb};
\pic (T12) [scale=1,rotate=-45] at (T1-A) {rhomb};
\pic (T13) [scale=1,rotate=45] at (T12-C) {square};
\pic (T14) [scale=1,rotate=-225] at (T7-A) {square};
\pic (T15) [scale=1,rotate=-270] at (T3-A) {square};
\pic (T16) [scale=1,rotate=-135] at (T3-A) {rhomb};
\pic (T17) [scale=1,rotate=-180] at (T3-A) {rhomb};
\pic (T18) [scale=1,rotate=45] at (T17-C) {square};
\pic (T19) [xscale=-1,yscale=1,rotate=0] at (T18-A) {rhomb};
\pic (T20) [scale=1,rotate=0] at (T19-C) {square};
\pic (T21) [xscale=-1,yscale=1,rotate=0] at (T6-B) {rhomb};
\pic (T22) [scale=1,rotate=-45] at (T7-B) {square};
\pic (T23) [scale=1,rotate=0] at (T7-A) {square};
\pic (T24) [scale=1,rotate=45] at (T22-A) {rhomb};
\pic (T25) [scale=1,rotate=-90] at (T24-C) {square};
\pic (T26) [scale=1,rotate=-45] at (T22-B) {rhomb};
\pic (T27) [scale=1,rotate=0] at (T22-D) {rhomb};
\pic (T28) [scale=1,rotate=-45] at (T10-A) {rhomb};
\pic (T29) [scale=1,rotate=45] at (T28-C) {square};
\pic (T30) [scale=1,rotate=45] at (T29-B) {rhomb};
\pic (T31) [scale=1,rotate=-90] at (T30-C) {rhomb};
\pic (T32) [scale=1,rotate=0] at (T29-A) {rhomb};
\pic (T33) [scale=1,rotate=90] at (T32-C) {square};
\pic (T34) [scale=1,rotate=0] at (T13-A) {rhomb};
\pic (T35) [scale=1,rotate=90] at (T34-C) {square};
\pic (T36) [scale=1,rotate=0] at (T35-A) {square};
\pic (T37) [scale=1,rotate=-45] at (T36-A) {rhomb};
\pic (T38) [scale=1,rotate=180] at (T37-C) {square};
\pic (T39) [scale=1,rotate=-90] at (T13-A) {square};
\pic (T40) [scale=1,rotate=-135] at (T35-A) {square};
\pic (T41) [scale=1,rotate=-90] at (T40-D) {rhomb};
\pic (T42) [scale=1,rotate=135] at (T20-A) {square};
\pic (T43) [scale=1,rotate=-90] at (T19-C) {rhomb};
\pic (T44) [xscale=-1,yscale=1,rotate=-90] at (T19-C) {rhomb};
\pic (T45) [scale=1,rotate=90] at (T44-C) {square};
\pic (T46) [scale=1,rotate=-45] at (T39-D) {rhomb};
\pic (T47) [scale=1,rotate=-135] at (T39-A) {rhomb};
\pic (T48) [scale=1,rotate=-45] at (T47-C) {square};
\pic (T49) [scale=1,rotate=-90] at (T48-A) {rhomb};
\pic (T50) [scale=1,rotate=0] at (T48-D) {rhomb};
\pic (T51) [scale=1,rotate=0] at (T49-C) {square};
\pic (T52) [scale=1,rotate=180] at (T13-A) {rhomb};
\pic (T53) [scale=1,rotate=180] at (T45-A) {rhomb};
\pic (T54) [scale=1,rotate=-45] at (T53-C) {rhomb};
\pic (T55) [scale=1,rotate=-225] at (T49-C) {rhomb};
\pic (T56) [scale=1,rotate=-135] at (T51-A) {square};
\pic (T57) [scale=1,rotate=180] at (T56-A) {rhomb};
\pic (T58) [scale=1,rotate=-90] at (T57-C) {square};
\pic (T59) [scale=1,rotate=-90] at (T54-A) {rhomb};
\pic (T60) [scale=1,rotate=-135] at (T58-A) {rhomb};
\pic (T61) [scale=1,rotate=90] at (T60-C) {rhomb};
\pic (T62) [scale=1,rotate=-135] at (T59-A) {rhomb};
\pic (T63) [scale=1,rotate=45] at (T54-A) {square};
\pic (T64) [scale=1,rotate=-225] at (T63-A) {rhomb};
\pic (T65) [scale=1,rotate=0] at (T64-C) {square};
\pic (T66) [scale=1,rotate=90] at (T65-A) {rhomb};
\pic (T67) [scale=1,rotate=135] at (T61-A) {rhomb};
\pic (T68) [scale=1,rotate=-180] at (T17-B) {square2};
\pic (T69) [scale=1,rotate=90] at (T68-C) {square};
\pic (T70) [scale=1,rotate=-45] at (T68-C) {rhomb};
\pic (T71) [scale=1,rotate=45] at (T70-C) {square};
\pic (T72) [scale=1,rotate=-45] at (T71-A) {square};
\pic (T73) [scale=1,rotate=-90] at (T72-A) {rhomb};
\pic (T74) [scale=1,rotate=0] at (T73-C) {square};
\pic (T75) [scale=1,rotate=-90] at (T74-A) {square};
\pic (T76) [scale=1,rotate=-180] at (T71-A) {square};
\pic (T77) [scale=1,rotate=-225] at (T74-A) {square};
\pic (T78) [scale=1,rotate=-180] at (T77-D) {rhomb};
\pic (T79) [scale=1,rotate=-135] at (T68-C) {square};
\pic (T80) [scale=1,rotate=225] at (T76-D) {rhomb};
\pic (T81) [scale=1,rotate=-90] at (T21-C) {rhomb};
\pic (T82) [scale=1,rotate=45] at (T18-B) {rhomb};
\pic (T83) [scale=1,rotate=180] at (T68-C) {rhomb};
\pic (T84) [scale=1,rotate=45] at (T83-C) {square};
\pic (T85) [scale=1,rotate=-90] at (T84-A) {square};
\pic (T86) [scale=1,rotate=-90] at (T83-B) {rhomb};
\pic (T87) [scale=1,rotate=135] at (T84-A) {square};
\pic (T88) [scale=1,rotate=-135] at (T85-A) {rhomb};
\pic (T89) [scale=1,rotate=-45] at (T88-B) {rhomb};
\pic (T90) [scale=1,rotate=-45] at (T88-C) {square};
\pic (T91) [scale=1,rotate=90] at (T90-A) {square};
\pic (T92) [scale=1,rotate=0] at (T90-D) {rhomb};
\pic (T93) [scale=1,rotate=-90] at (T90-A) {rhomb};
\pic (T94) [scale=1,rotate=0] at (T93-C) {square};
\pic (T95) [scale=1,rotate=45] at (T94-D) {rhomb};
\pic (T96) [scale=1,rotate=-45] at (T94-A) {rhomb};
\pic (T97) [scale=1,rotate=45] at (T96-C) {square};
\pic (T98) [scale=1,rotate=-45] at (T80-C) {rhomb};
\pic (T99) [scale=1,rotate=-135] at (T77-A) {rhomb};
\pic (T100) [scale=1,rotate=-45] at (T99-C) {square};
\pic (T101) [scale=1,rotate=90] at (T100-A) {square};
\pic (T102) [scale=1,rotate=0] at (T97-A) {rhomb};
\pic (T103) [scale=1,rotate=90] at (T101-B) {rhomb};
\pic (T104) [scale=1,rotate=-135] at (T100-A) {square};
\pic (T105) [scale=1,rotate=-90] at (T97-A) {square};
\pic (T106) [scale=1,rotate=180] at (T67-A) {rhomb};
\pic (T107) [scale=1,rotate=180] at (T90-A) {rhomb};
\pic (T108) [scale=1,rotate=45] at (T107-C) {square};
\pic (T109) [scale=1,rotate=135] at (T108-A) {rhomb};
\pic (T110) [scale=1,rotate=-90] at (T109-C) {rhomb};
\pic (T111) [scale=1,rotate=-135] at (T105-A) {rhomb};
\pic (T112) [scale=1,rotate=-45] at (T111-C) {square};
\pic (T113) [scale=1,rotate=-90] at (T112-A) {rhomb};
\pic (T114) [scale=1,rotate=135] at (T96-D) {square2};
\pic (T115) [scale=1,rotate=45] at (T114-C) {square};
\pic (T116) [scale=1,rotate=-45] at (T60-C) {square};
\pic (T117) [scale=1,rotate=-45] at (T60-B) {rhomb};
\pic (T118) [scale=1,rotate=-90] at (T58-B) {rhomb};
\pic (T119) [scale=1,rotate=0] at (T114-B) {rhomb};
\pic (T120) [scale=1,rotate=0] at (T109-C) {square};
\pic (T121) [scale=1,rotate=45] at (T115-B) {rhomb};
\pic (T122) [scale=1,rotate=135] at (T115-A) {rhomb};
\pic (T123) [scale=1,rotate=90] at (T122-C) {square};
\pic (T124) [scale=1,rotate=0] at (T122-C) {square};
\pic (T125) [scale=1,rotate=0] at (T65-B) {rhomb};
\pic (T126) [scale=1,rotate=45] at (T65-D) {rhomb};
\pic (T127) [scale=1,rotate=-90] at (T126-C) {rhomb};
\pic (T128) [scale=1,rotate=-45] at (T127-A) {rhomb};
\pic (T129) [scale=1,rotate=0] at (T128-A) {rhomb};
\pic (T130) [scale=1,rotate=0] at (T20-B) {rhomb};
\pic (T131) [scale=1,rotate=90] at (T20-A) {rhomb};
\pic (T132) [scale=1,rotate=180] at (T131-C) {square};
\pic (T133) [scale=1,rotate=-45] at (T131-C) {square};
\pic (T134) [scale=1,rotate=45] at (T129-A) {rhomb};
\pic (T135) [scale=1,rotate=90] at (T129-A) {rhomb};
\pic (T136) [scale=1,rotate=135] at (T132-A) {rhomb};
\pic (T137) [scale=1,rotate=-135] at (T136-C) {square};
\pic (T138) [scale=1,rotate=180] at (T137-A) {rhomb};
\pic (T139) [scale=1,rotate=-90] at (T138-C) {square};
\pic (T140) [scale=1,rotate=45] at (T66-C) {rhomb};
\pic (T141) [scale=1,rotate=-45] at (T140-A) {square};
\pic (T142) [scale=1,rotate=-45] at (T139-D) {rhomb};
\pic (T143) [scale=1,rotate=90] at (T120-A) {rhomb};
\pic (T144) [scale=1,rotate=0] at (T120-B) {rhomb};
\pic (T145) [scale=1,rotate=-45] at (T143-C) {square};
\pic (T146) [scale=1,rotate=45] at (T145-A) {square};
\pic (T147) [scale=1,rotate=90] at (T140-A) {square};
\pic (T148) [scale=1,rotate=135] at (T147-D) {rhomb};
\pic (T149) [scale=1,rotate=135] at (T139-A) {square};
\pic (T150) [scale=1,rotate=180] at (T145-A) {square};
\pic (T151) [scale=1,rotate=135] at (T120-A) {square};
\pic (T152) [scale=1,rotate=180] at (T151-D) {rhomb};
\pic (T153) [scale=1,rotate=180] at (T123-A) {rhomb};
\pic (T154) [scale=1,rotate=45] at (T153-C) {square};
\pic (T155) [scale=1,rotate=45] at (T154-B) {rhomb};
\pic (T156) [scale=1,rotate=-90] at (T154-A) {square};
\pic (T157) [scale=1,rotate=-135] at (T153-A) {square};
\pic (T158) [scale=1,rotate=-90] at (T156-B) {rhomb};
\pic (T159) [scale=1,rotate=135] at (T156-A) {square};
\pic (T160) [scale=1,rotate=-135] at (T156-A) {rhomb};
\pic (T161) [scale=1,rotate=90] at (T160-C) {square};
\pic (T162) [scale=1,rotate=135] at (T159-B) {rhomb};
\pic (T163) [scale=1,rotate=-45] at (T161-A) {square};
\pic (T164) [scale=1,rotate=-45] at (T163-B) {rhomb};
\pic (T165) [scale=1,rotate=0] at (T163-D) {rhomb};
\pic (T166) [scale=1,rotate=-90] at (T163-A) {rhomb};
\pic (T167) [scale=1,rotate=180] at (T163-A) {square};
\pic (T168) [scale=1,rotate=0] at (T166-C) {square};
\pic (T169) [scale=1,rotate=135] at (T168-A) {square};
\pic (T170) [scale=1,rotate=180] at (T169-D) {rhomb};
\pic (T171) [scale=1,rotate=180] at (T115-A) {square};
\pic (T172) [scale=1,rotate=45] at (T158-C) {rhomb};
\pic (T173) [scale=1,rotate=0] at (T172-A) {rhomb};
\pic (T174) [scale=1,rotate=-90] at (T114-C) {rhomb};
\pic (T175) [scale=1,rotate=135] at (T174-C) {square};
\pic (T176) [scale=1,rotate=0] at (T175-A) {square};
\pic (T177) [scale=1,rotate=-90] at (T175-A) {square};
\pic (T178) [scale=1,rotate=-45] at (T164-C) {rhomb};
\pic (T179) [scale=1,rotate=-90] at (T164-C) {rhomb};
\pic (T180) [scale=1,rotate=-135] at (T164-C) {rhomb};
\pic (T181) [scale=1,rotate=-135] at (T176-A) {rhomb};
\pic (T182) [scale=1,rotate=90] at (T181-C) {square};
\pic (T183) [scale=1,rotate=-45] at (T182-A) {square};
\pic (T184) [scale=1,rotate=-45] at (T183-B) {rhomb};
\pic (T185) [scale=1,rotate=0] at (T184-C) {square};
\pic (T186) [scale=1,rotate=45] at (T185-D) {rhomb};
\pic (T187) [scale=1,rotate=-90] at (T186-C) {rhomb};
\pic (T188) [scale=1,rotate=0] at (T116-D) {rhomb};
\pic (T189) [scale=1,rotate=0] at (T187-C) {square};
\pic (T190) [scale=1,rotate=-45] at (T185-A) {rhomb};
\pic (T191) [scale=1,rotate=45] at (T190-C) {square};
\pic (T192) [scale=1,rotate=-45] at (T191-A) {square};
\pic (T193) [scale=1,rotate=-45] at (T189-A) {rhomb};
\pic (T194) [scale=1,rotate=45] at (T193-C) {square};
\pic (T195) [scale=1,rotate=45] at (T194-B) {rhomb};
\pic (T196) [scale=1,rotate=0] at (T194-A) {rhomb};
\pic (T197) [scale=1,rotate=90] at (T196-C) {square};
\pic (T198) [scale=1,rotate=90] at (T197-B) {rhomb};
\pic (T199) [scale=1,rotate=-45] at (T198-C) {rhomb};
\pic (T200) [scale=1,rotate=0] at (T199-A) {rhomb};
\pic (T201) [scale=1,rotate=45] at (T200-A) {rhomb};
\pic (T202) [scale=1,rotate=45] at (T197-A) {rhomb};
\pic (T203) [scale=1,rotate=135] at (T202-C) {square};
\pic (T204) [scale=1,rotate=90] at (T203-A) {rhomb};
\pic (T205) [scale=1,rotate=-45] at (T204-C) {rhomb};
\pic (T206) [scale=1,rotate=180] at (T204-C) {square};
\pic (T207) [scale=1,rotate=-45] at (T56-A) {rhomb};
\pic (T208) [scale=1,rotate=45] at (T206-A) {square};
\pic (T209) [scale=1,rotate=45] at (T51-D) {rhomb};
\pic (T210) [scale=1,rotate=0] at (T202-A) {rhomb};
\pic (T211) [scale=1,rotate=-90] at (T38-A) {rhomb};
\pic (T212) [scale=1,rotate=135] at (T211-C) {square};
\pic (T213) [scale=1,rotate=45] at (T41-C) {rhomb};
\pic (T214) [scale=1,rotate=0] at (T41-C) {rhomb};
\pic (T215) [scale=1,rotate=-45] at (T41-C) {rhomb};
\pic (T216) [scale=1,rotate=-90] at (T41-C) {rhomb};
\pic (T217) [scale=1,rotate=0] at (T208-A) {rhomb};
\pic (T218) [scale=1,rotate=90] at (T217-C) {square};
\pic (T219) [scale=1,rotate=-90] at (T205-D) {square2};
\pic (T220) [scale=1,rotate=180] at (T219-C) {square};

\draw[draw=yellow,opacity=0.5,line width=6mm] (T21-C)--(T14-A)--(T8-C)--(T15-A)--(T21-C);
\draw[draw=yellow,opacity=0.5,line width=6mm] (T23-A)--(T24-C)--(T26-C)--(T10-A);
\draw[draw=yellow,opacity=0.5,line width=6mm] (T31-A)--(T33-A)--(T36-A)--(T10-A);
\draw[draw=yellow,opacity=0.5,line width=6mm] (T34-C)--(T41-C)--(T34-A)--(T11-A);
\draw[draw=yellow,opacity=0.5,line width=6mm] (T13-A)--(T72-A)--(T3-A);
\draw[draw=yellow,opacity=0.5,line width=6mm] (T46-C)--(T51-A)--(T75-A)--(T13-A);
\draw[draw=yellow,opacity=0.5,line width=6mm] (T77-A)--(T78-C)--(T2-A);
\draw[draw=yellow,opacity=0.5,line width=6mm] (T80-C)--(T70-A)--(T16-A);
\draw[draw=yellow,opacity=0.5,line width=6mm] (T81-A)--(T20-A)--(T69-A);
\draw[draw=yellow,opacity=0.5,line width=6mm] (T43-A)--(T84-A)--(T86-C);
\draw[draw=yellow,opacity=0.5,line width=6mm] (T87-A)--(T54-A)--(T88-C)--(T89-C);
\draw[draw=yellow,opacity=0.5,line width=6mm] (T98-A)--(T99-C)--(T55-A);
\draw[draw=yellow,opacity=0.5,line width=6mm] (T103-C)--(T97-A)--(T61-A)--(T104-A);
\draw[draw=yellow,opacity=0.5,line width=6mm] (T90-A)--(T115-A)--(T96-C);
\draw[draw=yellow,opacity=0.5,line width=6mm] (T56-A)--(T117-C)--(T116-A);
\draw[draw=yellow,opacity=0.5,line width=6mm] (T62-A)--(T120-A)--(T123-A)--(T91-A);
\draw[draw=yellow,opacity=0.5,line width=6mm] (T42-A)--(T128-A)--(T63-A);
\draw[draw=yellow,opacity=0.5,line width=6mm] (T129-A)--(T131-C)--(T130-C);
\draw[draw=yellow,opacity=0.5,line width=6mm] (T125-C)--(T141-A)--(T146-A)--(T64-A);
\draw[draw=yellow,opacity=0.5,line width=6mm] (T147-A)--(T148-C)--(T149-A)--(T142-C);
\draw[draw=yellow,opacity=0.5,line width=6mm] (T150-A)--(T152-C)--(T151-A);
\draw[draw=yellow,opacity=0.5,line width=6mm] (T110-A)--(T154-A)--(T158-C)--(T157-A);
\draw[draw=yellow,opacity=0.5,line width=6mm] (T159-A)--(T162-C)--(T161-A)--(T164-C);
\draw[draw=yellow,opacity=0.5,line width=6mm] (T167-A)--(T170-C)--(T169-A)--(T165-C);
\draw[draw=yellow,opacity=0.5,line width=6mm] (T171-A)--(T172-A);
\draw[draw=yellow,opacity=0.5,line width=6mm] (T173-A)--(T175-A)--(T119-C);
\draw[draw=yellow,opacity=0.5,line width=6mm] (T180-A)--(T180-B);
\draw[draw=yellow,opacity=0.5,line width=6mm] (T178-A)--(T183-A)--(T184-C)--(T177-A);
\draw[draw=yellow,opacity=0.5,line width=6mm] (T67-A)--(T185-A);
\draw[draw=yellow,opacity=0.5,line width=6mm] (T186-C)--(T192-A)--(T193-C)--(T195-C);
\draw[draw=yellow,opacity=0.5,line width=6mm] (T209-C)--(T208-A)--(T201-A);
\draw[draw=yellow,opacity=0.5,line width=6mm] (T204-C)--(T219-C)--(T202-A)--(T199-A);

\foreach \i/\j in {T6/-b,T6/-c,T6/-a,T25/-d,T9/-b,T36/-a,T33/-a,T23/-d,T34/-c,T34/-b,T34/-d,T3/-d,T71/-d,T13/-a,T51/-a,T74/-d,T73/-c,T73/-b,T73/-d,T16/-c,T70/-b,T70/-d,T20/-b,T68/-d,T69/-d,T19/-b,T84/-d,T85/-a,T88/-c,T88/-b,T99/-c,T99/-b,T102/-d,T102/-c,T97/-a,T114/-c,T96/-d,T100/-d,T60/-b,T120/-d,T123/-d,T63/-d,T87/-d,T131/-b,T131/-c,T146/-d,T141/-d,T140/-a,T140/-b,T140/-d,T143/-c,T143/-b,T143/-a,T153/-c,T153/-b,T153/-a,T159/-d,T159/-a,T160/-c,T160/-b,T167/-d,T167/-a,T166/-c,T166/-b,T122/-c,T122/-b,T122/-a,T174/-c,T174/-b,T181/-c,T181/-b,T176/-d,T185/-a,T191/-d,T194/-a,T207/-b,T206/-a,T219/-d,T220/-d}
{
\node[shape=circle,fill=white,text=black,font={\Huge}] at (\i\j) {$1$};
}
\foreach \i/\j in {T22/-a,T22/-d,T29/-a,T29/-d,T1/-a,T1/-d,T48/-a,T48/-d,T18/-a,T18/-d,T94/-a,T94/-d,T58/-d,T58/-a,T108/-a,T108/-d,T45/-a,T45/-d,T65/-a,T65/-d,T112/-a,T112/-d,T189/-a,T189/-d,T203/-a,T203/-d}
{
\node[shape=circle,fill=white,text=black,font={\Huge}] at (\i\j) {$2$};
}

\foreach \i/\j in {T26/-D,T29/-C,T4/-D,T48/-C,T18/-C,T94/-C,T58/-C,T108/-C,T45/-C,T65/-C,T112/-C,T189/-C,T203/-C}
{
\node[draw=black,shape=circle,fill=white,text=black,font={\Huge}] at (\i\j) {$3$};
}
\foreach \i/\j in {T6/-B,T6/-D,T34/-B,T34/-D,T78/-A,T73/-B,T70/-B,T70/-D,T83/-D,T83/-B,T88/-D,T88/-B,T99/-D,T99/-B,T102/-B,T102/-D,T131/-B,T131/-D,T140/-B,T140/-D,T143/-B,T143/-D,T153/-D,T153/-B,T160/-D,T160/-B,T166/-D,T166/-B,T122/-D,T122/-B,T174/-B,T174/-D,T181/-B,T181/-D,T207/-D,T207/-B}
{
\node[draw=black,shape=circle,fill=white,text=black,font={\Huge}] at (\i\j) {$4$};
}
\foreach \i/\j in {T25/-D,T22/-D,T10/-D,T33/-B,T1/-B,T1/-D,T48/-B,T48/-D,T18/-B,T18/-D,T94/-B,T94/-D,T58/-B,T58/-D,T108/-B,T108/-D,T45/-B,T45/-D,T65/-B,T65/-D,T112/-B,T112/-D,T189/-B,T189/-D,T203/-D,T203/-B}
{
\node[draw=black,shape=circle,fill=white,text=black,font={\Huge}] at (\i\j) {$5$};
}
\foreach \i/\j in {T22/-A,T29/-A,T1/-A,T48/-A,T18/-A,T94/-A,T58/-A,T108/-A,T45/-A,T65/-A,T112/-A,T189/-A,T203/-A}
{
\node[draw=black,shape=circle,fill=white,text=black,font={\Huge}] at (\i\j) {$7$};
}

\foreach \i/\j in {T21/-B,T7/-C,T3/-C,T15/-C,T23/-C,T25/-C,T10/-C,T11/-D,T11/-B,T39/-C,T40/-C,T2/-B,T72/-C,T52/-B,T75/-C,T51/-C,T76/-C,T77/-C,T68/-A,T79/-C,T20/-C,T69/-C,T84/-C,T85/-C,T87/-C,T91/-C,T90/-C,
T101/-C,T100/-C,T97/-C,T105/-C,T104/-C,T115/-C,T114/-A,T56/-C,T116/-C,T120/-C,T123/-C,T124/-C,T42/-C,T63/-C,T133/-C,T132/-C,T141/-C,T146/-C,T145/-C,T139/-C,T149/-C,T147/-C,T150/-C,T151/-C,T154/-C,T156/-C,T157/-C,T159/-C,T161/-C,T163/-C,T167/-C,T169/-C,T168/-C,T171/-C,T175/-C,T176/-C,T182/-C,T183/-C,T177/-C,T185/-C,T191/-C,T192/-C,T194/-C,T206/-C,T208/-C,T219/-A,T220/-C,T197/-C}
{
\node[draw=red,shape=circle,fill=red!50,text=black,font={\Huge}] at (\i\j) {$3$};
}
\foreach \i/\j in {T14/-A,T36/-A,T73/-A,T73/-C,T68/-C,T88/-A,T101/-A,T123/-A,T146/-A,T156/-A,T161/-A,T166/-C,T122/-A,T175/-A,T183/-A,T191/-A,T219/-C}
{
\node[draw=blue,shape=circle,fill=blue!10,text=black,font={\Huge}] at (\i\j) {$5$};
}
\foreach \i/\j in {T6/-A,T13/-A,T20/-A,T55/-A,T90/-A,T97/-A,T141/-A,T139/-A,T143/-A,T184/-C,T194/-A,T207/-C,T197/-A}
{
\node[draw=blue,shape=circle,fill=cyan!50,text=black,font={\Huge}] at (\i\j) {$6$};
}
\foreach \i/\j in {T9/-C,T54/-A,T106/-A}
{
\node[draw=blue,shape=circle,fill=blue!40,text=black,font={\Huge}] at (\i\j) {$7$};
}
\foreach \i/\j in {T30/-C,T98/-A,T125/-C,T172/-A,T188/-C,T209/-C}
{
\node[draw=blue,shape=circle,fill=cyan!30!gray,text=black,font={\Huge}] at (\i\j) {$8$};
}
}
}
\end{center}

\subsection{Penrose Rhomb}

\begin{proposition}
    Consider the non-periodic tiling of the plane with the \href{https://tilings.math.uni-bielefeld.de/substitution/penrose-rhomb/}{Penrose rhomb} in \cite{TilingEncyclopedia}. Each rhombus prototile has two sides identified with double arrows and two sides with single arrows. Assign the edges with double arrows a weight of 1 and the edges with single arrows a weight of 3. This provides a solution to the 1-2-3 problem.
\end{proposition}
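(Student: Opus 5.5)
The sum at a vertex $v$ equals $d_1(v)+3d_3(v)$, where $d_1(v)$ and $d_3(v)$ count the incident double-arrow and single-arrow edges. Since edge markings must match across shared edges, each edge of the tiling carries a well-defined type, so the weighting is well defined. The plan is to use the known finite list of vertex configurations (vertex stars) of the Penrose rhomb tiling with arrow decorations. There are exactly eight: in de Bruijn/Conway notation, the sun, star, ace, deuce, jack, queen, king, and the ``dart''-type corner. I would compute $(d_1,d_3)$ and the sum for each, and then show that adjacent vertices never receive equal sums.

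The first step is a local rule relating vertices to arrows. In the standard Penrose rhomb decoration, every double arrow on an edge points from one type of vertex to another, and the same holds for single arrows. Vertices therefore split naturally into two classes according to the arrow matching rules: the classical two-colouring of Penrose rhomb vertices (often drawn black and white) is such that each edge joins a black vertex to a white vertex. This is the bipartite structure, since every rhombus has its four corners alternately coloured, as encoded by the arrows.

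Next I would tabulate the eight stars. For instance, the sun has five single-arrow edges meeting at the centre, giving sum $15$. The star has five double-arrow edges, giving sum $5$. The others mix the two types, giving sums such as $1\cdot 2+3\cdot 1=5$ for a degree-3 vertex, and so on. The key claim to verify is that the set of sums attainable at black vertices is disjoint from the set attainable at white vertices. Failing that, I would verify the weaker statement that the two ends of any edge lie in different stars with distinct sums, by checking each edge type together with the finitely many pairs of stars that can sit at its two ends.

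I expect this final disjointness check to be the main obstacle. Degrees range from $3$ to $7$, and the sums $d_1+3d_3$ can coincide between, for example, a degree-3 vertex and a degree-5 vertex (both can give $5$ or $7$). So a pure bipartite-class argument may fail. In that case I would fall back on the collar argument used for the chair: enumerate, for each of the two edge types, the admissible pairs of endpoint stars (a finite list obtainable from the level-1 collars, i.e.\ the vertex-atlas adjacency), and confirm that the sums differ in every pair. I would also use the observation that a double-arrow edge joins a vertex where that arrow points in to one where it points out, which constrains which stars can be adjacent.
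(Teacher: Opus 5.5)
Your proposal never carries out the one computation the argument rests on, and what it predicts about that computation is wrong. With double arrows weighted $1$ and single arrows weighted $3$, the eight vertex stars give these sums:
$3$ (degree $3$, three double arrows in);
$4$ (degree $4$, all double in);
$5$ (five fat rhombs, all double in);
$7$ (degree $3$: two single in, one double out);
$11$ (degree $5$: two double out, two single out, one single in);
$15$ (five fat rhombs, all single out);
$16$ (degree $6$);
$17$ (degree $7$).

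No degree-$3$ star has $(d_1,d_3)=(2,1)$, so your sample value $1\cdot 2+3\cdot 1=5$ never occurs. The collision between degree-$3$ and degree-$5$ stars that you identify as the main obstacle does not exist: the eight sums are pairwise distinct. (Incidentally, sun, star, ace, deuce, jack, queen, king are the seven kite--dart stars, not the rhomb stars.)

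Your first route, via the bipartition, does fail, but for a different reason. The graph is bipartite simply because every face is a rhombus, not because the arrows encode anything, and the colour class is not a function of the star. A sum-$16$ vertex has both a sum-$7$ neighbour and a sum-$11$ neighbour, and every sum-$11$ vertex has a sum-$7$ neighbour. So sum-$7$ stars occur in both colour classes, and the disjointness you intend to verify is false.

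What is actually needed is short once the sums are known. Equal sums at the two ends of an edge would force the same star at both ends, so it suffices to show that no star is adjacent to a copy of itself. Your closing remark about arrow orientation is exactly the right tool, and this is how the paper argues:
the three all-in stars (sums $3,4,5$) and the three all-out stars (sums $15,16,17$) cannot be self-adjacent, since a neighbour sees the shared edge with the opposite orientation;
the sum-$7$ star cannot either, since it has no single arrow out and no double arrow in.

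Only the sum-$11$ star needs a look at the far endpoints of its edges.
Its double-out edges end at vertices with a double arrow in, and it has none.
Its two single-out edges end at the obtuse single-arrow corner of a thin rhomb, which already has two single arrows in, and it has only one.
Its single-in edge starts at a vertex where two fat rhombs meet at their single-arrow acute corners, which has at least three single arrows out, and it has only two.

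Without the table of sums and this check, your proposal is an outline whose only concrete predictions point the wrong way. Your fallback of enumerating all admissible pairs of endpoint stars would also work, but the distinctness of the sums makes that enumeration unnecessary.
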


\begin{proof}
    It is known from \cite{Penrose} that there are only eight possible ways for the tiles to meet at a vertex. By assigning a weight of 1 to all double arrows and a weight of 3 to all single arrows we can see from the image above that each vertex configuration has a different sum.

\tikzset{proto1a/.pic={
    \draw[line width=1mm] (0,0)--(-2.351141009,3.2360679775)--(0,6.472133955)--(2.351141009,3.2360679775)--(0,0);
    \draw[line width=3,->] (0,0)--(-1.4694631307,2.0225424859);
    \draw[line width=3,->] (0,0)--(1.4694631307,2.0225424859);
    \draw[line width=3,->>] (-2.351141009,3.2360679775)--(-0.76412082781,5.4204138623);
    \draw[line width=3,->>] (2.351141009,3.2360679775)--(0.76412082781,5.4204138623);
    
    \foreach \i/\j in {-0.881677878/1.21352549,0.881677878/1.21352549}
    {
    \draw (\i,\j) node[shape=circle,fill=white,font={\Huge}]{\color{blue}$3$};
    }
    \foreach \i/\j in {-1.58702018/4.28779007,1.58702018/4.28779007}
    {
    \draw (\i,\j) node[shape=circle,fill=white,font={\Huge}]{\color{blue}$1$};
    }

    \coordinate (-A) at (0,0);
        \coordinate (-B) at (-2.351141009,3.2360679775);
        \coordinate (-C) at (0,6.472133955);
        \coordinate (-D) at (2.351141009,3.2360679775);
    
}
}

\tikzset{proto1b/.pic={
    \draw[line width=1mm] (0,0)--(-2.351141009,3.2360679775)--(0,6.472133955)--(2.351141009,3.2360679775)--(0,0);
    \draw[line width=3,->] (0,6.472135955)--(-1.4694631307,4.4495934691);
    \draw[line width=3,->] (0,6.472135955)--(1.4694631307,4.4495934691);
    \draw[line width=3,->>] (-2.351141009,3.2360679775)--(-0.76412082781,1.0517220927);
    \draw[line width=3,->>] (2.351141009,3.2360679775)--(0.76412082781,1.0517220927);
    
    \foreach \i/\j in {-0.881677878/5.258610465,0.881677878/5.258610465}
    {
    \draw (\i,\j) node[shape=circle,fill=white,font={\Huge}]{\color{blue}$3$};
    }
    \foreach \i/\j in {-1.58702018/2.184345885,1.58702018/2.184345885}
    {
    \draw (\i,\j) node[shape=circle,fill=white,font={\Huge}]{\color{blue}$1$};
    }

    \coordinate (-A) at (0,0);
        \coordinate (-B) at (-2.351141009,3.2360679775);
        \coordinate (-C) at (0,6.472133955);
        \coordinate (-D) at (2.351141009,3.2360679775);
    
}
}

\tikzset{proto2a/.pic={
    \draw[line width=1mm] (0,0)--(4,0)--(7.2360679775,-2.351141009)--(3.2360679775,-2.351141009)--(0,0);
    \draw[line width=3,->] (0,0)--(2.5,0);
    \draw[line width=3,->] (7.2360679775,-2.351141009)--(5.21352549156,-0.881677878);
    \draw[line width=3,->>] (7.2360679775,-2.351141009)--(4.5360679775,-2.351141009);
    \draw[line width=3,->>] (0,0)--(2.1843458848,-1.5870201811896775);
    
    \foreach \i/\j in {1.5/0,6.0225424859/-1.46946313056}
    {
    \draw (\i,\j) node[shape=circle,fill=white,font={\Huge}]{\color{blue}$3$};
    }
    \foreach \i/\j in {5.9360679775/-2.351141009,1.051722092687/-0.76412082798}
    {
    \draw (\i,\j) node[shape=circle,fill=white,font={\Huge}]{\color{blue}$1$};
    }

    \coordinate (-A) at (0,0);
        \coordinate (-B) at (4,0);
        \coordinate (-C) at (7.2360679775,-2.351141009);
        \coordinate (-D) at (3.2360679775,-2.351141009);
    
}
}

\tikzset{proto2b/.pic={
    \draw[line width=1mm] (0,0)--(4,0)--(7.2360679775,-2.351141009)--(3.2360679775,-2.351141009)--(0,0);
    \draw[line width=3,->>] (0,0)--(2.7,0);
    \draw[line width=3,->>] (7.2360679775,-2.351141009)--(5.0517220926876,-0.76412082781);
    \draw[line width=3,->] (7.2360679775,-2.351141009)--(4.7360679775,-2.351141009);
    \draw[line width=3,->] (0,0)--(2.0225424859,-1.46946313073);
    
    \foreach \i/\j in {1.3/0,6.18434588481/-1.587020181019785}
    {
    \draw (\i,\j) node[shape=circle,fill=white,font={\Huge}]{\color{blue}$1$};
    }
    \foreach \i/\j in {5.7360679775/-2.351141009,1.21352549156/-0.8816778784387}
    {
    \draw (\i,\j) node[shape=circle,fill=white,font={\Huge}]{\color{blue}$3$};
    }

    \coordinate (-A) at (0,0);
        \coordinate (-B) at (4,0);
        \coordinate (-C) at (7.2360679775,-2.351141009);
        \coordinate (-D) at (3.2360679775,-2.351141009);
    
}
}

\begin{center}
\setlength{\tabcolsep}{2pt}
    \begin{longtable}[c]{c c c}
    \resizebox{65pt}{!}{
    \tikz{
    \pic (T1) [scale=1.2,rotate=0] {proto1a};
    \pic (T2) [scale=1.2,rotate=90] at (T1-B) {proto2a};
    \pic (T3) [xscale=-1.2,yscale=1.2,rotate=90] at (T1-D) {proto2a};

    \node[shape=circle, draw=red, fill=pink!70, text=black,font={\Huge}] at (T1-C) {3};
    \foreach \i/\j in {T1/-B,T1/-D,T2/-C}
    {
    \node[shape=circle, draw=black, fill=black,minimum size=7mm] at (\i\j) {};
    }
    }}

    & 
    \resizebox{140pt}{!}{
    \tikz{
    \pic (T1) [scale=1.2,rotate=0] {proto1b};
    \pic (T2) [scale=1.2,rotate=72] at (T1-A) {proto1b};
    \pic (T3) [scale=1.2,rotate=-72] at (T1-A) {proto1b};
    \pic (T4) [xscale=1.2,yscale=-1.2,rotate=18] at (T2-B) {proto2a};

    \node[shape=circle, draw=red, fill=red!60, text=black,font={\Huge}] at (T1-A) {4};
    \foreach \i/\j in {T1/-B,T1/-D,T4/-A,T4/-C}
    {
    \node[shape=circle, draw=black, fill=black,minimum size=7mm] at (\i\j) {};
    }
    }}

    & 
    \resizebox{140pt}{!}{
    \tikz{
    \pic (T1) [scale=1.2,rotate=0] {proto1b};
    \pic (T2) [scale=1.2,rotate=72] at (T1-A) {proto1b};
    \pic (T3) [scale=1.2,rotate=-72] at (T1-A) {proto1b};
    \pic (T4) [scale=1.2,rotate=144] at (T1-A) {proto1b};
    \pic (T5) [scale=1.2,rotate=-144] at (T1-A) {proto1b};

    \node[shape=circle, draw=red, fill=purple!50, text=black,font={\Huge}] at (T1-A) {5};
    \foreach \i/\j in {T1/-B,T1/-D,T2/-B,T4/-B,T5/-B}
    {
    \node[shape=circle, draw=black, fill=black,minimum size=7mm] at (\i\j) {};
    }
    }}\\
    
    \tiny Vertex Configuration 1 & \tiny Vertex Configuration 2 & \tiny Vertex Configuration 3\\
    
    \resizebox{100pt}{!}{
    \tikz{
    \pic (T1) [xscale=1.2,yscale=-1.2,rotate=18] {proto2a};
    \pic (T2) [xscale=1.2,yscale=-1.2,rotate=-36] at (T1-A){proto1a};
    \pic (T2) [xscale=1.2,yscale=-1.2,rotate=36] at (T1-C){proto1a};
    \node[shape=circle, draw=blue, fill=cyan!20, text=black,font={\Huge}] at (T1-B) {7};
    \foreach \i/\j in {T1/-A,T1/-C,T2/-C}
    {
    \node[shape=circle, draw=black, fill=black,minimum size=7mm] at (\i\j) {};}
    }
    }

     &
    \resizebox{140pt}{!}{
    \tikz{
    \pic (T1) [scale=1.2,rotate=0] {proto1a};
    \pic (T2) [scale=1.2,rotate=72] at (T1-A) {proto1a};
    \pic (T3) [scale=1.2,rotate=-72] at (T1-A) {proto1a};
    \pic (T4) [scale=1.2,rotate=144] at (T1-A) {proto1a};
    \pic (T5) [scale=1.2,rotate=-144] at (T1-A) {proto1a};

    \node[shape=circle, draw=blue, fill=cyan!80, text=black,font={\Huge}] at (T1-A) {15};
    \foreach \i/\j in {T1/-B,T1/-D,T2/-B,T4/-B,T5/-B}
    {
    \node[shape=circle, draw=black, fill=black,minimum size=7mm] at (\i\j) {};
    }
    }}
        & 
        \resizebox{140pt}{!}{
        \tikz{
        \pic (T1) [scale=1.2,rotate=72] {proto1a};
        \pic (T2) [scale=1.2,rotate=144] at (T1-A) {proto1a};
        \pic (T3) [scale=1.2,rotate=-144] at (T1-A) {proto1a};
        \pic (T4) [scale=1.2,rotate=-72] at (T1-A) {proto1a};
        \pic (T5) [scale=1.2,rotate=126] at (T1-A) {proto2a};
        \pic (T6) [xscale=-1.2,yscale=1.2,rotate=126] at (T1-A) {proto2a};

        \node[shape=circle, draw=blue, fill=blue!50, text=black,font={\Huge}] at (T1-A) {16};
    \foreach \i/\j in {T1/-B,T1/-D,T2/-B,T3/-B,T4/-B,T6/-D}
    {
    \node[shape=circle, draw=black, fill=black,minimum size=7mm] at (\i\j) {};}
        }}\\

    \tiny Vertex Configuration 4 & \tiny Vertex Configuration 5 & \tiny Vertex Configuration 6
    \end{longtable}
    \end{center}

    \begin{center}
\setlength{\tabcolsep}{2pt}
    \begin{tabular}{c c}
    \resizebox{175pt}{!}{
    \tikz{
    \pic (T1) [scale=1.2,rotate=0] {proto1a};
    \pic (T2) [scale=1.2,rotate=144] at (T1-A) {proto1a};
    \pic (T3) [scale=1.2,rotate=-144] at (T1-A) {proto1a};
    \pic (T4) [scale=1.2,rotate=162] at (T1-A) {proto2b};
    \pic (T5) [scale=1.2,rotate=198] at (T1-A) {proto2a};
    \pic (T6) [scale=1.2,rotate=54] at (T1-A) {proto2a};
    \pic (T7) [scale=1.2,rotate=18] at (T1-A) {proto2b};

    \node[shape=circle, draw=blue, fill=teal!70, text=black,font={\Huge}] at (T1-A) {17};
    \foreach \i/\j in {T1/-B,T1/-D,T2/-B,T4/-B,T5/-B,T7/-B,T7/-D}
    {
    \node[shape=circle, draw=black, fill=black,minimum size=7mm] at (\i\j) {};
    }
    }}

        & 
     \resizebox{145pt}{!}{
    \tikz{
    \pic (T1) [scale=1.2,rotate=0] {proto1a};
    \pic (T2) [scale=1.2,rotate=162] at (T1-A) {proto2b};
    \pic (T3) [scale=1.2,rotate=54] at (T1-A) {proto2a};
    \pic (T4) [scale=1.2,rotate=-144] at (T2-B) {proto1b};
    \pic (T5) [scale=1.2,rotate=144] at (T3-D) {proto1b};
    \node[shape=circle, draw=violet, fill=violet!40, text=black,font={\Huge}] at (T1-A) {11};
    \foreach \i/\j in {T1/-B,T1/-D,T2/-B,T4/-C,T5/-A}
    {
    \node[shape=circle, draw=black, fill=black,minimum size=7mm] at (\i\j) {};
    }
    }}\\

     \tiny Vertex Configuration 7 & \tiny Vertex Configuration 8
    \end{tabular}
\end{center}

    Next we must show that these vertex configurations are not adjacent to themselves.

     For configurations 1, 2, and 3 all arrows are double edge arrows pointing in. Thus, they cannot be adjacent to themselves, as there is no arrow pointing out.

     Configuration 4 is incident to single arrows pointing in and a double arrow pointing out and therefore, will only be adjacent to vertices with single arrows pointing out and a double arrow pointing in.

     Configurations 5, 6, and 7 are incident to only arrows pointing out and will thus only be adjacent to vertices with arrows pointing in.

     We will look at configuration 8 a little closer. This vertex is incident to two double arrows pointing out and cannot be adjacent to itself via those edges as those adjacent vertices have double arrows pointing in. This vertex configuration is also incident to three single arrows, two pointing out and one pointing in. When we follow the single arrows pointing out, the adjacent vertices are incident to two single arrows pointing in. Since configuration 8 only has one single arrow pointing in, it cannot be adjacent to itself via these edges. When we follow the single arrow pointing in, the adjacent vertex is incident to three single arrows pointing out. Since configuration 8 only has two single arrows pointing out it cannot be adjacent to itself via this edge either.
\end{proof}

\tikzset{proto1a/.pic={
    \draw[line width=1mm] (0,0)--(-2.351141009,3.2360679775)--(0,6.472133955)--(2.351141009,3.2360679775)--(0,0);
    \draw[line width=3,->] (0,0)--(-1.4694631307,2.0225424859);
    \draw[line width=3,->] (0,0)--(1.4694631307,2.0225424859);
    \draw[line width=3,->>] (-2.351141009,3.2360679775)--(-0.76412082781,5.4204138623);
    \draw[line width=3,->>] (2.351141009,3.2360679775)--(0.76412082781,5.4204138623);
    
    \foreach \i/\j in {-0.881677878/1.21352549,0.881677878/1.21352549}
    {
    \draw (\i,\j) node[shape=circle,fill=white,font={\Huge}]{\color{blue}$3$};
    }
    \foreach \i/\j in {-1.58702018/4.28779007,1.58702018/4.28779007}
    {
    \draw (\i,\j) node[shape=circle,fill=white,font={\Huge}]{\color{blue}$1$};
    }

    \coordinate (-A) at (0,0);
        \coordinate (-B) at (-2.351141009,3.2360679775);
        \coordinate (-C) at (0,6.472133955);
        \coordinate (-D) at (2.351141009,3.2360679775);
    
}
}

\tikzset{proto1b/.pic={
    \draw[line width=1mm] (0,0)--(-2.351141009,3.2360679775)--(0,6.472133955)--(2.351141009,3.2360679775)--(0,0);
    \draw[line width=3,->] (0,6.472135955)--(-1.4694631307,4.4495934691);
    \draw[line width=3,->] (0,6.472135955)--(1.4694631307,4.4495934691);
    \draw[line width=3,->>] (-2.351141009,3.2360679775)--(-0.76412082781,1.0517220927);
    \draw[line width=3,->>] (2.351141009,3.2360679775)--(0.76412082781,1.0517220927);
    
    \foreach \i/\j in {-0.881677878/5.258610465,0.881677878/5.258610465}
    {
    \draw (\i,\j) node[shape=circle,fill=white,font={\Huge}]{\color{blue}$3$};
    }
    \foreach \i/\j in {-1.58702018/2.184345885,1.58702018/2.184345885}
    {
    \draw (\i,\j) node[shape=circle,fill=white,font={\Huge}]{\color{blue}$1$};
    }

    \coordinate (-A) at (0,0);
        \coordinate (-B) at (-2.351141009,3.2360679775);
        \coordinate (-C) at (0,6.472133955);
        \coordinate (-D) at (2.351141009,3.2360679775);
    
}
}

\tikzset{proto2a/.pic={
    \draw[line width=1mm] (0,0)--(4,0)--(7.2360679775,-2.351141009)--(3.2360679775,-2.351141009)--(0,0);
    \draw[line width=3,->] (0,0)--(2.5,0);
    \draw[line width=3,->] (7.2360679775,-2.351141009)--(5.21352549156,-0.881677878);
    \draw[line width=3,->>] (7.2360679775,-2.351141009)--(4.5360679775,-2.351141009);
    \draw[line width=3,->>] (0,0)--(2.1843458848,-1.5870201811896775);
    
    \foreach \i/\j in {1.5/0,6.0225424859/-1.46946313056}
    {
    \draw (\i,\j) node[shape=circle,fill=white,font={\Huge}]{\color{blue}$3$};
    }
    \foreach \i/\j in {5.9360679775/-2.351141009,1.051722092687/-0.76412082798}
    {
    \draw (\i,\j) node[shape=circle,fill=white,font={\Huge}]{\color{blue}$1$};
    }

    \coordinate (-A) at (0,0);
        \coordinate (-B) at (4,0);
        \coordinate (-C) at (7.2360679775,-2.351141009);
        \coordinate (-D) at (3.2360679775,-2.351141009);
    
}
}

\tikzset{proto2b/.pic={
    \draw[line width=1mm] (0,0)--(4,0)--(7.2360679775,-2.351141009)--(3.2360679775,-2.351141009)--(0,0);
    \draw[line width=3,->>] (0,0)--(2.7,0);
    \draw[line width=3,->>] (7.2360679775,-2.351141009)--(5.0517220926876,-0.76412082781);
    \draw[line width=3,->] (7.2360679775,-2.351141009)--(4.7360679775,-2.351141009);
    \draw[line width=3,->] (0,0)--(2.0225424859,-1.46946313073);
    
    \foreach \i/\j in {1.3/0,6.18434588481/-1.587020181019785}
    {
    \draw (\i,\j) node[shape=circle,fill=white,font={\Huge}]{\color{blue}$1$};
    }
    \foreach \i/\j in {5.7360679775/-2.351141009,1.21352549156/-0.8816778784387}
    {
    \draw (\i,\j) node[shape=circle,fill=white,font={\Huge}]{\color{blue}$3$};
    }

    \coordinate (-A) at (0,0);
        \coordinate (-B) at (4,0);
        \coordinate (-C) at (7.2360679775,-2.351141009);
        \coordinate (-D) at (3.2360679775,-2.351141009);
    
}
}

\begin{center}
\resizebox{450pt}{!}{
    \tikz{
    \clip (-21.5,-21.5) rectangle (24,10);
    \pic (T1) [scale=1.2,rotate=0] {proto1b};
    \pic (T2) [scale=1.2,rotate=72] at (T1-A){proto1b};
    \pic (T3) [scale=1.2,rotate=-72] at (T1-A){proto1b};
    \pic (T4) [scale=1.2,rotate=144] at (T1-A){proto1b};
    \pic (T5) [scale=1.2,rotate=-144] at (T1-A){proto1b};
    \pic (T6) [scale=1.2,rotate=54] at (T2-C){proto2b};
    \pic (T7) [scale=1.2,rotate=-18] at (T1-C){proto2b};
    \pic (T8) [scale=1.2,rotate=126] at (T4-C){proto2b};
    \pic (T9) [scale=1.2,rotate=18] at (T4-C){proto2a};
    \pic (T10) [scale=1.2,rotate=90] at (T5-C){proto2a};
    \pic (T11) [scale=1.2,rotate=-36] at (T6-B){proto1b};
    \pic (T12) [scale=1.2,rotate=36] at (T7-B){proto1b};
    \pic (T13) [scale=1.2,rotate=-36] at (T7-B){proto1b};
    \pic (T14) [scale=1.2,rotate=-108] at (T7-B){proto1b};
    \pic (T15) [scale=1.2,rotate=-36] at (T10-D){proto1b};
    \pic (T16) [scale=1.2,rotate=-108] at (T10-D){proto1b};
    \pic (T17) [scale=1.2,rotate=180] at (T10-D){proto1b};
    \pic (T18) [scale=1.2,rotate=-108] at (T9-D){proto1b};
    \pic (T19) [scale=1.2,rotate=180] at (T9-D){proto1b};
    \pic (T20) [scale=1.2,rotate=108] at (T9-D){proto1b};
    \pic (T21) [scale=1.2,rotate=180] at (T8-B){proto1b};
    \pic (T22) [scale=1.2,rotate=108] at (T8-B){proto1b};
    \pic (T23) [scale=1.2,rotate=36] at (T8-B){proto1b};
    \pic (T24) [scale=1.2,rotate=108] at (T6-B){proto1b};
    \pic (T25) [scale=1.2,rotate=36] at (T6-B){proto1b};
    \pic (T26) [scale=1.2,rotate=18] at (T12-C){proto2b};
    \pic (T27) [scale=1.2,rotate=-54] at (T13-C){proto2b};
    \pic (T28) [scale=1.2,rotate=90] at (T14-C){proto2b};
    \pic (T29) [scale=1.2,rotate=-72] at (T28-A){proto1a};
    \pic (T30) [scale=1.2,rotate=-18] at (T28-A){proto2a};
    \pic (T31) [scale=1.2,rotate=-54] at (T28-A){proto2b};
    \pic (T32) [scale=1.2,rotate=-144] at (T30-D){proto1b};
    \pic (T33) [scale=1.2,rotate=234] at (T31-C){proto2b};
    \pic (T34) [scale=1.2,rotate=-72] at (T33-B){proto1b};
    \pic (T35) [scale=1.2,rotate=18] at (T17-C){proto2b};
    \pic (T36) [scale=1.2,rotate=-144] at (T17-C){proto1a};
    \pic (T37) [scale=1.2,rotate=-126] at (T17-C){proto2b};
    \pic (T38) [scale=1.2,rotate=-90] at (T17-C){proto2a};
    \pic (T39) [scale=1.2,rotate=144] at (T38-D){proto1b};
    \pic (T40) [scale=1.2,rotate=162] at (T19-C){proto2b};
    \pic (T41) [scale=1.2,rotate=-144] at (T40-B){proto1b};
    \pic (T42) [scale=1.2,rotate=-54] at (T21-C){proto2b};
    \pic (T43) [scale=1.2,rotate=144] at (T21-C){proto1a};
    \pic (T44) [scale=1.2,rotate=198] at (T21-C){proto2a};
    \pic (T45) [scale=1.2,rotate=162] at (T21-C){proto2b};
    \pic (T46) [scale=1.2,rotate=72] at (T45-B){proto1b};
    \pic (T47) [scale=1.2,rotate=-36] at (T46-C){proto1a};
    \pic (T48) [scale=1.2,rotate=90] at (T46-D){proto2b};
    \pic (T49) [scale=1.2,rotate=-126] at (T23-C){proto2b};
    \pic (T50) [scale=1.2,rotate=72] at (T23-C){proto1a};
    \pic (T51) [scale=1.2,rotate=90] at (T23-C){proto2b};
    \pic (T52) [xscale=-1.2,yscale=1.2,rotate=90] at (T23-C){proto2b};
    \pic (T53) [scale=1.2,rotate=0] at (T51-B){proto1b};
    \pic (T54) [scale=1.2,rotate=18] at (T53-D){proto2b};
    \pic (T55) [scale=1.2,rotate=0] at (T11-C){proto1a};
    \pic (T56) [scale=1.2,rotate=162] at (T11-C){proto2b};
    \pic (T57) [scale=1.2,rotate=54] at (T11-C){proto2a};
    \pic (T58) [scale=1.2,rotate=-72] at (T57-D){proto1b};
    \pic (T59) [scale=1.2,rotate=0] at (T27-B){proto1b};
    \pic (T60) [scale=1.2,rotate=-108] at (T59-C){proto1a};
    \pic (T61) [scale=1.2,rotate=18] at (T59-D){proto2b};
    \pic (T62) [scale=1.2,rotate=0] at (T61-B){proto1b};
    \pic (T63) [scale=1.2,rotate=-72] at (T61-B){proto1b};
    \pic (T64) [scale=1.2,rotate=180] at (T63-C){proto1a};
    \pic (T65) [scale=1.2,rotate=36] at (T29-C){proto1b};
    \pic (T66) [scale=1.2,rotate=-36] at (T29-C){proto1b};
    \pic (T67) [scale=1.2,rotate=-54] at (T66-C){proto2b};
    \pic (T68) [scale=1.2,rotate=-108] at (T29-C){proto1b};
    \pic (T69) [scale=1.2,rotate=180] at (T29-C){proto1b};
    \pic (T70) [scale=1.2,rotate=54] at (T30-C){proto2a};
    \pic (T71) [scale=1.2,rotate=-72] at (T70-D){proto1b};
    \pic (T72) [scale=1.2,rotate=-36] at (T32-C){proto1a};
    \pic (T73) [scale=1.2,rotate=-144] at (T70-D){proto1b};
    \pic (T74) [scale=1.2,rotate=18] at (T32-C){proto2a};
    \pic (T75) [scale=1.2,rotate=-18] at (T32-C){proto2b};
    \pic (T76) [scale=1.2,rotate=180] at (T32-C){proto1a};
    \pic (T77) [scale=1.2,rotate=-108] at (T75-B){proto1b};
    \pic (T78) [scale=1.2,rotate=144] at (T34-D){proto1a};
    \pic (T79) [scale=1.2,rotate=-54] at (T34-D){proto2b};
    \pic (T80) [scale=1.2,rotate=-72] at (T76-C){proto1b};
    \pic (T81) [scale=1.2,rotate=-144] at (T76-C){proto1b};
    \pic (T82) [scale=1.2,rotate=90] at (T81-C){proto2a};
    \pic (T83) [scale=1.2,rotate=54] at (T82-A){proto2b};
    \pic (T84) [scale=1.2,rotate=-108] at (T82-A){proto1a};
    \pic (T85) [xscale=1.2,yscale=-1.2,rotate=90] at (T82-A){proto2b};
    \pic (T86) [scale=1.2,rotate=-90] at (T82-A){proto2b};
    \pic (T87) [scale=1.2,rotate=108] at (T82-A){proto1a};
    \pic (T88) [scale=1.2,rotate=-108] at (T36-C){proto1b};
    \pic (T89) [scale=1.2,rotate=180] at (T36-C){proto1b};
    \pic (T90) [scale=1.2,rotate=54] at (T89-C){proto2a};
    \pic (T91) [scale=1.2,rotate=108] at (T36-C){proto1b};
    \pic (T92) [scale=1.2,rotate=-18] at (T39-B){proto2a};
    \pic (T93) [scale=1.2,rotate=-108] at (T41-C){proto1a};
    \pic (T94) [xscale=-1.2,yscale=1.2,rotate=-90] at (T41-C){proto2b};
    \pic (T95) [scale=1.2,rotate=270] at (T41-C){proto2b};
    \pic (T96) [scale=1.2,rotate=108] at (T41-C){proto1a};
    \pic (T97) [scale=1.2,rotate=234] at (T41-D){proto2b};
    \pic (T98) [scale=1.2,rotate=-108] at (T43-C){proto1b};
    \pic (T99) [scale=1.2,rotate=180] at (T43-C){proto1b};
    \pic (T100) [scale=1.2,rotate=36] at (T43-C){proto1b};
    \pic (T101) [scale=1.2,rotate=108] at (T43-C){proto1b};
    \pic (T102) [scale=1.2,rotate=-18] at (T101-C){proto2a};
    \pic (T103) [scale=1.2,rotate=72] at (T102-D){proto1b};
    \pic (T104) [xscale=-1.2,yscale=1.2,rotate=-90] at (T103-C){proto2b};
    \pic (T105) [scale=1.2,rotate=270] at (T103-C){proto2b};
    \pic (T106) [scale=1.2,rotate=-36] at (T103-C){proto1a};
    \pic (T107) [scale=1.2,rotate=126] at (T103-C){proto2b};
    \pic (T108) [scale=1.2,rotate=90] at (T103-D){proto2b};
    \pic (T109) [scale=1.2,rotate=0] at (T108-B){proto1b};
    \pic (T110) [scale=1.2,rotate=72] at (T108-B){proto1b};
    \pic (T111) [scale=1.2,rotate=234] at (T47-A){proto2a};
    \pic (T112) [scale=1.2,rotate=198] at (T47-A){proto2b};
    \pic (T113) [scale=1.2,rotate=36] at (T47-A){proto1a};
    \pic (T114) [scale=1.2,rotate=0] at (T47-B){proto1a};
    \pic (T115) [scale=1.2,rotate=162] at (T113-D){proto2b};
    \pic (T116) [scale=1.2,rotate=108] at (T50-C){proto1b};
    \pic (T117) [scale=1.2,rotate=36] at (T50-C){proto1b};
    \pic (T118) [scale=1.2,rotate=-36] at (T50-C){proto1b};
    \pic (T119) [scale=1.2,rotate=18] at (T117-C){proto2b};
    \pic (T120) [scale=1.2,rotate=-90] at (T117-C){proto2a};
    \pic (T121) [scale=1.2,rotate=0] at (T120-D){proto1b};
    \pic (T122) [scale=1.2,rotate=72] at (T119-B){proto1b};
    \pic (T123) [scale=1.2,rotate=0] at (T119-B){proto1b};
    \pic (T124) [scale=1.2,rotate=-72] at (T119-B){proto1b};
    \pic (T125) [scale=1.2,rotate=72] at (T54-B){proto1b};

    \foreach \i/\j in {T26/-B,T27/-B,T32/-A,T33/-B,T75/-B,T38/-D,T41/-A,T46/-A,T47/-C,T53/-A,T54/-B}
    {
    \node[shape=circle, draw=red, fill=pink!70, text=black,font={\Huge}] at (\i\j) {3};
    }
    \foreach \i/\j in {T6/-B,T7/-B,T10/-D,T8/-B,T9/-D,T61/-B,T70/-D,T76/-C,T106/-C,T119/-B}
    {
    \node[shape=circle, draw=red, fill=red!60, text=black,font={\Huge}] at (\i\j) {4};
    }
    \foreach \i/\j in {T1/-A,T29/-C,T36/-C,T98/-A,T50/-C}
    {
    \node[shape=circle, draw=red, fill=purple!50, text=black,font={\Huge}] at (\i\j) {5};
    }
    \foreach \i/\j in {T4/-B,T4/-D,T2/-D,T1/-D,T3/-D,T26/-D,T13/-D,T61/-D,T28/-D,T66/-D,T29/-D,T69/-B,T31/-D,T16/-D,T74/-B,T18/-D,T35/-D,T75/-D,T80/-D,T79/-D,T88/-D,T89/-D,T36/-D,T19/-D,T97/-D,T98/-B,T99/-D,T101/-D,T106/-B,T44/-B,T110/-D,T21/-D,T23/-B,T114/-B,T114/-D,T120/-B,T118/-B,T50/-D,T24/-D,T25/-D}
    {
    \node[shape=circle, draw=blue, fill=cyan!20, text=black,font={\Huge}] at (\i\j) {7};
    }
    \foreach \i/\j in {}
    {
    \node[shape=circle, draw=blue, fill=cyan!80, text=black,font={\Huge}] at (\i\j) {15};
    }
    \foreach \i/\j in {T32/-C,T39/-C,T109/-C}
    {
    \node[shape=circle, draw=blue, fill=blue!50, text=black,font={\Huge}] at (\i\j) {16};
    }
    \foreach \i/\j in {T12/-C,T14/-C,T18/-C,T81/-C,T20/-C,T23/-C}
    {
    \node[shape=circle, draw=blue, fill=teal!70, text=black,font={\Huge}] at (\i\j) {17};
    }
    \foreach \i/\j in {T1/-C,T2/-C,T4/-C,T5/-C,T3/-C,T26/-C,T59/-D,T61/-C,T69/-C,T16/-C,T35/-C,T81/-D,T91/-C,T39/-D,T41/-D,T101/-C,T108/-C,T22/-C,T47/-B,T117/-C,T52/-C,T51/-C}
    {
    \node[shape=circle, draw=violet, fill=violet!40, text=black,font={\Huge}] at (\i\j) {11};
    }
    }}
\end{center}

\subsection{Domino Variant}

\begin{proposition}
    Consider the non-periodic tiling of the plane with the \href{https://tilings.math.uni-bielefeld.de/substitution/domino-variant/}{domino variant} in \cite{TilingEncyclopedia}. This tiling is explored in \cite{DomVar} and \cite{DomVar2}. We identify the following supertile which is half of a level 2 supertile. Assign external edges a weight of 3, internal ''long" edges a weight of 1, and internal''short" edges a weight of 2. (Alternatively, colour the corners yellow, the middle external vertices blue, and the rest of the external vertices red. Assign weights of 3 to all external edges, weights of 1 to internal edges incident to a red vertex, and weighs of 2 to internal edges incident to a blue vertex. Then assign the remaining edges a weight of 2.) This provides a solution to the 1-2-3 problem.
\end{proposition}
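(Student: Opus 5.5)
The plan is to follow the template used for the Non-Pinwheel, Pinwheel and Half-Hex cases. The tiling is partitioned into copies of the chosen supertile (half of a level 2 supertile), and I then check separately the vertices interior to a supertile and the vertices on supertile boundaries.

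\textbf{Step 1: the partition.} I first verify that the domino variant can be partitioned into copies of this supertile. Every level 2 supertile splits into two such halves, and the tiling is a union of level 2 supertiles, so this holds. I also check that supertiles meet along whole boundary edges with vertices matching. This follows from the substitution, where supertile boundaries are unions of tile edges, possibly after inserting the degree 2 boundary vertices as graph vertices.

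\textbf{Step 2: interior vertices.} Within one supertile the labelling is fixed by the stated rule. I compute the sum at each interior vertex directly from the picture and confirm that adjacent interior vertices have distinct sums. Since external edges carry weight $3$ and internal edges weight at most $2$, these interior sums are small, say at most some bound $M$ that I read off from the picture.

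\textbf{Step 3: boundary vertices.} I use the alternative description with yellow corners, blue middle external vertices and red remaining external vertices. When supertiles are glued, each external vertex $v$ lies in some number $d$ of supertiles. Each supertile contributes two external edges of weight $3$, but an external edge is shared by two supertiles. It is cleaner to argue as in the Non-Pinwheel proof: group the edges at $v$ into $d$ external edges of weight $3$ and $d$ groups of internal edges.

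\begin{itemize}
\item Red vertices get internal weight $1$ per internal edge.
\item Blue vertices get internal weight $2$ per internal edge.
\item Yellow corners have no internal edges, or only the fixed ones shown.
\end{itemize}

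The key claim is that the colouring is consistent across supertiles: a vertex red in one supertile is never blue or yellow in a neighbouring one. I would check this by listing how two halves meet, both within a level 2 supertile and across level 2 boundaries, using the known vertex configurations of the domino variant from \cite{DomVar}. With consistency in hand, sums become $3d$ plus $d\cdot 1$, $d\cdot 2$ or the corner contribution. I then show that these three families are disjoint on the degrees that actually occur, and all exceed $M$ wherever external vertices are adjacent to interior ones.

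\textbf{Main obstacle.} The hard part is Step 3. I need the finite list of ways external vertices of these half-supertiles meet, and a proof that red, blue and yellow vertices are coloured consistently. I also need to confirm that adjacent external vertices along a supertile edge (corner to red, red to blue, and so on) never receive equal sums. I would attack this by enumerating the level 2 vertex stars from the substitution rule, in the spirit of the collar argument in Proposition~\ref{prop:chair-collared}. Finally, I would check the finitely many cases of adjacent boundary vertex sums by hand.
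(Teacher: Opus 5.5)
Your plan follows the paper's template. You cut the tiling into the $4\times 4$ halves of level-2 supertiles, read off the interior sums ($5$ at $(1,2),(2,1),(3,2),(2,3)$ and $8$ at the centre), and compute boundary sums from the yellow/blue/red colouring. But Step 3 fails as you have set it up, in two places.

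First, the count $3d+d\cdot 1$ at red vertices assumes that every supertile containing a red point contributes an internal edge there. In this supertile each side has two quarter points, and only one of them (the solid red one, e.g.\ $(1,4)$ on the top side) is the endpoint of an internal edge, of weight $1$. The other (the open red one, e.g.\ $(3,4)$) has no internal edge. A quarter point lies in exactly two supertiles, so its sum is $6+1=7$ or $6+2=8$, according to whether one or both supertiles have a solid red there. If neither does, it is not a vertex of the tiling at all: the boundary segments merge into one edge of weight $3$ from a blue to a yellow vertex.

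Second, your criterion that the external sums exceed the interior bound $M$ is false. The centre gives $M=8$, while red vertices have sums $7$ and $8$. You need an adjacency statement instead. The centre is adjacent only to the four interior vertices of sum $5$, and these four are the only interior vertices with external neighbours. Add three facts: red $\in\{7,8\}$, blue $=10$, yellow $=12$; no internal edge joins two boundary vertices; and along a side (yellow, possibly red, blue, possibly red, yellow) no two consecutive vertices share a colour. This finishes the proof. The paper's remark that external and internal sums are distinct must be read in this adjacency sense, since $8$ occurs on both sides.

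Also, the colour consistency you single out as the main obstacle needs no enumeration of level-2 vertex stars. The level-2 supertiles form a scaled domino tiling with all corners on a single translate of $4\ZZ^2$. Their halves are therefore exactly the cells of a square grid of side $4$. So each corner is shared by four squares ($d=4$, sum $12$). Each midpoint or quarter point of a side is shared by exactly two squares, in which it is again a midpoint or quarter point respectively ($d=2$, and the blue sum is $3+3+2+2=10$). The paper uses this implicitly when it says the corners are junctions of four supertiles and the blue vertices have degree $4$.
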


\tikzset{dominovar/.pic={
\draw[line width=2mm] (0,0)--(0,4)--(4,4)--(4,0)--(0,0);
\draw[line width=0.8mm] (0,2)--(4,2);
\draw[line width=0.8mm] (2,4)--(2,0);
\draw[line width=0.8mm] (1,4)--(1,2);
\draw[line width=0.8mm] (3,0)--(3,2);
\draw[line width=0.8mm] (0,1)--(2,1);
\draw[line width=0.8mm] (2,3)--(4,3);
\foreach \i/\j in {0/0.5,0/1.5,0/2.5,0/3.5,0.5/4,1.5/4,2.5/4,3.5/4,4/3.5,4/2.5,4/1.5,4/0.5,0.5/0,1.5/0,2.5/0,3.5/0}
{
\draw (\i,\j) node[shape=circle,fill=white,font={\Huge}] {\color{black}$3$};
}
\foreach \i/\j in {1/3,3/3,3/1,1/1}
{
\draw (\i,\j) node[shape=circle,fill=white,font={\Huge}] {\color{red}$1$};
}
\foreach \i/\j in {1.5/2,2.5/2,2/1.5,2/2.5}
{
\draw (\i,\j) node[shape=circle,fill=white,font={\Huge}] {\color{black}$2$};
}
\foreach \i/\j in {0.5/2,3.5/2,2/0.5,2/3.5}
{
\draw (\i,\j) node[shape=circle,fill=white,font={\Huge}] {\color{blue}$2$};
}
\foreach \i/\j in {2/3,3/2,2/1,1/2}
{
\node[shape=circle, draw=black, fill=white, text=black,font={\Huge}] at (\i,\j) {$5$};
}
\node[shape=circle, draw=black, fill=white, text=black,font={\Huge}] at (2,2) {$8$};
\foreach \i/\j in {0/0,0/4,4/4,4/0}
{\node[shape=circle, draw=yellow, fill=yellow, text=black,minimum size=7mm] at (\i,\j) {};}
\foreach \i/\j in {0/2,2/4,4/2,2/0}
{\node[shape=circle, draw=blue, fill=blue, text=black,minimum size=7mm] at (\i,\j) {};}
\foreach \i/\j in {1/4,4/3,3/0,0/1}
{\node[shape=circle, draw=red, fill=red, text=black,minimum size=7mm] at (\i,\j) {};}
\foreach \i/\j in {0/3,3/4,4/1,1/0}
{\node[shape=circle, draw=red, fill=white, text=black,minimum size=7mm] at (\i,\j) {};}
}}

\begin{center}
    \resizebox{100pt}{!}{
    \tikz{
    \pic[scale=3] {dominovar};
    }
    }
\end{center}

\begin{proof}
    As the supertiles are combined, the corners, coloured yellow, will be a junction of 4 supertiles and will sum to 12. The middle external vertices, coloured blue, will be degree 4 and sum up to 10. The solid red vertices will sum to either 7 or 8. The open red vertices will either not exist or sum up to 7. Each colour has a distinct sum and are never adjacent with themselves. Also, notice that the external sums are distinct from the internal sums. Thus no conflicts arise.
\end{proof}

\tikzset{dominovar/.pic={
\draw[line width=3mm] (0,0)--(0,4)--(4,4)--(4,0)--(0,0);
\draw[line width=0.8mm] (0,2)--(4,2);
\draw[line width=0.8mm] (2,4)--(2,0);
\draw[line width=0.8mm] (1,4)--(1,2);
\draw[line width=0.8mm] (3,0)--(3,2);
\draw[line width=0.8mm] (0,1)--(2,1);
\draw[line width=0.8mm] (2,3)--(4,3);
\foreach \i/\j in {0/0.5,0/1.5,0.5/4,1.5/4,4/3.5,4/2.5,2.5/0,3.5/0}
{
\draw (\i,\j) node[shape=circle,fill=white,font={\Huge}] {\color{black}$3$};
}
\foreach \i/\j in {1/3,3/3,3/1,1/1}
{
\draw (\i,\j) node[shape=circle,fill=white,font={\Huge}] {\color{red}$1$};
}
\foreach \i/\j in {1.5/2,2.5/2,2/1.5,2/2.5}
{
\draw (\i,\j) node[shape=circle,fill=white,font={\Huge}] {\color{black}$2$};
}
\foreach \i/\j in {0.5/2,3.5/2,2/0.5,2/3.5}
{
\draw (\i,\j) node[shape=circle,fill=white,font={\Huge}] {\color{blue}$2$};
}
\foreach \i/\j in {2/3,3/2,2/1,1/2}
{
\node[shape=circle, draw=black, fill=white, text=black,font={\Huge}] at (\i,\j) {$5$};
}
\node[shape=circle, draw=black, fill=white, text=black,font={\Huge}] at (2,2) {$8$};
\foreach \i/\j in {0/0,0/4,4/4,4/0}
{\node[shape=circle, draw=yellow, fill=yellow, text=black,font={\Huge}] at (\i,\j) {$12$};}
\foreach \i/\j in {0/2,2/4,4/2,2/0}
{\node[shape=circle, draw=blue, fill=blue!30, text=black,font={\Huge}] at (\i,\j) {$10$};}
\foreach \i/\j in {1/4,4/3,3/0,0/1}
{\node[shape=circle, draw=red, fill=red, text=black,minimum size=7mm] at (\i,\j) {};}

\coordinate (-A) at (0,0);
\coordinate (-B) at (0,4);
\coordinate (-C) at (4,4);
\coordinate (-D) at (4,0);
\coordinate (-a) at (0,2.5);
\coordinate (-b) at (0,3);
\coordinate (-c) at (0,3.5);
\coordinate (-d) at (2.5,4);
\coordinate (-e) at (3,4);
\coordinate (-f) at (3.5,4);
\coordinate (-g) at (4,1.5);
\coordinate (-h) at (4,1);
\coordinate (-i) at (4,0.5);
\coordinate (-j) at (0.5,0);
\coordinate (-k) at (1,0);
\coordinate (-l) at (1.5,0);
\coordinate (-aa) at (0,1);
\coordinate (-bb) at (1,4);
\coordinate (-cc) at (4,3);
\coordinate (-dd) at (3,0);
}}

\begin{center}
    \resizebox{400pt}{!}{
    \tikz{
    \clip (7,7) rectangle (53,40);
    \pic (T1) [scale=3] {dominovar};
    \pic (T2) [xscale=-3,yscale=3] at (T1-C) {dominovar};
    \pic (T3) [xscale=3,yscale=3] at (T2-C) {dominovar};
    \pic (T4) [xscale=-3,yscale=3] at (T3-C) {dominovar};
    \pic (T5) [xscale=-3,yscale=3] at (T4-B) {dominovar};
    \pic (T6) [xscale=3,yscale=3] at (T5-C) {dominovar};
    \pic (T7) [xscale=-3,yscale=3] at (T6-C) {dominovar};
    \pic (T8) [xscale=3,yscale=3] at (T7-C) {dominovar};
    \pic (T9) [xscale=-3,yscale=3,rotate=180] at (T1-C) {dominovar};
    \pic (T10) [xscale=3,yscale=3] at (T2-A) {dominovar};
    \pic (T11) [xscale=-3,yscale=3] at (T10-C) {dominovar};
    \pic (T12) [xscale=3,yscale=3] at (T4-A) {dominovar};
    \pic (T13) [xscale=3,yscale=3] at (T5-A) {dominovar};
    \pic (T14) [xscale=-3,yscale=3] at (T13-C) {dominovar};
    \pic (T15) [xscale=3,yscale=3] at (T7-A) {dominovar};
    \pic (T16) [xscale=-3,yscale=3] at (T15-C) {dominovar};
    \pic (T17) [xscale=-3,yscale=3,rotate=180] at (T9-D) {dominovar};
    \pic (T18) [xscale=3,yscale=3] at (T10-D) {dominovar};
    \pic (T19) [xscale=-3,yscale=3] at (T18-C) {dominovar};
    \pic (T20) [xscale=3,yscale=3] at (T12-D) {dominovar};
    \pic (T21) [xscale=3,yscale=3] at (T13-D) {dominovar};
    \pic (T22) [xscale=-3,yscale=3] at (T21-C) {dominovar};
    \pic (T23) [xscale=3,yscale=3] at (T15-D) {dominovar};
    \pic (T24) [xscale=-3,yscale=3] at (T23-C) {dominovar};
    \pic (T25) [xscale=3,yscale=3] at (T17-C) {dominovar};
    \pic (T26) [xscale=-3,yscale=3] at (T25-C) {dominovar};
    \pic (T27) [xscale=3,yscale=3] at (T19-A) {dominovar};
    \pic (T28) [xscale=-3,yscale=3] at (T27-C) {dominovar};
    \pic (T29) [xscale=-3,yscale=3] at (T28-B) {dominovar};
    \pic (T30) [xscale=3,yscale=3] at (T29-C) {dominovar};
    \pic (T31) [xscale=-3,yscale=3] at (T30-C) {dominovar};
    \pic (T32) [xscale=3,yscale=3] at (T31-C) {dominovar};
    \pic (T33) [xscale=3,yscale=3] at (T25-D) {dominovar};
    \pic (T34) [xscale=-3,yscale=3] at (T33-C) {dominovar};
    \pic (T35) [xscale=3,yscale=3] at (T34-C) {dominovar};
    \pic (T36) [xscale=-3,yscale=3] at (T35-C) {dominovar};
    \pic (T37) [xscale=-3,yscale=3] at (T36-B) {dominovar};
    \pic (T38) [xscale=3,yscale=3] at (T37-C) {dominovar};
    \pic (T39) [xscale=-3,yscale=3] at (T38-C) {dominovar};
    \pic (T40) [xscale=3,yscale=3] at (T39-C) {dominovar};
    \pic (T41) [xscale=-3,yscale=3,rotate=180] at (T33-C) {dominovar};
    \pic (T42) [xscale=3,yscale=3] at (T41-A) {dominovar};
    \pic (T43) [xscale=-3,yscale=3] at (T42-C) {dominovar};
    \pic (T44) [xscale=3,yscale=3] at (T43-C) {dominovar};
    \pic (T45) [xscale=3,yscale=3] at (T44-B) {dominovar};
    \pic (T46) [xscale=-3,yscale=3] at (T45-C) {dominovar};
    \pic (T47) [xscale=3,yscale=3] at (T46-C) {dominovar};
    \pic (T48) [xscale=-3,yscale=3] at (T47-C) {dominovar};
    \pic (T49) [xscale=3,yscale=3] at (T41-C) {dominovar};
    \pic (T50) [xscale=-3,yscale=3] at (T49-C) {dominovar};
    \pic (T51) [xscale=3,yscale=3] at (T50-C) {dominovar};
    \pic (T52) [xscale=-3,yscale=3] at (T51-C) {dominovar};
    \pic (T53) [xscale=-3,yscale=3] at (T52-B) {dominovar};
    \pic (T54) [xscale=3,yscale=3] at (T53-C) {dominovar};
    \pic (T55) [xscale=-3,yscale=3] at (T54-C) {dominovar};
    \pic (T56) [xscale=3,yscale=3] at (T55-C) {dominovar};

    \foreach \i/\j in {T1/-a,T1/-c,T1/-e,T1/-h,T1/-j,T1/-l,T2/-i,T2/-g,T2/-e,T2/-b,T3/-a,T3/-c,T3/-e,T3/-h,T4/-i,T4/-g,T5/-j,T5/-l,T4/-b,T5/-i,T5/-g,T5/-e,T5/-b,T6/-a,T6/-c,T6/-e,T6/-h,T7/-i,T7/-g,T7/-e,T7/-b,T8/-a,T8/-c,T8/-d,T8/-f,T8/-h,T9/-k,T9/-i,T9/-g,T9/-d,T9/-f,T10/-e,T10/-g,T10/-i,T11/-e,T11/-c,T11/-a,T13/-j,T13/-l,T12/-g,T12/-i,T13/-e,T13/-g,T13/-i,T14/-e,T14/-c,T14/-a,T15/-e,T15/-g,T15/-i,T16/-f,T16/-d,T16/-c,T16/-a,T17/-a,T17/-c,T17/-d,T17/-f,T17/-h,T17/-k,T18/-a,T18/-c,T18/-e,T18/-h,T19/-i,T19/-g,T19/-e,T19/-b,T20/-a,T20/-c,T21/-j,T21/-l,T20/-h,T21/-a,T21/-c,T21/-e,T21/-h,T22/-i,T22/-g,T22/-e,T22/-b,T23/-e,T23/-h,T24/-i,T24/-g,T24/-d,T24/-f,T24/-b,T25/-j,T25/-l,T25/-e,T25/-g,T25/-i,T26/-e,T26/-c,T26/-a,T27/-e,T27/-g,T27/-i,T29/-j,T29/-l,T28/-c,T28/-a,T29/-e,T29/-c,T29/-a,T30/-e,T30/-g,T30/-i,T31/-e,T31/-c,T31/-a,T32/-d,T32/-f,T32/-g,T32/-i,T33/-l,T33/-j,T33/-c,T33/-a,T33/-e,T33/-h,T34/-g,T34/-i,T34/-e,T34/-b,T35/-a,T35/-c,T35/-e,T35/-h,T36/-i,T36/-g,T37/-j,T37/-l,T36/-b,T37/-i,T37/-g,T37/-e,T37/-b,T38/-a,T38/-c,T38/-e,T38/-h,T39/-i,T39/-g,T39/-e,T39/-b,T40/-a,T40/-c,T40/-d,T40/-f,T40/-h,T41/-d,T41/-f,T41/-k,T41/-h,T42/-e,T42/-h,T43/-e,T43/-b,T45/-j,T45/-l,T44/-h,T45/-e,T45/-h,T46/-e,T46/-b,T47/-e,T47/-h,T48/-f,T48/-d,T48/-b,T49/-j,T49/-l,T49/-e,T49/-h,T50/-e,T50/-b,T51/-e,T51/-h,T53/-j,T53/-l,T52/-b,T53/-e,T53/-b,T54/-e,T54/-h,T55/-e,T55/-b,T56/-d,T56/-f,T56/-h}
    {
    \draw (\i\j) node[shape=circle,fill=white,font={\Huge}] {\color{black}$3$};
    }
    \foreach \i/\j in {T1/-b,T1/-aa,T1/-dd,T1/-k,T2/-h,T2/-cc,T3/-aa,T3/-b,T4/-h,T4/-cc,T4/-e,T4/-bb,T5/-h,T5/-cc,T6/-aa,T6/-b,T7/-h,T7/-cc,T8/-aa,T8/-b,T8/-bb,T8/-e,T9/-h,T9/-cc,T9/-e,T10/-cc,T10/-h,T11/-b,T11/-aa,T12/-bb,T12/-e,T12/-cc,T12/-h,T13/-cc,T13/-h,T14/-b,T14/-aa,T15/-cc,T15/-h,T16/-e,T16/-bb,T16/-b,T16/-aa,T17/-bb,T17/-e,T20/-bb,T20/-e,T24/-e,T24/-bb,T25/-k,T25/-dd,T25/-cc,T25/-h,T26/-b,T26/-aa,T27/-cc,T27/-h,T28/-e,T28/-bb,T28/-b,T28/-aa,T29/-b,T29/-aa,T30/-cc,T30/-h,T31/-b,T31/-aa,T32/-e,T32/-bb,T32/-h,T32/-cc,T33/-dd,T33/-k,T36/-e,T36/-bb,T40/-bb,T40/-e,T41/-bb,T41/-e,T44/-bb,T44/-e,T48/-e,T48/-bb,T49/-k,T49/-dd,T52/-e,T52/-bb,T56/-bb,T56/-e}
    {
    \node[shape=circle, draw=red, fill=red!60, text=black,font={\Huge}] at (\i\j) {$7$};
    }
    \foreach \i/\j in {T1/-bb,T1/-cc,T2/-bb,T2/-aa,T3/-bb,T3/-cc,T4/-aa,T5/-bb,T5/-aa,T6/-bb,T6/-cc,T7/-bb,T7/-aa,T8/-cc,T9/-dd,T10/-bb,T11/-bb,T13/-bb,T14/-bb,T15/-bb,T17/-cc,T17/-dd,T18/-bb,T18/-cc,T19/-bb,T19/-aa,T20/-cc,T21/-bb,T21/-cc,T22/-bb,T22/-aa,T23/-bb,T23/-cc,T24/-aa,T25/-bb,T26/-bb,T27/-bb,T29/-bb,T30/-bb,T31/-bb,T33/-bb,T33/-cc,T34/-bb,T34/-aa,T35/-bb,T35/-cc,T36/-aa,T37/-bb,T37/-aa,T38/-bb,T38/-cc,T39/-bb,T39/-aa,T40/-cc,T41/-dd,T41/-cc,T42/-bb,T42/-cc,T43/-bb,T43/-aa,T44/-cc,T45/-bb,T45/-cc,T46/-bb,T46/-aa,T47/-bb,T47/-cc,T48/-aa,T49/-bb,T49/-cc,T50/-bb,T50/-aa,T51/-bb,T51/-cc,T52/-aa,T53/-bb,T53/-aa,T54/-bb,T54/-cc,T55/-bb,T55/-aa,T56/-cc}
    {
    \node[shape=circle, draw=red, fill=red!60, text=black,font={\Huge}] at (\i\j) {$8$};
    }
    }
    }
\end{center}

\section*{Acknowledgements}
A.C was supported by an NSERC USRA, C.R. was supported by the NSERC Discovery grant 2019-05430, and N.S. was supported by the NSERC Discovery grant 2024-04853.


\bibliographystyle{amsplain}

\end{document}